\documentclass[11pt]{article}
\usepackage{amsmath,amsfonts,ntheorem}
\usepackage{txfonts,wasysym,lscape,amssymb,mathrsfs,extarrows}
\usepackage[all,pdf]{xy}
\usepackage{graphicx}
\usepackage{subfigure}
 \usepackage{multirow}
 \usepackage{rotating}
 \usepackage[table,xcdraw]{xcolor}
 \usepackage{indentfirst}
 \usepackage{algorithm}
 \usepackage{algorithmic}
 \usepackage{enumerate}
\RequirePackage{color,graphicx}

\newtheorem{theorem}{Theorem}[section]
\newtheorem{proposition}{Proposition}[section]
\newtheorem{lemma}{Lemma}[section]
\newtheorem{corollary}{Corollary}[section]
\newtheorem{definition}{Definition}[section]

\newtheorem{remark}{Remark}[section]

\title{Stability of Lagrangian Generalized Nash Equilibriums}

\author{
Lixin Tang\footnote{National Frontiers Science Center for Industrial Intelligence and Systems Optimization, Northeastern University, Shenyang 110819, P. R. China.} \footnote{Key Laboratory of Data Analytics and Optimization for Smart Industry (Northeastern University), Ministry of Education, Shenyang 110819, P. R. China. {\sl Email}: qhjytlx@mail.neu.edu.cn. This author was supported by the Major Program of National Natural Science Foundation of China (Nos. 72192830 and 72192831) and the 111 Project (B16009).} \quad and \quad
Liwei Zhang \footnote{National Frontiers Science Center for Industrial Intelligence and Systems Optimization, Northeastern University, Shenyang 110819, P. R. China.} \footnote{Key Laboratory of Data Analytics and Optimization for Smart Industry (Northeastern University), Ministry of Education, Shenyang 110819, P. R. China. {\sl Email}: zhanglw@mail.neu.edu.cn.
 This author was supported National Key R\&D Program of China under project No. 2022YFA1004000,  the Major Program of National Natural Science Foundation of China (Nos. 72192830 and 72192831), National Natural Science Foundation of China (No.12371298) and the 111 Project (B16009).}}

\date{}
\begin{document}

\maketitle

\begin{abstract}
Lagrangian generalized Nash equilibriums (LGNEs) were introduced by Rockafellar (2024) for a class of generalized Nash equilibrium problems (GNEPs) in which each player's strategy is subject to conic constraints. This paper investigates the stability properties of the LGNE solution set, specifically focusing on the Aubin property, isolated calmness, and Lipschitz continuous single-valued localization.
For general conically constrained GNEPs, characterizations of the Aubin property and isolated calmness of the LGNE solution mapping under canonical perturbations are established. These characterizations are formulated using the coderivative and graph derivative of normal cone mappings.
Subsequently, these general results are specialized to GNEPs with equality and inequality constraints, yielding explicit characterizations for both the Lipschitz continuous single-valued localization and isolated calmness of the corresponding LGNE solution mapping, which are described by nonsingularity of linear complementarity sytems.
For GNEPs with shared conic constraints, the Aubin property and isolated calmness of the consensus LGNE solution mapping--where identical Lagrange multipliers are assigned to the shared constraint--are first characterized. We further analyze the case when the conic constraints are specialized as equalities and inequalities.
Finally, for classical conically constrained Nash equilibrium problems, the Aubin property and isolated calmness of the Lagrangian Nash equilibrium solution mapping are also analyzed.

\vskip 6 true pt \noindent \textbf{Key words}:
Generalized Nash equilibrium problems, Lagrangian generalized Nash equilibriums,consensus Lagrangian generalized Nash equilibriums, Aubin property, isolated calmness, Lipschitz continuous single-valued localization.

\vskip 12 true pt \noindent \textbf{AMS subject classification}:
90C30, 90C31, 49J53
\end{abstract}
\bigskip\noindent

\section{Introduction}\label{Sec1}
 \setcounter{equation}{0}
Different from standard  Nash equilibrium (see Nash\cite{Nash51}) problems, generalized Nash equilibrium problems (GNEPs) allow a dependence of the strategy set of one player from
the decisions of the other players, see Arrow and  Debreu \cite{Arrow54}.
Formally, the GNEP consists of $N$ players, each player $\nu$  has the variables
$x^{\nu}\in \mathbb R^{n_{\nu}}$, whose strategy must belong to a set
 that
depends on the rival players' strategies.  We denote by $x$  the vector generated by all these decision variables:
$
x=\left(
x^{1},\cdots,x^{N}
\right),
$
which obviously has dimension $n=\sum_{\nu=1}^N n_{\nu}$, and by $x^{-\nu}$
 the vector formed by all the players'
decision variables except those of player $\nu$. To emphasize the $\nu$th player's variables
within $x$, we sometimes write $(x^{\nu},x^{-\nu})$ instead of $x$.
Each player has an objective function $\theta_{\nu}:\mathbb R^n \rightarrow \mathbb R$
 that depends on both his own
variables $x^{\nu}$ as well as on the variables $x^{-\nu}$
 of all other players. This mapping $\theta_{\nu}$ is
often called the utility function of player $\nu$, sometimes also the payoff function or loss
function, depending on the particular application in which the GNEP arises.
Furthermore, each player's  strategy has the feasible set $C_{\nu}(x^{-\nu})\subset \mathbb R^{n_{\nu}}$
 that depends on  the variables $x^{-\nu}$
 of all other players. The aim of player $\nu$, given the other players' strategies $x^{-\nu}$,
 is to
choose a strategy $x^{\nu}$ that solves the minimization problem
\begin{equation}\label{eq:p1}
\begin{array}{ll}
\min_{x_{\nu}}& \theta_{\nu}(x^{\nu},x^{-\nu})\\[4pt]
{\rm s.t.} & x_{\nu}\in C_{\nu}(x^{-\nu}).
\end{array}
\end{equation}
For any $x^{-\nu}$, the solution set of problem (\ref{eq:p1}) is denoted by $S_{\nu}(x^{-\nu})$.
The generalized Nash equilibrium (GNE) is
 a vector $\bar x\in \mathbb R^n$ such that
$$
\bar x^{\nu}\in S_{\nu}(\bar x^{-\nu}) \mbox{  for all }\nu = 1,\ldots, N.
$$
Such a point $\bar x$ is called a (generalized Nash) equilibrium or, more simply, a solution of
the GNEP.  In essence, Generalized Nash Equilibrium is the natural framework for analyzing industrial systems where agents are not just competing in outcomes (price, market share) but are also fundamentally coupled in their feasible actions by physical, regulatory, or contractual shared limits. Its application to quality stability demonstrates how it can move analysis from simple bilateral contracts to modeling the systemic stability of a multi-firm ecosystem under common performance goals. This makes GNEP a critical tool for optimizing complex, modern industrial networks.  A survey of GNEPs by  Facchinei and Kanzow (2010)\cite{FKanzow2010}  explained clearly  the background,  Kurash-Kuhn-Tucker conditions, stability of solutions and various numerical algorithms for GNEPs.

In this paper, we consider the GNEP when $C_{\nu}(x^{-\nu})$ is defined by a conic constraint, specified as follows
\begin{equation}\label{eq:coneC}
C_{\nu}(x^{-\nu})=\{x^{\nu}\in X_{\nu}:g_{\nu}(x^{\nu},x^{-\nu})\in K_{\nu}\},
\end{equation}
where $X_{\nu}\subset \mathbb R^{n_{\nu}}$ is a nonempty closed convex set, $K_{\nu}\subset {\cal Y}_{\nu}$ is closed convex cone and ${\cal Y}_{\nu}$ is a finitely dimensional Hilbert space. The Lagrangian function associated with (\ref{eq:p1}) with $C_{\nu}(x^{-\nu})$  in
(\ref{eq:coneC})
is defined by
$$
    l_{\nu}  (x^{\nu}, x^{-\nu}, y^{\nu}): = \theta_{\nu} (x^{\nu},x^{-\nu})
    +\langle y^{\nu}, g_{\nu}(x^{\nu},x^{-{\nu}})\rangle.
$$

To address the stability properties, we formally introduce a general parameter vector $w \in \mathbb R^d$ and consider the following generalized Nash equilibrium problem with canonically perturbations:
\begin{equation}\label{GNEP-p}
    \textbf{P}_{\nu} (x^{-{\nu}},w,v^{\nu},u^{\nu})\quad \quad\quad \quad
    \begin{array}{ll}
     \min\limits_{x^{\nu} \in X_{\nu}} & f_{\nu}  (x^{\nu}, x^{-{\nu}}; w)-\langle v^{\nu} , x^{\nu}\rangle\\[4pt]
   {\rm s.t.}& F_{\nu} (x^{\nu}, x^{-{\nu}}; w) - u^{\nu}\in K_{\nu},
   \end{array}
   \end{equation}
where $w \in \mathbb R^d$, $v^{\nu} \in \mathbb{R}^{n_{\nu}}$ and $ u^{\nu}\in {\cal Y}_{\nu}$ are perturbation parameters. For simplicity, we denote
$p=(w;v^1;\ldots; v^{N}; u^1;\ldots;u^{N})\in \mathbb R^d \times \mathbb R^{n_1}\times \cdots \times \mathbb R^{n_N}\times {\cal Y}_1\times  \cdots \times  {\cal Y}_N$.
Let $\bar w \in \mathbb R^d$ be a fixed parameter such that
$$
f_{\nu} (x^{\nu}, x^{-{\nu}};\bar w)=\theta_{\nu}(x^{\nu}, x^{-{\nu}})\quad  \mbox{ and } \quad F_{\nu} (x^{\nu}, x^{-{\nu}};\bar w)=g_{\nu}(x^{\nu}, x^{-{\nu}}).
$$
and the Lagrangian function $l_{\nu}$ becomes
$$
    l_{\nu}  (x^{\nu}, x^{-\nu}, y^{\nu}) = f_{\nu} (x^{\nu},x^{-\nu};\bar w)
    +\langle y^{\nu}, F_{\nu}(x^{\nu},x^{-{\nu}};\bar w)\rangle.
$$
In this setting,  the $\nu$-th player optimization problem (\ref{GNEP-p}) is a perturbed problem of the problem defined by (\ref{eq:p1}) and (\ref{eq:coneC}) when
$(w,v^k,u^k)$ is perturbed from $(\bar w,0,0)$.

We denote the feasible set of the $\nu$-th player optimization problem (\ref{GNEP-p}) by $C_{\nu}(x^{-\nu},w,u^{\nu})$, namely
$$
C_{\nu}(x^{-\nu},w,u^{\nu})=\{x^{\nu}\in X_{\nu}: F_{\nu} (x^{\nu}, x^{-{\nu}}; w) - u^{\nu}\in K_{\nu}\}.
$$
 The generalized Nash equilibrium (GNE) for fixed parameter $p$ is a solution to
\begin{equation}\label{NE}
    {\rm finding} \ \tilde{x}\in \mathbb{R}^n\  {\rm such\  that\
    each\ } \tilde{x}^k\  {\rm is\ a\ minimizer\ of\ } \textbf{P}_k (\tilde{x}^{-k},w,v^k,u^k),\, k=1,\ldots, N.
    \end{equation}
The local generalized Nash equilibrium  for parameter $p$ is defined as a solution to
\begin{equation}\label{LGNE}
    {\rm finding} \ \tilde{x}\in \mathbb{R}^n\  {\rm such\  that\
    each\ } \tilde{x}^k\  {\rm is\ a\ local\  minimizer\ of\ } \textbf{P}_k (\tilde{x}^{-k},w,v^k,u^k),\, k=1,\ldots, N.
    \end{equation}
The Lagrangian function associated with (\ref{GNEP-p}) is defined by
$$
    L_{\nu}  (x^{\nu}, x^{-\nu}, y^{\nu} ;w,v^{\nu},u^{\nu}): = f_{\nu} (x^{\nu},x^{-\nu}; w)  -\langle v^{\nu} , x^{\nu}  \rangle
    +\langle y^{\nu}, F^{\nu}(x^{\nu},x^{-{\nu}}; w)-u^{\nu}\rangle
$$
for $\nu=1,...,N$. Under the basic constraint qualification for problem $\textbf{P}_{\nu} (\bar x^{-{\nu}},\bar w,0,0)$  that
$$
\left.
\begin{array}{r}
0\in N_{X_{\nu}}(\bar x^{\nu})+{\rm D}_{x^{\nu}}F_{\nu}(\bar x,\bar w)^*y^k=0\\[4pt]
y^k \in N_{K_{\nu}}(F_{\nu}(\bar x,\bar w))
\end{array}
\right
\} \Rightarrow y^k=0,
$$
there is the formula (c.f.  \cite[Theorem 6.14]{RW98}) that
$$
N_{C_{\nu}(\bar x^{-\nu}, \bar w,0)}(\bar x^{\nu})=N_{X_{\nu}}(\bar x^{\nu})+{\rm D}_{x^{\nu}}F_{\nu}(\bar x,\bar w)^*N_{K_{\nu}}(F_{\nu}(\bar x,\bar w)).
$$
The necessary optimality condition for $\bar x^{\nu}$ being a local solution of $\textbf{P}_{\nu} (\bar x^{-{\nu}},\bar w,0,0)$ is that
\begin{equation}\label{KKT-original}
0\in \nabla_{x^{\nu}}f_{\nu}(\bar x, \bar w) + N_{C_{\nu}(\bar x^{-\nu}, \bar w,0)}(\bar x^{\nu}).
\end{equation}
Noting that each $K_{\nu}$ is a nonempty closed convex cone, we obtain that condition (\ref{KKT-original}) is equivalent to
\begin{equation}\label{KKT-originalD}
\begin{array}{l}
0\in \nabla_{x^{\nu}}L_{\nu}(\bar x, y^{\nu};\bar w,0,0)+N_{X_{\nu}}(\bar x^{\nu}),\\[4pt]
0\in -\nabla_{y^{\nu}}L_{\nu}(\bar x, y^{\nu};\bar w,0,0)+N_{K_{\nu}^{\circ}}(y^{\nu}),
\end{array}
\end{equation}
where $K_{\nu}^{\circ}$ is the polar of $K_{\nu}$.

  This brings us to the notion of a Lagrangian generalized Nash equilibrium (LGNE) by Rockafellar (2024) \cite{Rock24}. The set of LGNEs, denoted here by  $S_{\rm Lag}: \mathbb{R}^d\times \mathbb R^{n}\times {\cal Y}\rightarrow \mathbb{R}^{n} \times {\cal Y}$,
   of the GNEP  whose $\nu$-player solves
   $\textbf{P}_{\nu} (x^{-{\nu}},w,v^{\nu},u^{\nu})$ for fixed  $p$ is defined as

\begin{equation}\label{KKT-mapping}
S_{\rm Lag}(p)=\left
\{(x,y)\in \mathbb R^n \times {\cal Y}:\left(
\begin{array}{l}
0\in \nabla_{x^{\nu}}L_{\nu}(x, y^{\nu};w,v^{\nu},u^{\nu})+N_{X_{\nu}}(x^{\nu})\\[4pt]
0\in -\nabla_{y^{\nu}}L_{\nu}(x, y^{\nu};w,v^{\nu},u^{\nu})+N_{K_{\nu}^{\circ}}(y^{\nu})
\end{array} \right),\nu=1,\ldots,N
\right\},
\end{equation}
 where ${\cal Y}= {\cal Y}_1 \times \cdots\times  {\cal Y}_N$.

The primary advantages of LGNE are  computational tractability (it can be computed), interpretability (it has a clear price-based meaning), and refinement (it selects a sensible equilibrium from a potentially large set). For most industrial and economic applications-from environmental markets to network resource allocation-when analysts talk about computing or implementing a GNE, they are almost always specifically seeking a LGNE.

 When considering constrained optimization problems, the set-valued mapping for LGNEs-defined by (\ref{KKT-mapping})-reduces to the KKT solution mapping \cite{DSZhang2017}, which has been extensively investigated over the past 45 years. Research on the stability properties of the KKT solution mapping in nonlinear programming is relatively mature. Robinson (1980) \cite{Robinson1980} established that the strong second-order sufficient condition (SSOSC) and the linear independence constraint qualification (LICQ) together imply the strong regularity of solutions to the KKT system. Notably, the converse also holds, as shown by Jongen et al. (1990) \cite{Jongen1990}.
Subsequently, Robinson (1982) \cite{Robinson1982} demonstrated that the second-order sufficient condition (SOSC) and the Mangasarian-Fromovitz constraint qualification (MFCQ) guarantee the upper Lipschitz continuity of KKT solutions. Dontchev and Rockafellar (1997) \cite{DRockafellar97} further showed that the strict MFCQ and second-order sufficient optimality conditions are equivalent to the robust isolated calmness of the KKT system.
For second-order conic optimization, key results on the stability properties of the KKT solution mapping are available in \cite{BR05, WZ09, ZZWW17}. Similarly, for nonlinear semidefinite optimization, relevant findings on this topic can be found in \cite{Sun06, ChanS08, ZZhang16, DSZhang2017}.

 Among existing studies on Nash equilibrium problems (NEPs), results concerning stability remain relatively limited. The first work focusing on the stability of the KKT solution mapping $S_{{\rm Lag}}$ was conducted by Kojima et al. (1985) \cite{Kojima85}, who established a criterion for the strong regularity of the KKT mapping in multi-person noncooperative games. Specifically, the problem they considered was a classical NEP where each player faced simple linear constraints.
After 40 years, Diao et al. (2025) \cite{DDZhang2025} investigated the stability properties of the KKT solution mapping $S_{{\rm Lag}}$
 for classical NEPs under canonical perturbations, with each player subject to equality and inequality constraints.
Separately, Rockafellar (2024) \cite{Rock24} studied three stability notions for generalized Nash equilibrium problems (GNEPs): full stability, tilt stability, and the near-tilt stability (or near-full stability) of local equilibria.
To date, no stability results regarding LGNEs for GNEPs have been reported. This gap in the literature motivates our investigation into the stability properties of LGNEs for GNEPs.

 The paper is organized as follows. In Section 2, we establish characterizations of the Aubin property and isolated calmness of LGNEs for the GNEP defined by (\ref{eq:p1}) and (\ref{eq:coneC}), and apply these characterizations to the scenario where  $K_{\nu}$
 is taken to be a polyhedral convex cone.
In Section 3, we focus on GNEPs with shared constraints and investigate the stability of the set of consensus LGNEs -- those in which all players share the same multipliers for the shared constraints. In Section 4, we examine the Aubin property and isolated calmness of LGNEs for classical Nash equilibrium problems (NEPs). Section 5 offers concluding remarks.

\section{Stability  of Lagrangian generalized Nash equilibriums}
\setcounter{equation}{0}

For developing stability properties of $S_{\rm Lag}$, we need to understand stability notions of a general set-valued mapping from \cite{RW98} or \cite{DRockafellar2009}. Here we only list four  stability concepts of a set-valued mapping $F:{\cal  X}\rightrightarrows {\cal Y}$ from a finite-dimensional Hilbert space ${\cal X}$ to another  finite-dimensional Hilbert space ${\cal Y}$, which are used in our discussions.

\begin{definition}\label{def-Lipl}
The set-valued mapping $F:{\cal  X}\rightrightarrows {\cal Y}$
 is said to have a single-valued and Lipschitz continuous localization at
$\bar x$  for $\bar y \in F(\bar x)$  if there exist open neighborhoods $U$ of $\bar x$ and $V$ of $\bar y$
 such that the localization
$s:U \rightarrow V$ of $F$ with
${\rm gph}\,s={\rm gph}\, F \cap (U\times V)$ is single-valued and Lipschitz continuous.
\end{definition}
\begin{definition}\label{Aubin property} The set-valued mapping $F:{\cal  X}\rightrightarrows {\cal Y}$ is said to have  Aubin property around $(\bar x,\bar y)\in {\rm gph}\, F$ if
    $$
    F(x)\cap V \subset F(u)+L\|u-x\|\textbf{B}_{\cal Y},\, {\rm for all }\,\, (x,u)\in U\times U,
    $$
for neighborhoods $U$ of $\bar x$ and  $ V $ of $\bar y$ and a constant $L>0$.
\end{definition}
\begin{definition}\label{calmness-s}
   The  multi-valued mapping  $F:{\cal X}\rightrightarrows {\cal Y}$ is said to be calm  at  $\bar x$  if there is  a constant $\kappa>0$ along with  a neighborhood $V$ of $\bar x$   such that
$$
  F(x)  \subseteq F(\bar x)+\kappa \|x- \bar x\| \textbf{B}_{{\cal Y}}, \quad  \forall\,  x \in V.
$$
\end{definition}

\begin{definition}\label{Isolated calmness}
   The set-valued mapping  $F:{\cal X}\rightrightarrows {\cal Y}$ is said to be isolated  calm at  $\bar x$ for $\bar y$  if there is a constant $\kappa >0$ along with  a neighborhood $V$ of $\bar x$ and a neighborhood $W$ of $\bar y$  such that
$$
  F(x) \cap W \subseteq \{\bar y\}+\kappa \|x-\bar x\| \textbf{B}_{{\cal Y}}, \quad  \forall\,  x \in V.
$$
\end{definition}

In this section, we first study stability properties of Lagrangian generalized equilibria for the generalized Nash equilibrium problem (GNEP) defined by (\ref{eq:p1}) and (\ref{eq:coneC}) and $K_{\nu}$ is an arbitrary closed convex cone for each $\nu=1,\ldots, N$. After that we specify each $K_{\nu}$ as a polyhendral convex cone, and develop the stability properties of LGNEs.
\subsection{Stability for the general setting}\label{sub-1}
In order to express $S_{{\rm Lag}}(p)$ as a compact formulation, we introduce the following notations. Define $u=(u^1;\ldots;u^N)$, $y=(y^1;\ldots;y^N)$, $X=X_1 \times \cdots\times X_N$ and $K=K_1\times \cdots \times K_N$,
$$
\textbf{L}(x,y;p)=\left(
\begin{array}{c}
\nabla_{x^1}L_{1}(x, y^{1};w,v^{1},u^{1})\\[2pt]
\cdots\\
\nabla_{x^{N}}L_{N}(x, y^{N};w,v^{N},u^{N})
\end{array}
\right), \textbf{F}(x;w)=\left(
\begin{array}{c}
F_{1}(x,;w)\\[2pt]
\cdots\\
F_{N}(x,;w)
\end{array}
\right)
$$
and
$$
\textbf{l}(x,y)=\left(
\begin{array}{c}
\nabla_{x^1}l_{1}(x, y^{1})\\[2pt]
\cdots\\
\nabla_{x^{N}}l_{N}(x, y^{N})
\end{array}
\right), \textbf{g}(x)=\left(
\begin{array}{c}
g_{1}(x)\\[2pt]
\cdots\\
g_{N}(x)
\end{array}
\right).
$$
Then $S_{{\rm Lag}}(p)$ can be expressed as the following compact form
\begin{equation}\label{Lag-compact}
S_{\rm Lag}(p)=\left
\{(x,y)\in \mathbb R^n \times {\cal Y}:0\in \left(
\begin{array}{c}
\textbf{L}(x,y;p)\\[4pt]
-\textbf{F}(x;w)+u
\end{array}
\right)+N_{X \times K^{\circ}}(x,y)\right\}.
\end{equation}
Let $\bar p=(\bar w,0,0)$ and $(\bar x,\bar y) \in S_{{\rm Lag}}(\bar p)$.

For studying stability properties of $S_{\rm Lag}(p)$ when $p$ is perturbed from $\bar p$, we consider its linearized  set-valued mapping
\begin{equation}\label{Lag-compact}
L_{{\rm Lag}}(\delta)=\left
\{(x,y)\in \mathbb R^n \times {\cal Y}:\delta\in \left(
\begin{array}{c}
\bar q_1 +\textbf{G}x+{\cal B}y\\[4pt]
\bar q_2-{\cal A}x
\end{array}
\right)+N_{X \times K^{\circ}}(x,y)\right\}.
\end{equation}
where
\begin{equation}\label{eq:qs}
\begin{array}{l}
\bar q_1=\textbf{l}(\bar x,\bar y)-\textbf{G}\bar x-{\cal B}\bar y,\\[4pt]
\bar q_2=-\textbf{g}(\bar x)+{\cal A}\bar x
\end{array}
\end{equation}
and
\begin{equation}\label{eq:ders}
\begin{array}{l}
\textbf{G}=\left[
\begin{array}{cccc}
\nabla^2_{x^1x^1}l_1(\bar x,\bar y^1) & \nabla^2_{x^1x^2}l_1(\bar x,\bar y^1)& \cdots & \nabla^2_{x^1x^N}l_1(\bar x,\bar y^1)\\[4pt]
\nabla^2_{x^2x^1}l_2(\bar x,\bar y^2) & \nabla^2_{x^2x^2}l_2(\bar x,\bar y^2)& \cdots & \nabla^2_{x^2x^N}l_2(\bar x,\bar y^2)\\[4pt]
\vdots &\vdots & \vdots & \vdots\\[4pt]
\nabla^2_{x^Nx^1}l_N(\bar x,\bar y^N) & \nabla^2_{x^Nx^2}l_2(\bar x,\bar y^N)& \cdots & \nabla^2_{x^Nx^N}l_N(\bar x,\bar y^N)
\end{array}
\right],\\[28pt]
{\cal B}=\left(
\begin{array}{cccc}
{\rm D}_{x^1}g_1(\bar x)^* & 0 & \cdots & 0\\[4pt]
0 &D _{x^2}g_2(\bar x)^*& \cdots & 0 \\[4pt]
\vdots & \vdots & \vdots & \vdots \\[4pt]
0& 0 & \cdots & {\rm D}_{x^N}g_N(\bar x)^* \\[4pt]
\end{array}
\right),\\[28pt]
{\cal A}=\left(
\begin{array}{cccc}
{\rm D}_{x^1}g_1(\bar x) & {\rm D}_{x^2}g_1(\bar x) & \cdots &{\rm D}_{x^N}g_1(\bar x)\\[4pt]
{\rm D}_{x^1}g_2(\bar x) & {\rm D}_{x^2}g_2(\bar x) & \cdots &{\rm D}_{x^N}g_2(\bar x)\\[4pt]
\vdots & \vdots & \vdots & \vdots \\[4pt]
{\rm D}_{x^1}g_N(\bar x) & {\rm D}_{x^2}g_N(\bar x) & \cdots &{\rm D}_{x^N}g_N(\bar x) \\[4pt]
\end{array}
\right).
\end{array}
\end{equation}

The following result will be used to develop the   Lipschitz continuous single-valued localization of $S_{{\rm Lag}}$ from that of $L_{{\rm Lag}}$.
\begin{lemma}\label{first-app}
Let $f:\mathbb R^{d}\times \mathbb R^n\rightarrow \mathbb {\cal Z}$ be differentiable with respect to $x$
  in a neighborhood of $(\bar p,\bar x)$ and
let $f$ and ${\rm D}_x f$ be continuous in this neighborhood, where ${\cal Z}$ is a finite dimensional Hilbert space. Then the function $h(x) =
f(\bar p,\bar x)+{\rm D}_x f(\bar p,\bar x)(x-\bar x)$ is a strict first-order approximation to $f$ with respect to
$x$ uniformly in $p$ at $(\bar p,\bar x)$.
\end{lemma}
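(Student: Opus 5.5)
We first make the target precise, using the standard definition from Dontchev and Rockafellar: $h$ is a strict first-order approximation to $f$ with respect to $x$ uniformly in $p$ at $(\bar p,\bar x)$ if $h(\bar x)=f(\bar p,\bar x)$ and, for every $\varepsilon>0$, there are neighborhoods $Q$ of $\bar p$ and $U$ of $\bar x$ such that
$$
\|[f(p,x)-h(x)]-[f(p,x')-h(x')]\|\le \varepsilon\|x-x'\|\quad \mbox{for all } x,x'\in U,\ p\in Q.
$$
The equality $h(\bar x)=f(\bar p,\bar x)$ is immediate from the definition of $h$. The whole task is therefore the uniform Lipschitz estimate on the remainder $e(p,x):=f(p,x)-h(x)=f(p,x)-f(\bar p,\bar x)-{\rm D}_xf(\bar p,\bar x)(x-\bar x)$.

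The plan is a mean value argument in integral form, valid for ${\cal Z}$-valued maps. For fixed $p$, the map $x\mapsto e(p,x)$ is differentiable near $\bar x$ with
$$
{\rm D}_x e(p,x)={\rm D}_xf(p,x)-{\rm D}_xf(\bar p,\bar x).
$$
Fix $\varepsilon>0$. Since ${\rm D}_xf$ is jointly continuous at $(\bar p,\bar x)$, we choose a ball $Q$ around $\bar p$ and a convex ball $U$ around $\bar x$ with $Q\times U$ inside the given neighborhood and
$$
\|{\rm D}_xf(p,z)-{\rm D}_xf(\bar p,\bar x)\|\le\varepsilon\quad \mbox{for all } (p,z)\in Q\times U.
$$

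For $x,x'\in U$ and $p\in Q$, the segment $[x',x]$ lies in $U$ by convexity. The function $t\mapsto e(p,x'+t(x-x'))$ is differentiable on $[0,1]$ with continuous derivative, because ${\rm D}_xf$ is continuous. The fundamental theorem of calculus, applied componentwise in an orthonormal basis of ${\cal Z}$, then gives
$$
e(p,x)-e(p,x')=\int_0^1[{\rm D}_xf(p,x'+t(x-x'))-{\rm D}_xf(\bar p,\bar x)](x-x')\,dt,
$$
and hence $\|e(p,x)-e(p,x')\|\le\varepsilon\|x-x'\|$.

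There is no real obstacle here. The only points needing care are these:
\begin{enumerate}[(i)]
\item Continuity of ${\rm D}_xf$ in $(p,x)$ jointly, not merely in $x$, is what makes the estimate uniform in $p$.
\item The neighborhood $U$ must be taken convex so that the segments stay inside the region of differentiability.
\end{enumerate}
If one prefers to avoid integrals, the mean value inequality $\|\phi(1)-\phi(0)\|\le\sup_t\|\phi'(t)\|$ for vector-valued $\phi$ gives the same bound.
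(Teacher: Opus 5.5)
Your proof is correct and follows the same route as the paper: joint continuity of ${\rm D}_xf$ at $(\bar p,\bar x)$ on a convex neighborhood, then a mean value argument showing $\widehat{{\rm lip}}_x(e;(\bar p,\bar x))=0$ for the remainder $e$. The one difference is in your favor. You use the integral form (or the mean value inequality), which is valid for ${\cal Z}$-valued maps, whereas the paper asserts a pointwise mean value equality $f(p,x)-f(p,x')={\rm D}_xf(p,\tilde x)(x-x')$ that need not hold when $\dim{\cal Z}>1$.
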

{\bf Proof}. Since ${\rm D}_x f$ is continuous in a neighborhood of $(\bar p, \bar x)$, for arbitrary $\varepsilon>0$ there exist a convex neighborhood
$U$ of $\bar x$ and a convex neighborhood $V$ of $\bar p$ such that
$$
\|{\rm D}_x f(p, \tilde x)-{\rm D}_x f(\bar p,\bar x)\|\leq \varepsilon \mbox{ for } p\in V \mbox{ and } \tilde x \in U.
$$
Then for $(p,x,x')\in V \times U\times U$, there exists $\tilde x$ in the segment connecting $x$ and $x'$ such that
$$
f(p,x)-f( p,x')={\rm D}_x f(p,\tilde x)(x-x')
$$
so that
$$
|f(p,x)-[f( p,x')+{\rm D}_x f( \bar p,\bar x)(x-x')]|=\|[{\rm D}_x f(p,\tilde x)-{\rm D}_x f( \bar p,\bar x)](x-x')\|
\leq \varepsilon \|x'-x\|.
$$
This implies
$$
\widehat{{\rm lip}}_x (e;(\bar p,\bar x))=0\mbox{ for }
e(p,x)=f(p,x)-[f( \bar p,\bar x)+{\rm D}_x f( \bar p,\bar x)(x-\bar x)].
$$
Therefore,  the function $h(x) =
f(\bar p,\bar x)+{\rm D}_x f(\bar p,\bar x)(x-\bar x)$ is a strict first-order approximation to $f$ with respect to
$x$ uniformly in $p$ at $(\bar p,\bar x)$.
The proof is completed. \hfill $\Box$

The following proposition shows that the  Lipschitz continuous single-valued localization of $L_{{\rm Lag}}$ is sufficient to that  of $S_{{\rm Lag}}$.
\begin{proposition}\label{single-valued-lip}
Suppose $(\bar x,\bar y)\in S_{{\rm Lag}}(\bar p)$. Let $f_{\nu}: \mathbb R^n \times \mathbb R^d\rightarrow \mathbb R$ and $F_{\nu}: \mathbb R^n \times \mathbb R^d\rightarrow {\cal Y}_{\nu}$ be  twice differentiable with respect to $x$
  in a neighborhood of $(\bar x,\bar w)$  and $f_{\nu}, F_{\nu}$,  $\nabla_x f_{\nu}$, ${\rm D}_xF_{\nu}$ and $\nabla^2_{xx} f_{\nu}$, ${\rm D}^2_{xx}F_{\nu}$ are continuous in this neighborhood. Moreover, suppose that $f_{\nu}, F_{\nu}$,  $\nabla_x f_{\nu}$, ${\rm D}_xF_{\nu}$ are Lipschitz continuous in a neighborhood around  $\bar w$ uniformly in $x$ in a neighborhood of $\bar x$.
  If $L_{{\rm Lag}}$ has a Lipschitz continuous single-valued localization at $0$ for $(\bar x,\bar y)$, then $S_{{\rm Lag}}$ has a Lipschitz continuous single-valued localization at $\bar p=(\bar w,0,0)$ for $(\bar x,\bar y)$.
  \end{proposition}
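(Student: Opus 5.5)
We plan to apply the implicit function theorem for generalized equations of Dontchev and Rockafellar (Robinson's theorem in the form \cite[Theorem 2B.5/2B.7]{DRockafellar2009}). Write $z=(x,y)$ and $\Phi(p,z)=\big(\textbf{L}(x,y;p);\,-\textbf{F}(x;w)+u\big)$. Then $S_{\rm Lag}(p)=\{z:0\in \Phi(p,z)+N_{X\times K^{\circ}}(z)\}$, where the normal cone term does not depend on $p$. The theorem states the following. Suppose $h$ is a strict first-order approximation to $\Phi$ with respect to $z$, uniformly in $p$, at $(\bar p,\bar z)$. Suppose $\Phi$ is Lipschitz in $p$ uniformly in $z$ near $(\bar p,\bar z)$. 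Suppose the mapping $\delta\mapsto (h+N_{X\times K^\circ})^{-1}(\delta)$ has a Lipschitz continuous single-valued localization at $0$ for $\bar z$. Then $S_{\rm Lag}$ has a Lipschitz continuous single-valued localization at $\bar p$ for $\bar z$. It therefore suffices to verify these three hypotheses with $h$ equal to the linearization underlying $L_{\rm Lag}$.

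First, we identify $h$. Set
$$
h(z)=\Phi(\bar p,\bar z)+{\rm D}_z\Phi(\bar p,\bar z)(z-\bar z).
$$
At $\bar p=(\bar w,0,0)$ we have $\textbf{L}(x,y;\bar p)=\textbf{l}(x,y)$ and $\textbf{F}(x;\bar w)=\textbf{g}(x)$. The partial Jacobian of $\textbf{l}$ in $x$ at $(\bar x,\bar y)$ is $\textbf{G}$, and in $y$ it is ${\cal B}$, because $\nabla_{x^\nu}l_\nu$ is linear in $y^\nu$ with coefficient ${\rm D}_{x^\nu}g_\nu(\bar x)^*$. The Jacobian of $-\textbf{g}$ is $-{\cal A}$. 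Using (\ref{eq:qs}), this gives $h(x,y)=(\bar q_1+\textbf{G}x+{\cal B}y;\ \bar q_2-{\cal A}x)$. Consequently $(h+N_{X\times K^\circ})^{-1}=L_{\rm Lag}$, and the localization hypothesis is exactly the assumption of the proposition.

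Second, we show that $h$ is a strict first-order approximation. We apply Lemma \ref{first-app} with $f=\Phi$ and with $z$ in place of $x$. This requires $\Phi$ and ${\rm D}_z\Phi$ to be continuous near $(\bar p,\bar z)$. Since $\Phi$ involves $\nabla_{x^\nu}f_\nu$, ${\rm D}_{x^\nu}F_\nu^*y^\nu$, $F_\nu$, and the affine terms $-v^\nu$, $u^\nu$, its $z$-derivative involves $\nabla^2_{xx}f_\nu$, ${\rm D}^2_{xx}F_\nu$ applied to $y^\nu$, ${\rm D}_xF_\nu$, and constants. All of these are continuous by hypothesis, with joint continuity in $(x,y,w)$ following because they are bilinear in $y$.

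Third, we show that $\Phi$ is Lipschitz in $p$ uniformly in $z$. The dependence on $(v,u)$ is affine with unit coefficients. For the dependence on $w$, the hypothesis gives Lipschitz continuity of $\nabla_xf_\nu$, ${\rm D}_xF_\nu$ and $F_\nu$ in $w$, uniformly for $x$ near $\bar x$. For $y$ in a bounded neighborhood of $\bar y$, the term ${\rm D}_{x^\nu}F_\nu(x;w)^*y^\nu$ then has Lipschitz constant bounded by $\sup\|y\|$ times the Lipschitz constant of ${\rm D}_xF_\nu$. Hence $\Phi$ is Lipschitz in $p$ uniformly for $z$ in a bounded neighborhood, and the theorem yields the claim.

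We expect the main obstacle to be this last step together with the bookkeeping in the first. The precise form of the cited implicit function theorem must be matched carefully: strict approximation uniformly in $p$, and the Lipschitz modulus in $p$. We must also check that the $y$-dependence, which is unbounded globally, is harmless. We handle it by restricting to bounded neighborhoods of $\bar y$.
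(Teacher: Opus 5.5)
Your proposal is correct and follows essentially the same route as the paper. Both arguments use Lemma~\ref{first-app} to show that $h$ is a strict first-order approximation of $f$ in $z=(x,y)$ uniformly in $p$, use the hypotheses to bound the Lipschitz modulus in $p$, and conclude with the Dontchev--Rockafellar implicit function theorem for generalized equations. The paper phrases that last step through strong metric regularity of $h+N_{X\times K^{\circ}}$ (their Proposition 3G.1 and Theorem 3G.4), which is equivalent to the inverse-localization form you cite, and your restriction to a bounded neighborhood of $\bar y$ fills in the Lipschitz-in-$p$ detail that the paper only asserts.
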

{\bf Proof}. Define
$$
f(p,x,y)=\left(
\begin{array}{c}
\textbf{L}(x,y;p)\\[4pt]
-\textbf{F}(x;w)+u
\end{array}
\right) \mbox{ and } h(x,y)=\left(
\begin{array}{c}
\bar q_1 +{\cal J}_x\textbf{L}(\bar x,\bar y;\bar p)x+{\rm D}_y\textbf{L}(\bar x,\bar y;\bar p)y\\[4pt]
\bar q_2-{\rm D}_x\textbf{F}(\bar x;\bar w,0)x
\end{array}
\right).
$$
   Noting that, for $\nu=1,\ldots, N$, $f_{\nu}: \mathbb R^n \times \mathbb R^d\rightarrow \mathbb R$ and $F_{\nu}: \mathbb R^n \times \mathbb R^d\rightarrow {\cal Y}_{\nu}$ be  twice differentiable with respect to $x$
  in a neighborhood of $(\bar x,\bar w)$  and $f_{\nu}, F_{\nu}$,  $\nabla_x f_{\nu}$, ${\rm D}_xF_{\nu}$ and $\nabla^2_{xx} f_{\nu}$, ${\rm D}^2_{xx}F_{\nu}$ are continuous in this neighborhood, from Lemma \ref{first-app}, the function $$h(x,y) =
f(\bar p,\bar x,\bar y)+{\rm D}_x f(\bar p,\bar x,\bar y)(x-\bar x)+{\rm D}_y f(\bar p,\bar x,\bar y)(y-\bar y)$$ is a strict first-order approximation to $f$ with respect to
$(x,y)$ uniformly in $p$ at $(\bar p,(\bar x,\bar y))$. And
$$
\widehat{{\rm lip}}_x (e;(\bar p,\bar x,\bar y))=0
$$
 for
 $$
e(p,x,y)=f(p,x,y)-[f( \bar p,\bar x,\bar y)+{\rm D}_x f( \bar p,\bar x,\bar y)(x-\bar x)+{\rm D}_y f( \bar p,\bar x,\bar y)(y-\bar y)].
$$
Since $f_{\nu}, F_{\nu}$,  $\nabla_x f_{\nu}$, ${\rm D}_xF_{\nu}$ are Lipschitz continuous in a neighborhood around  $\bar w$ uniformly in $x$ in a neighborhood of $\bar x$, we have
$$
\widehat{{\rm lip}}_p(f;(\bar p,(\bar x,\bar y)))\leq \lambda_0 <\infty
$$
for some $\lambda_0 >0$. Noting that
$$
S_{{\rm Lag}}(p)=\{(x,y): 0\in f(p,x,y)+N_{X \times K^{\circ}}(x,y)\} \mbox{ and } L_{{\rm Lag}}(\delta)=\{(x,y): \delta \in
h(x,y)+N_{X \times K^{\circ}}(x,y)\}.
$$

It follows from \cite[Proposition 3G.1]{DRockafellar2009}) that $(x,y) \rightarrow h(x,y)+N_{X \times K^{\circ}}(x,y)$ is strongly metrically regular at $0$ for
$(\bar x,\bar y)$ if
and only if its inverse $L_{{\rm Lag}}$ has a Lipschitz continuous single-valued localization at $0$ for $(\bar x,\bar y)$. In view of \cite[Theorem 3G.4]{DRockafellar2009}), we have if $L_{{\rm Lag}}$ has a Lipschitz continuous single-valued localization at $0$ for $(\bar x,\bar y)$, then $S_{{\rm Lag}}$ has a Lipschitz continuous single-valued localization at $\bar p=(\bar w,0,0)$ for $(\bar x,\bar y)$. The proof is completed. \hfill $\Box$

The following proposition is of considerable importance, as it establishes the equivalence between $S_{{\rm Lag}}$ and $L_{{\rm Lag}}$ in terms of both the Aubin property and the isolated calmness property.
\begin{proposition}\label{Aubin-Calm}
Suppose $(\bar x,\bar y)\in S_{{\rm Lag}}(\bar p)$. Let $f_{\nu}: \mathbb R^n \times \mathbb R^d\rightarrow \mathbb R$ and $F_{\nu}: \mathbb R^n \times \mathbb R^d\rightarrow {\cal Y}_{\nu}$ be  twice differentiable with respect to $x$
  in a neighborhood of $(\bar x,\bar w)$  and $f_{\nu}, F_{\nu}$,  $\nabla_x f_{\nu}$ and  ${\rm D}_xF_{\nu}$  are strictly differentiable with respect to $(x,w)$
   in this neighborhood.
  Then
  \begin{itemize}
  \item[{\rm (a)}] $L_{{\rm Lag}}$ has Aubin property at $0$ for $(\bar x,\bar y)$ if and only if  $S_{{\rm Lag}}$ has Aubin property at $\bar p=(\bar w,0,0)$ for $(\bar x,\bar y)$.
    \item[{\rm (b)}]
   $L_{{\rm Lag}}$ has has the isolated calmness property at $0$ for $(\bar x,\bar y)$ if and only if $S_{{\rm Lag}}$ has has the isolated calmness property at $\bar p=(\bar w,0,0)$ for $(\bar x,\bar y)$.
  \end{itemize}
\end{proposition}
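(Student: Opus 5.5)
We would prove both parts by rewriting $S_{\rm Lag}$ as the solution mapping of a parametrized generalized equation, $S_{\rm Lag}(p)=\{z=(x,y):0\in f(p,z)+N_{X\times K^{\circ}}(z)\}$, and $L_{\rm Lag}(\delta)=\{z:\delta\in h(z)+N_{X\times K^{\circ}}(z)\}$. Here $f$ and $h$ are as in the proof of Proposition \ref{single-valued-lip}, so $h(z)=f(\bar p,\bar z)+{\rm D}_zf(\bar p,\bar z)(z-\bar z)$. Then we would apply the implicit function theorems of Dontchev and Rockafellar for the Aubin property and for isolated calmness (\cite[Theorem 3F.9 / 3F.10]{DRockafellar2009} and \cite[Theorem 3I.12 / 3I.13]{DRockafellar2009}). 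We would use the \emph{full} parametrization, not only the $\delta$ one. The central observation is that the canonical perturbations $(v,u)$ already enter $f$ additively, with $-v$ in the $x$-rows and $+u$ in the $y$-rows. Hence $\delta=(v,-u)$ ranges over a full neighborhood of $0$, and $L_{\rm Lag}$ is essentially the restriction of a linearized $S_{\rm Lag}$ to $w=\bar w$.

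The first step is regularity of the data. Strict differentiability of $f_\nu,F_\nu,\nabla_xf_\nu,{\rm D}_xF_\nu$ in $(x,w)$ gives that $f$ is strictly differentiable in $(p,z)$ at $(\bar p,\bar z)$. Indeed, $y$ enters only bilinearly, through ${\rm D}_{x^\nu}F_\nu^*y^\nu$ and $\langle y^\nu,\cdot\rangle$, and the terms in $v,u$ are linear. As in Lemma \ref{first-app}, this gives that $h$ is a strict first-order approximation of $f$ in $z$ uniformly in $p$, and that $f$ is Lipschitz in $p$ uniformly in $z$ near $\bar z$.

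For (a), ``if'': assume $L_{\rm Lag}$ has the Aubin property at $0$ for $\bar z$. Equivalently, the mapping $\Phi=h+N_{X\times K^\circ}$ is metrically regular at $\bar z$ for $0$. Theorem 3F.9 (Lyusternik--Graves type implicit mapping theorem) with ${\rm lip}_p f<\infty$ then yields the Aubin property of $S_{\rm Lag}$ at $\bar p$ for $\bar z$. ``Only if'': assume $S_{\rm Lag}$ has the Aubin property at $\bar p$ for $\bar z$. Restrict to $w=\bar w$ and write $\tilde f(\delta,z)=f((\bar w,v,u),z)$, which is affine in $\delta$ with an invertible (block $\pm$ identity) map $\delta\mapsto (v,u)$. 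Then $\{z:\delta\in \bar f(z)+N(z)\}$, with $\bar f=f(\bar p,\cdot)$, has the Aubin property at $0$. So $\bar f+N$ is metrically regular at $\bar z$ for $0$. Metric regularity is stable under strict first-order approximations (\cite[Theorem 3F.1/Corollary 3F.5]{DRockafellar2009}), since $\bar f-h$ has Lipschitz modulus $0$ at $\bar z$. Thus $h+N$ is metrically regular, i.e. $L_{\rm Lag}$ has the Aubin property.

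For (b) we argue the same way with isolated calmness, equivalently strong metric subregularity of the inverse. The direction $L_{\rm Lag}\Rightarrow S_{\rm Lag}$ follows from \cite[Theorem 3I.12]{DRockafellar2009}, using that $f$ is calm in $p$ uniformly in $z$ and that $h$ is a first-order approximation. The converse passes through $\bar f+N$ exactly as in (a), now using stability of strong metric subregularity under first-order approximations (\cite[Theorem 3I.6]{DRockafellar2009}).

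The main obstacle is the ``only if'' directions. The cited implicit function theorems require an ample parametrization; this holds here precisely because the canonical parameters $(v,u)$ enter additively with surjective dependence. We must also verify that fixing $w=\bar w$ preserves the localization neighborhoods.
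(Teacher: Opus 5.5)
Your proof is correct. It uses the same framework as the paper: you write $S_{\rm Lag}$ and $L_{\rm Lag}$ as solution maps of $0\in f(p,z)+N_\Theta(z)$ and $\delta\in h(z)+N_\Theta(z)$ and invoke Dontchev--Rockafellar. The difference is in how the converse directions are obtained.

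The paper computes ${\rm D}_pf(\bar p,\bar z)$ and notes that it is onto, because of the $-I_n$ and ${\cal I}_{\cal Y}$ blocks. It then quotes the ample-parameterization theorems \cite[Theorem 3F.9]{DRockafellar2009} and \cite[Theorem 3I.13]{DRockafellar2009}, which give both implications at once. This comes after translating the Aubin property and isolated calmness of $L_{\rm Lag}$ into metric regularity and strong metric subregularity of $h+N_\Theta$.

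You split each equivalence into two halves. The ``if'' halves come from the one-directional implicit mapping theorems. These need only Lipschitz (resp.\ calm) dependence of $f$ on $p$ uniformly in $z$, plus $h$ being a strict (resp.\ ordinary) first-order approximation. For the ``only if'' halves you restrict to the slice $w=\bar w$, where exactly $S_{\rm Lag}(\bar w,v,u)=(\bar f+N_\Theta)^{-1}(v,-u)$. The assumed property therefore passes through an isometry to $\bar f+N_\Theta$. It then passes to $h+N_\Theta$ by the perturbation theorems, since $h-\bar f$ vanishes at $\bar z$ and has Lipschitz (resp.\ calmness) modulus $0$ there.

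In effect you reprove the ample-parameterization theorem for the special case where the surjective part of ${\rm D}_pf$ comes from additive canonical perturbations. The paper's route is shorter and applies verbatim to any strictly differentiable parametrization with onto ${\rm D}_pf$. Yours is more elementary and shows exactly where $(v,u)$ are used. It also makes visible that the ``if'' directions need no ampleness at all. And the ``only if'' directions need no assumption on how $f$ depends on $w$, only (strict) differentiability of $f(\bar p,\cdot)$ at $\bar z$.
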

{\bf Proof}. Define
$$
f(p,x,y)=\left(
\begin{array}{c}
\textbf{L}(x,y;p)\\[4pt]
-\textbf{F}(x;w)+u
\end{array}
\right) \mbox{ and } h(x,y)=\left(
\begin{array}{c}
\bar q_1 +{\cal J}_x\textbf{L}(\bar x,\bar y;\bar p)x+{\rm D}_y\textbf{L}(\bar x,\bar y;\bar p)y\\[4pt]
\bar q_2-{\rm D}_x\textbf{F}(\bar x;\bar w,0)x
\end{array}
\right).
$$
   Noting that, for $\nu=1,\ldots, N$, $f_{\nu}: \mathbb R^n \times \mathbb R^d\rightarrow \mathbb R$ and $F_{\nu}: \mathbb R^n \times \mathbb R^d\rightarrow {\cal Y}_{\nu}$ be  twice differentiable with respect to $x$
  in a neighborhood of $(\bar x,\bar w)$ and $f_{\nu}, F_{\nu}$,  $\nabla_x f_{\nu}$ and  ${\rm D}_xF_{\nu}$  are strictly differentiable with respect to $(x,w)$
   in this neighborhood. It is easy to obtain
   $$
   {\rm D}_pf(\bar p,\bar x,\bar y)=\left[
   \begin{array}{lll}
   {\rm D}_w \textbf{L}(\bar x,\bar y;\bar w,0,0) & - I_n & 0\\[4pt]
  - {\rm D}_w\textbf{F}(\bar x; \bar w) & 0 & {\cal I}_{{\cal Y}}
      \end{array}
   \right],
   $$
   which is obviously an onto operator. Noting that
$$
S_{{\rm Lag}}(p)=\{(x,y): 0\in f(p,x,y)+N_{X \times K^{\circ}}(x,y)\} \mbox{ and } L_{{\rm Lag}}(\delta)=\{(x,y): \delta \in
h(x,y)+N_{X \times K^{\circ}}(x,y)\}.
$$
It follows from \cite[Theorem 3E.6]{DRockafellar2009}) that $(x,y) \rightarrow h(x,y)+N_{X \times K^{\circ}}(x,y)$ is is   metrically regular at  $0$ for
$(\bar x,\bar y)$ if
and only if its inverse $L_{{\rm Lag}}$ has Aubin property at $0$ for $(\bar x,\bar y)$. In view of \cite[Theorem 3F.9]{DRockafellar2009}), we have that $L_{{\rm Lag}}$ has Aubin property at $0$ for $(\bar x,\bar y)$ if and only if  $S_{{\rm Lag}}$ has Aubin property at $\bar p=(\bar w,0,0)$ for $(\bar x,\bar y)$.

It follows from \cite[Theorem 3H.3]{DRockafellar2009}) that $(x,y) \rightarrow h(x,y)+N_{X \times K^{\circ}}(x,y)$ is metrically subregular at $0$ for
$(\bar x,\bar y)$ if
and only if its inverse $L_{{\rm Lag}}$ calm  at $0$ for $(\bar x,\bar y)$. In view of \cite[Theorem 3I.13]{DRockafellar2009}), we have that  $L_{{\rm Lag}}$ has has the isolated calmness property at $0$ for $(\bar x,\bar y)$ if and only if  $S_{{\rm Lag}}$ has the isolated calmness property at $\bar p=(\bar w,0,0)$ for $(\bar x,\bar y)$.
The proof is completed. \hfill $\Box$
 \begin{corollary}\label{Poly-IC}
Suppose conditions in Theorem \ref{ThIC} are satisfied and $\Theta=X \times K^{\circ}$ is polyhedral convex set, then $S_{{\rm Lag}}$ has the isolated calmness property  at $\bar p=(\bar w,0,0)$ for $(\bar x,\bar y)$ if and only if $(\bar x,\bar y)$ is
an isolated point of $L_{{\rm Lag}}(0)$.
\end{corollary}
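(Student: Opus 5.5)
The plan is to reduce everything to the linearized mapping through Proposition \ref{Aubin-Calm}(b), and then use the polyhedral structure of $\Theta=X\times K^{\circ}$. The conditions of Theorem \ref{ThIC} include the smoothness hypotheses of Proposition \ref{Aubin-Calm}. Under them, $S_{\rm Lag}$ has the isolated calmness property at $\bar p$ for $(\bar x,\bar y)$ exactly when $L_{\rm Lag}$ has it at $0$ for $(\bar x,\bar y)$. So it suffices to prove the following: $L_{\rm Lag}$ is isolated calm at $0$ for $(\bar x,\bar y)$ if and only if $(\bar x,\bar y)$ is an isolated point of $L_{\rm Lag}(0)$.

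The ``only if'' direction is immediate from Definition \ref{Isolated calmness} with $x=0$. Taking $\delta=0$ there gives $L_{\rm Lag}(0)\cap W\subseteq\{(\bar x,\bar y)\}$, so $(\bar x,\bar y)$ is isolated in $L_{\rm Lag}(0)$.

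For the ``if'' direction I will use polyhedrality. Write $z=(x,y)$ and let $M$ be the affine map $z\mapsto (\bar q_1+\textbf{G}x+{\cal B}y,\ \bar q_2-{\cal A}x)$. Then $\mathrm{gph}\,L_{\rm Lag}=\{(\delta,z): \delta-M(z)\in N_\Theta(z)\}$. For a polyhedral convex set $\Theta$, the graph of $N_\Theta$ is a finite union of polyhedral convex sets; indexing by faces $F$ of $\Theta$, it is $\bigcup_F F\times \bigl(\text{normal cone of }\Theta\text{ on the relative interior of }F\bigr)$ up to closure. Hence $\mathrm{gph}\,L_{\rm Lag}$ is a finite union of polyhedra, so $L_{\rm Lag}$ is a polyhedral multifunction. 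Robinson's theorem (\cite[Theorem 3D.1]{DRockafellar2009}) then says $L_{\rm Lag}$ is outer Lipschitz continuous at every point, and in particular at $0$. That is, there are $\kappa>0$ and a neighborhood $V$ of $0$ with $L_{\rm Lag}(\delta)\subseteq L_{\rm Lag}(0)+\kappa\|\delta\|\textbf{B}$ for $\delta\in V$.

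It remains to turn this into isolated calmness. Since $(\bar x,\bar y)$ is isolated in $L_{\rm Lag}(0)$, choose a ball $W=\{z:\|z-(\bar x,\bar y)\|<r\}$ with $L_{\rm Lag}(0)\cap \mathrm{cl}\,W'=\{(\bar x,\bar y)\}$, where $W'$ is the ball of radius $3r$. Shrink $V$ so that $\kappa\|\delta\|<r$ for $\delta\in V$.

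Now take $z\in L_{\rm Lag}(\delta)\cap W$. Outer Lipschitz continuity gives a point $z'\in L_{\rm Lag}(0)$ with $\|z-z'\|\le\kappa\|\delta\|<r$. Then $\|z'-(\bar x,\bar y)\|<2r$, so $z'\in W'$ and therefore $z'=(\bar x,\bar y)$. This yields $\|z-(\bar x,\bar y)\|\le\kappa\|\delta\|$, which is isolated calmness of $L_{\rm Lag}$ at $0$ for $(\bar x,\bar y)$.

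Combining this with Proposition \ref{Aubin-Calm}(b) completes the argument. The main point needing care is the claim that $L_{\rm Lag}$ is polyhedral, namely that the graph of $N_\Theta$ is a finite union of polyhedra when $\Theta$ is polyhedral. I would justify this via the finitely many faces of $\Theta$ and cite \cite[Section 3D]{DRockafellar2009}. The rest is routine.
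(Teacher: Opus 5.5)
Your proposal is correct and follows the paper's route: the paper's proof just combines Proposition \ref{Aubin-Calm}(b) with Proposition 3I.1 of Dontchev--Rockafellar (isolated calmness of polyhedral mappings). Your argument, using Robinson's outer Lipschitz theorem (Theorem 3D.1) for the polyhedral graph of $L_{\rm Lag}$ together with isolatedness of $(\bar x,\bar y)$ in $L_{\rm Lag}(0)$, is essentially that cited proposition written out in full.
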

{\bf Proof}. The result comes from Proposition \ref{Aubin-Calm} and \cite[Proposition 3I.1]{DRockafellar2009}).\hfill $\Box$

From Proposition \ref{single-valued-lip} and Proposition \ref{Aubin-Calm}, we may obtain the stability properties for $S_{{\rm Lag}}$ by analyzing the corresponding stability properties of $L_{{\rm Lag}}$ under the conditions of these propositions.
In view of the definitions of $L_{{\rm Lag}}$ and $\textbf{G}$, ${\cal B}$ and ${\cal A}$,  $L_{{\rm Lag}}$ can be  expressed as
$$
L_{{\rm Lag}}(\delta)=\left\{(x,y)\in \mathbb R^n \times {\cal Y}:
\delta \in \bar q +\left[\begin{array}{ll}
\textbf{G} &  {\cal B}\\[4pt]
-{\cal A} & 0
\end{array}
\right]\left(
\begin{array}{l}
x\\[4pt]
y
\end{array}
\right)+N_{X \times K^{\circ}}(x,y)
\right\}.
$$
Let us further denote $z=(x,y)$, $\Theta=X \times K^{\circ}$
 and
$$
\textbf{M}=\left[\begin{array}{ll}
\textbf{G} &  {\cal B}\\[4pt]
-{\cal A} & 0
\end{array}
\right].
$$
 Then $L_{{\rm Lag}}$ is expressed as of the following compact form
 \begin{equation}\label{Lcomp}
 L_{{\rm Lag}}(\delta)=\{z\in \mathbb R^n \times {\cal Y}:
\delta \in \bar q +\textbf{M}z+N_{\Theta}(z)\}.
 \end{equation}
 \begin{proposition}\label{gders}
 For the set-valued mapping defined by (\ref{Lcomp}), one has that  $L_{{\rm Lag}}$ has Aubin property at $0$ for $\bar z=(\bar x,\bar y)$ if and only if
 \begin{equation}\label{eq:c1}
 \textbf{M}^*w \in {\rm D}^*N_{\Theta}(\bar z\,| -\bar q-\textbf{M}\bar z)(-w)\Longrightarrow w=0.
 \end{equation}
  \end{proposition}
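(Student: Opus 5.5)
The plan is to invoke the Mordukhovich criterion, \cite[Theorem 9.40]{RW98}, for the Aubin property of the closed-graph mapping $L_{{\rm Lag}}$. Write $L_{{\rm Lag}}=\Phi^{-1}$ with $\Phi(z)=\bar q+\textbf{M}z+N_{\Theta}(z)$; $\Phi$ has closed graph because $\Theta$ is closed and convex. The Aubin property of $L_{{\rm Lag}}$ at $0$ for $\bar z$ holds if and only if ${\rm D}^*L_{{\rm Lag}}(0\,|\,\bar z)(0)=\{0\}$. Since ${\rm gph}\,L_{{\rm Lag}}$ is the flip of ${\rm gph}\,\Phi$, the normal cones correspond under the swap. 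This gives the equivalent condition ${\rm D}^*\Phi(\bar z\,|\,0)^{-1}(0)=\{0\}$, that is,
$$
0\in {\rm D}^*\Phi(\bar z\,|\,0)(w)\Longrightarrow w=0 ,
$$
which is exactly metric regularity of $\Phi$ at $\bar z$ for $0$.

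The second step computes ${\rm D}^*\Phi$ using the sum rule for a smooth single-valued mapping plus a set-valued one, \cite[Exercise 10.43]{RW98}. Since $z\mapsto \bar q+\textbf{M}z$ is linear, we get
$$
{\rm D}^*\Phi(\bar z\,|\,0)(w)=\textbf{M}^*w+{\rm D}^*N_{\Theta}(\bar z\,|\,-\bar q-\textbf{M}\bar z)(w).
$$
Here $0\in\Phi(\bar z)$ means $-\bar q-\textbf{M}\bar z\in N_\Theta(\bar z)$, so the base point is correct. The identity is exact, not just an inclusion, because ${\rm gph}\,\Phi$ is the image of ${\rm gph}\,N_\Theta$ under the linear isomorphism $(z,\xi)\mapsto(z,\xi+\bar q+\textbf{M}z)$. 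Computing the normal cone under this isomorphism directly will also fix the sign conventions.

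The condition $0\in {\rm D}^*\Phi(\bar z\,|\,0)(w)$ then becomes $-\textbf{M}^*w\in {\rm D}^*N_\Theta(\bar z\,|\,-\bar q-\textbf{M}\bar z)(w)$. To reach the form (\ref{eq:c1}), replace $w$ by $-w$; the implication ``$\Rightarrow w=0$'' is invariant under this replacement. This yields $\textbf{M}^*w\in {\rm D}^*N_\Theta(\bar z\,|\,-\bar q-\textbf{M}\bar z)(-w)\Rightarrow w=0$, as claimed.

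The main obstacle is bookkeeping. I need to get the transposition and sign in the normal cone to the graph under the linear change of variables exactly right, and to check the convention that ${\rm D}^*$ is defined via $(v,-w)\in N_{{\rm gph}}$. A sign slip here would produce $-\textbf{M}^*w$ paired with $w$ rather than the stated pairing. I will therefore verify carefully that $N_{{\rm gph}\Phi}(\bar z,0)=\{(v-\textbf{M}^*\eta,\eta):(v,\eta)\in N_{{\rm gph}N_\Theta}(\bar z,-\bar q-\textbf{M}\bar z)\}$.
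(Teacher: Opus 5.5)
Your argument is correct and is essentially the paper's proof. Both rest on the Mordukhovich criterion and on the fact that ${\rm gph}\,L_{\rm Lag}$ is the image of ${\rm gph}\,N_\Theta$ under an affine isomorphism, so limiting normal cones transform exactly; the paper packages the flip and the shear into the single operator $\textbf{T}(\delta,z)=(z,\delta-\textbf{M}z)$ and computes ${\rm D}^*L_{\rm Lag}$ directly, whereas you pass through $\Phi=L_{\rm Lag}^{-1}$ and the sum rule, and your sign bookkeeping (the formula for $N_{{\rm gph}\,\Phi}(\bar z,0)$ and the final substitution $w\mapsto -w$) checks out.
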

  {\bf Proof}. Let ${\cal Z}=\mathbb R^n \times {\cal Y}$ and define a linear operator $\textbf{T}: {\cal Z} \rightarrow {\cal Z}$ by
  $$
  \textbf{T}(\delta,z)=\left[
  \begin{array}{ll}
  0 & {\cal I}_{{\cal Z}}\\[4pt]
  {\cal I}_{{\cal Z}} &-\textbf{M}
  \end{array}
  \right
  ]\left(
  \begin{array}{l}
  \delta\\[4pt]
  z
  \end{array}
  \right).
  $$
  Then $T$ is nonsingular with
  $$
  \textbf{T}^*=\left[
  \begin{array}{ll}
  0 & {\cal I}_{{\cal Z}}\\[4pt]
  {\cal I}_{{\cal Z}} &-\textbf{M}^*
  \end{array}
  \right
  ]\mbox{ and } \textbf{T}^{-1}=\left[
  \begin{array}{ll}
  \textbf{M} & {\cal I}_{{\cal Z}}\\[4pt]
  {\cal I}_{{\cal Z}} &0
  \end{array}
  \right
  ].
  $$
  We may express the graph as
  $$
  {\rm gph}\,L_{{\rm Lag}}=\{(\delta, z): (z,\delta-\bar q-\textbf{M}z)\in {\rm gph}\, N_{\Theta}\}=\{(\delta, z):\textbf{T}(\delta, z)+b\in {\rm gph}\, N_{\Theta}\},
  $$
  where $b=(0;-\bar q)\in {\cal Z}$.
  Since $\textbf{T}$ is invertible, we have
  $$
  N_{{\rm gph}\,L_{{\rm Lag}}}(\delta,z)=\textbf{T}^*N_{{\rm gph}\,N_{\Theta}}(\textbf{T}(\delta,z)+b).
  $$
  From the definition of coderivative, we have
  $$
  \begin{array}{ll}
  w\in D^*L_{{\rm Lag}}(\delta\,|z)(v)& \Longleftrightarrow (w,-v)\in  N_{{\rm gph}\,L_{{\rm Lag}}}(\delta,z)\\[4pt]
  &\Longleftrightarrow (w,-v)\in\textbf{T}^*N_{{\rm gph}\,N_{\Theta}}(\textbf{T}(\delta,z)+b)\\[4pt]
  &\Longleftrightarrow \textbf{T}^{*-1}(w,-v) \in  N_{{\rm gph}\,N_{\Theta}}(\textbf{T}(\delta,z)+b)\\[4pt]
  &  \Longleftrightarrow
  \left[
  \begin{array}{ll}
  \textbf{M}^* & {\cal I}_{{\cal Z}}\\[4pt]
   {\cal I}_{{\cal Z}} & 0
   \end{array}
   \right]
  \left(
  \begin{array}{l}
  w\\[4pt]
  -v
  \end{array}
  \right) \in N_{{\rm gph}\,N_{\Theta}}(z,\delta-\bar q-\textbf{M}z)\\[4pt]
  &  \Longleftrightarrow (\textbf{M}^*w-v, w) \in N_{{\rm gph}\,N_{\Theta}}(z,\delta-\bar q-\textbf{M}z)\\[4pt]
   &  \Longleftrightarrow \textbf{M}^*w-v \in D^*N_{\Theta}(z\,|\delta-\bar q-\textbf{M}z)(-w).
  \end{array}
  $$
  It follows from Mordukhovich criterion in \cite[Theorem 9.40]{RW98} that $L_{{\rm Lag}}$ has the
Aubin property at $0$ for $\bar z=(\bar x,\bar y)$ if and only if
$$
 D^*L_{{\rm Lag}}(0\,|\bar z)(0)=\{0\}.
$$
  Thus, $L_{{\rm Lag}}$ has the
Aubin property at $0$ for $\bar z=(\bar x,\bar y)$ if and only if
$$
\textbf{M}^*w \in D^*N_{\Theta}(\bar z\,|-\bar q-\textbf{M}\bar z)(-w) \Longrightarrow w=0.
$$
  The proof is completed. \hfill $\Box$

We can now establish a necessary and sufficient condition for the Aubin property of
$S_{{\rm Lag}}$, formulated in terms of the coderivatives of the normal cone mappings
$N_X$ and $N_{K^{\circ}}$.
  \begin{theorem}\label{ThAubin}
  Suppose $(\bar x,\bar y)\in S_{{\rm Lag}}(\bar p)$. Let $f_{\nu}: \mathbb R^n \times \mathbb R^d\rightarrow \mathbb R$ and $F_{\nu}: \mathbb R^n \times \mathbb R^d\rightarrow {\cal Y}_{\nu}$ be  twice differentiable with respect to $x$
  in a neighborhood of $(\bar x,\bar w)$. Suppose that $f_{\nu}, F_{\nu}$,  $\nabla_x f_{\nu}$ and  ${\rm D}_xF_{\nu}$  are strictly differentiable with respect to $(x,w)$. Then
  $S_{{\rm Lag}}$ has Aubin property at $\bar p=(\bar w,0,0)$ for $(\bar x,\bar y)$ if and only if
  \begin{equation}\label{GNEP-Aubin}
  \left.
  \begin{array}{ll}
  \textbf{G}^*w_1-{\cal A}^*w_2 & \in D^*N_X(\bar x\,|-\bar q_1-\textbf{G}\bar x-{\cal B}\bar y)(-w_1)\\[4pt]
  {\cal B}^*w_1 \quad \quad \quad &\in D^*N_{K^{\circ}}(\bar y\,|-\bar q_2+{\cal A}\bar x)(-w_2)
  \end{array}
  \right\} \Longrightarrow w_1=0,w_2=0.
  \end{equation}
  \end{theorem}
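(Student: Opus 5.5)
The plan is to combine Proposition \ref{Aubin-Calm}(a) with Proposition \ref{gders}, and then split the coderivative of $N_\Theta$ along the product structure $\Theta=X\times K^{\circ}$.

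\emph{Step 1: reduction to $L_{\rm Lag}$.} Under the stated strict differentiability assumptions, Proposition \ref{Aubin-Calm}(a) shows that $S_{\rm Lag}$ has the Aubin property at $\bar p$ for $(\bar x,\bar y)$ if and only if $L_{\rm Lag}$ has it at $0$ for $\bar z=(\bar x,\bar y)$. Proposition \ref{gders} then turns the latter into condition (\ref{eq:c1}):
$$
\textbf{M}^*w\in D^*N_\Theta(\bar z\,|-\bar q-\textbf{M}\bar z)(-w)\ \Longrightarrow\ w=0 .
$$
Write $w=(w_1,w_2)\in\mathbb R^n\times{\cal Y}$. By the block form of $\textbf{M}$,
$$
\textbf{M}^*w=(\textbf{G}^*w_1-{\cal A}^*w_2,\ {\cal B}^*w_1).
$$
The reference point is
$$
-\bar q-\textbf{M}\bar z=(-\bar q_1-\textbf{G}\bar x-{\cal B}\bar y,\ -\bar q_2+{\cal A}\bar x),
$$
where $\bar q=(\bar q_1;\bar q_2)$ is taken from (\ref{eq:qs}).

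\emph{Step 2: product rule for the normal cone.} Since $\Theta=X\times K^{\circ}$ is a product of closed convex sets, $N_\Theta(x,y)=N_X(x)\times N_{K^{\circ}}(y)$. Hence ${\rm gph}\,N_\Theta$ is, after swapping coordinates, the product ${\rm gph}\,N_X\times{\rm gph}\,N_{K^{\circ}}$. The limiting normal cone to a product of closed sets equals the product of the limiting normal cones (\cite[Proposition 6.41]{RW98}). Applying the same coordinate permutation to the normals gives
$$
D^*N_\Theta((x,y)\,|(a,b))(-w_1,-w_2)=D^*N_X(x\,|a)(-w_1)\times D^*N_{K^{\circ}}(y\,|b)(-w_2).
$$
Substituting into (\ref{eq:c1}) gives exactly the two inclusions of (\ref{GNEP-Aubin}), and the conclusion $w=0$ becomes $w_1=0,\ w_2=0$.

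\emph{Main obstacle.} The only delicate point is the bookkeeping in Step 2. The permutation $((x,y),(a,b))\mapsto((x,a),(y,b))$ is a linear isometry, so it maps normal cones to normal cones by the same permutation. One must check that the sign convention $(w,-v)\in N_{\rm gph}$ in the definition of the coderivative is preserved blockwise. It is, because the permutation acts identically on the primal and dual components.

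One should also confirm that the reference points in (\ref{GNEP-Aubin}) are indeed the components of $-\bar q-\textbf{M}\bar z$. They satisfy $-\bar q_1-\textbf{G}\bar x-{\cal B}\bar y=-\textbf{l}(\bar x,\bar y)\in N_X(\bar x)$ and $-\bar q_2+{\cal A}\bar x=\textbf{g}(\bar x)\in N_{K^{\circ}}(\bar y)$, because $(\bar x,\bar y)\in S_{\rm Lag}(\bar p)$. So the coderivatives are evaluated at points of the graphs, and the equivalence follows.
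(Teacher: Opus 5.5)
Your proposal is correct and follows the paper's proof. Like the paper, you combine Proposition~\ref{Aubin-Calm}(a) with Proposition~\ref{gders}, then split condition~(\ref{eq:c1}) blockwise using the form of $\textbf{M}^*$ and $\Theta=X\times K^{\circ}$; your appeal to the product rule for limiting normal cones \cite[Proposition 6.41]{RW98}, the coordinate permutation, and the check that the reference points lie on the graphs supply details the paper leaves implicit.
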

{\bf Proof}. It follows from Proposition \ref{Aubin-Calm} and Proposition \ref{gders} that
$S_{{\rm Lag}}$ has Aubin property at $\bar p=(\bar w,0,0)$ for $(\bar x,\bar y)$ if and only if the implication (\ref{eq:c1}) holds. From
the definition of $\textbf{M}$, the expression of $\textbf{M}^*$ and $\Theta=X \times K^{\circ}$, we obtain for $w=(w_1,w_2)$ with $w_1 \in \mathbb R^n$ and $w_2 \in {\cal Y}$, the implication (\ref{eq:c1}) is equivalent to (\ref{GNEP-Aubin}). The proof is completed. \hfill $\Box$

When $X$ and $K^{\circ}$ are polyhedral, we may obtain the following interesting result about the Lipschitz continuous single-valued localization of $S_{{\rm Lag}}$.
\begin{corollary}\label{corSRegularity}

Suppose conditions in Theorem \ref{ThAubin} are satisfied and $\Theta=X \times K^{\circ}$ is polyhedral, then $S_{{\rm Lag}}$ has a Lipschitz continuous single-valued localization at $\bar p=(\bar w,0,0)$ for $(\bar x,\bar y)$ if and only if the implication (\ref{GNEP-Aubin}) holds.
\end{corollary}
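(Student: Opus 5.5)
The plan is to combine Theorem \ref{ThAubin} with the classical fact that, for a linear variational inequality over a polyhedral convex set, metric regularity and strong metric regularity coincide (Dontchev--Rockafellar; see \cite[Theorem 4H.1 / Corollary 3G.?]{DRockafellar2009}, the result stating that for $z\mapsto \bar q+\textbf{M}z+N_{\Theta}(z)$ with $\Theta$ polyhedral, the inverse has the Aubin property at $0$ for $\bar z$ if and only if it has a Lipschitz continuous single-valued localization there).

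First I would handle the ``if'' direction. Assume the implication (\ref{GNEP-Aubin}) holds. Under the hypotheses of Theorem \ref{ThAubin}, Proposition \ref{gders} together with the block computation of $\textbf{M}^*$ given in the proof of Theorem \ref{ThAubin} shows that (\ref{GNEP-Aubin}) is exactly condition (\ref{eq:c1}). Hence $L_{{\rm Lag}}$ has the Aubin property at $0$ for $\bar z=(\bar x,\bar y)$, i.e. $h+N_\Theta$ is metrically regular at $\bar z$ for $0$. Since $\Theta=X\times K^{\circ}$ is polyhedral and $h$ is affine, the cited equivalence upgrades this to strong metric regularity, i.e. $L_{{\rm Lag}}$ has a Lipschitz continuous single-valued localization at $0$ for $\bar z$. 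Then Proposition \ref{single-valued-lip} transfers this to $S_{{\rm Lag}}$ at $\bar p$. I need the hypotheses of Proposition \ref{single-valued-lip}; these follow from strict differentiability in $(x,w)$ of $f_\nu,F_\nu,\nabla_x f_\nu,{\rm D}_xF_\nu$, which gives continuity of the first and second $x$-derivatives at the point and local Lipschitz dependence on $w$ uniformly in $x$ near $\bar x$. Alternatively, and more cleanly, I would apply \cite[Theorem 3G.4]{DRockafellar2009} directly, as in that proof, using strict differentiability to get the strict first-order approximation.

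For the ``only if'' direction, a Lipschitz continuous single-valued localization of $S_{{\rm Lag}}$ at $\bar p$ for $(\bar x,\bar y)$ in particular yields the Aubin property of $S_{{\rm Lag}}$ there: the localization is Lipschitz, so $S_{\rm Lag}(p)\cap V=\{s(p)\}\subset S_{\rm Lag}(p')+L\|p-p'\|\textbf{B}$. Theorem \ref{ThAubin} then gives (\ref{GNEP-Aubin}).

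The main obstacle is the polyhedral step, namely justifying that Aubin implies a single-valued localization for $L_{{\rm Lag}}$. If a direct citation is unavailable, I would argue via Robinson's theory: for polyhedral $\Theta$ the map $N_\Theta$ has locally polyhedral graph, so near $\bar z$ the problem reduces to the affine variational inequality on the critical cone $T_\Theta(\bar z)\cap(-\bar q-\textbf{M}\bar z)^{\perp}$. For such piecewise-affine maps, openness (metric regularity) forces local injectivity, by the coherent orientation / degree argument of Robinson and Dontchev--Rockafellar. This yields single-valuedness, and piecewise linearity then gives Lipschitz continuity.
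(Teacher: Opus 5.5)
Your proof is correct and is essentially the paper's: (\ref{GNEP-Aubin}) is equivalent to the Aubin property of $L_{\rm Lag}$ via Proposition \ref{gders}, polyhedrality of $\Theta$ upgrades this to a Lipschitz continuous single-valued localization of $L_{\rm Lag}$ \cite[Theorem 1]{DRockafellar96}, and that localization then transfers to $S_{\rm Lag}$. The one difference is the ``only if'' direction: you use the trivial implication (localization $\Rightarrow$ Aubin property) followed by Theorem \ref{ThAubin}, whereas the paper invokes the two-sided transfer of localizations between $L_{\rm Lag}$ and $S_{\rm Lag}$ from \cite[Proposition 2]{DRockafellar96}; both routes rely on the ampleness of the canonical perturbation.
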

{\bf Proof}. It follows from \cite[Theroem 1]{DRockafellar96} that, when $\Theta=X \times K^{\circ}$ is polyhedral, $L_{{\rm Lag}}$ has a Lipschitz continuous single-valued localization at $\bar p=(\bar w,0,0)$ if and only if $L_{{\rm Lag}}$ has Aubin property at $0$ for $\bar z=(\bar x,\bar y)$. We obtain from \cite[Proposition 2]{DRockafellar96} that $L_{{\rm Lag}}$ has a Lipschitz continuous single-valued localization at $\bar p=(\bar w,0,0)$ if and only if $S_{{\rm Lag}}$ has a Lipschitz continuous single-valued localization at $\bar p=(\bar w,0,0)$ for $(\bar x,\bar y)$. The result comes from the fact that $L_{{\rm Lag}}$ has Aubin property at $0$ for $\bar z=(\bar x,\bar y)$ if and only if the implication (\ref{GNEP-Aubin}) holds. The proof is completed. \hfill $\Box$

We now turn to analyzing the isolated calmness of
$S_{{\rm Lag}}$. To do so, we recall the expression for
$L_{{\rm Lag}}$ previously given in (\ref{Lcomp}).
\begin{proposition}\label{gders1}
 For the set-valued mapping defined by (\ref{Lcomp}), one has that  $L_{{\rm Lag}}$ has the isolated calmness property at $0$ for $\bar z=(\bar x,\bar y)$ if and only if
 \begin{equation}\label{eq:c2}
 -\textbf{M}d_z \in DN_{\Theta}(z\,|-\bar q-\textbf{M}\bar z)(d_z) \Longrightarrow d_z=0.
 \end{equation}
  \end{proposition}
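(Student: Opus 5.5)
The plan mirrors the proof of Proposition \ref{gders}, with the graphical derivative in place of the coderivative and the Levy--Rockafellar criterion in place of the Mordukhovich criterion. By \cite[Theorem 4E.1]{DRockafellar2009} (equivalently \cite[Theorem 3I.?]{DRockafellar2009} in the form for closed-graph mappings), a mapping $S$ with closed graph has the isolated calmness property at $\bar\delta$ for $\bar z$ if and only if $DS(\bar\delta\,|\,\bar z)(0)=\{0\}$. Since ${\rm gph}\,L_{{\rm Lag}}$ is closed (it is a linear image of the closed set ${\rm gph}\,N_\Theta$), it suffices to compute $DL_{{\rm Lag}}(0\,|\,\bar z)(0)$.

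To compute it, I will reuse the representation from the proof of Proposition \ref{gders}:
$$
{\rm gph}\,L_{{\rm Lag}}=\{(\delta,z):\textbf{T}(\delta,z)+b\in{\rm gph}\,N_\Theta\},
$$
where $\textbf{T}$ is the nonsingular linear operator defined there and $b=(0;-\bar q)$. Tangent cones transform under invertible affine maps by the linear part, so
$$
T_{{\rm gph}\,L_{{\rm Lag}}}(0,\bar z)=\textbf{T}^{-1}T_{{\rm gph}\,N_\Theta}(\textbf{T}(0,\bar z)+b),
\qquad \textbf{T}(0,\bar z)+b=(\bar z,-\bar q-\textbf{M}\bar z).
$$
Hence $(d_\delta,d_z)\in T_{{\rm gph}\,L_{{\rm Lag}}}(0,\bar z)$ if and only if $(d_z,\,d_\delta-\textbf{M}d_z)\in T_{{\rm gph}\,N_\Theta}(\bar z,-\bar q-\textbf{M}\bar z)$, that is,
$$
d_\delta-\textbf{M}d_z\in DN_\Theta(\bar z\,|\,-\bar q-\textbf{M}\bar z)(d_z).
$$
By the definition of the graphical derivative, $d_z\in DL_{{\rm Lag}}(0\,|\,\bar z)(d_\delta)$ exactly when $(d_\delta,d_z)$ lies in this tangent cone. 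Setting $d_\delta=0$ gives
$$
DL_{{\rm Lag}}(0\,|\,\bar z)(0)=\{d_z:\ -\textbf{M}d_z\in DN_\Theta(\bar z\,|\,-\bar q-\textbf{M}\bar z)(d_z)\}.
$$
Requiring this set to be $\{0\}$ is precisely the implication (\ref{eq:c2}); the $z$ written in (\ref{eq:c2}) is read as $\bar z$.

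The only delicate point is checking that the tangent cone transforms exactly, with equality rather than a mere inclusion, under the affine bijection $(\delta,z)\mapsto\textbf{T}(\delta,z)+b$. This holds because bijective linear maps are homeomorphisms, so sequences $t_k\downarrow 0$ with $(\delta_k,z_k)\to(0,\bar z)$ and difference quotients converging are mapped to sequences of the same kind in both directions. The rest is bookkeeping with the block form of $\textbf{T}^{-1}$.
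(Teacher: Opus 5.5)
Your proposal is correct and follows essentially the same route as the paper: the same affine bijection $(\delta,z)\mapsto\textbf{T}(\delta,z)+b$ transports the tangent cone to ${\rm gph}\,N_\Theta$ onto the tangent cone to ${\rm gph}\,L_{{\rm Lag}}$, and the graphical-derivative criterion for isolated calmness, $DL_{{\rm Lag}}(0\,|\,\bar z)(0)=\{0\}$, turns this into the implication (\ref{eq:c2}). Your write-up is slightly cleaner than the paper's: you explicitly check that the graph is closed (the hypothesis of that criterion), you state the tangent-cone transformation rule correctly, and you read the $z$ in (\ref{eq:c2}) as $\bar z$.
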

{\bf Proof}.  Let ${\cal Z}=\mathbb R^n \times {\cal Y}$ and define a linear operator $\textbf{T}: {\cal Z} \rightarrow {\cal Z}$ by
  $$
  \textbf{T}=\left[
  \begin{array}{ll}
  0 & {\cal I}_{{\cal Z}}\\[4pt]
  {\cal I}_{{\cal Z}} &-\textbf{M}
  \end{array}
  \right
  ]
  $$
  Then $\textbf{T}$ is nonsingular and we may express the graph as
  $$
  {\rm gph}\,L_{{\rm Lag}}=\{(\delta, z): (z,\delta-\bar q-\textbf{M}z)\in {\rm gph}\, N_{\Theta}\}=\{(\delta, z):\textbf{T}(\delta, z)+b\in {\rm gph}\, N_{\Theta}\}
  $$
  with $b=(0;-\bar q) \in {\cal Z}$.
  Since $T$ is invertible, we have
  $$
  T_{{\rm gph}\,L_{{\rm Lag}}}(\delta,z)=\{(d_{\delta}, d_z):\textbf{T}(d_{\delta}, d_z)\in T_{{\rm gph}\,N_{\Theta}}(\textbf{T}(\delta,z)+b)\}.
  $$
  From the definition of graph derivative, we have
  $$
  \begin{array}{ll}
  d_z\in DL_{{\rm Lag}}(\delta\,|z)(d_{\delta})& \Longleftrightarrow \textbf{T}(d_{\delta}, d_z)\in  T_{{\rm gph}\,L_{{\rm Lag}}}(\delta,z)\\[4pt]
    &  \Longleftrightarrow
  \left[
  \begin{array}{ll}
  0 & {\cal I}_{{\cal Z}}\\[4pt]
   {\cal I}_{{\cal Z}} & -\textbf{M}
   \end{array}
   \right]
  \left(
  \begin{array}{l}
  d_{\delta}\\[4pt]
  d_z
  \end{array}
  \right) \in T_{{\rm gph}\,N_{\Theta}}(z,\delta-\bar q-\textbf{M}z)\\[4pt]
  &  \Longleftrightarrow (d_z,d_{\delta}-\textbf{M}d_z) \in N_{{\rm gph}\,T_{\Theta}}(z,\delta-\bar q-\textbf{M}z)\\[4pt]
   &  \Longleftrightarrow d_{\delta}-\textbf{M}d_z \in DN_{\Theta}(z\,|\delta-\bar q-\textbf{M}z)(d_z).
  \end{array}
  $$
  It follows from  
  the criterion characterizing isolated calmness  property of a set-valued mapping from  \cite{Rock92} and \cite{levy96},  
   that $L_{{\rm Lag}}$ has the
isolated calmness property  at $0$ for $\bar z=(\bar x,\bar y)$ if and only if
$$
 DL_{{\rm Lag}}(0\,|\bar z)(0)=\{0\}.
$$
  Thus, $L_{{\rm Lag}}$ has has the
isolated calmness property at $0$ for $\bar z=(\bar x,\bar y)$ if and only if
$$
-\textbf{M}d_z \in DN_{\Theta}(z\,|-\bar q-\textbf{M}\bar z)(d_z) \Longrightarrow d_z=0.
$$
  The proof is completed. \hfill $\Box$

Below  we  provide a necessary and sufficient condition of the isolated calmness property of $S_{{\rm Lag}}$ in terms of graph derivatives of normal cone  mappings $N_X$ and $N_{K^{\circ}}$.
   \begin{theorem}\label{ThIC}
  Suppose $(\bar x,\bar y)\in S_{{\rm Lag}}(\bar p)$. Let $f_{\nu}: \mathbb R^n \times \mathbb R^d\rightarrow \mathbb R$ and $F_{\nu}: \mathbb R^n \times \mathbb R^d\rightarrow {\cal Y}_{\nu}$ be  twice differentiable with respect to $x$
  in a neighborhood of $(\bar x,\bar w)$. Suppose that $f_{\nu}, F_{\nu}$,  $\nabla_x f_{\nu}$ and  ${\rm D}_xF_{\nu}$  are strictly differentiable with respect to $(x,w)$. Then
  $S_{{\rm Lag}}$ has the isolated calmness property at $\bar p=(\bar w,0,0)$ for $(\bar x,\bar y)$ if and only if
  \begin{equation}\label{GNEP-IC}
  \left.
  \begin{array}{ll}
 - \textbf{G}d_x-{\cal B}d_y & \in DN_X(\bar x\,|-\bar q_1-\textbf{G}\bar x-{\cal B}\bar y)(d_x)\\[4pt]
  {\cal A}d_x \quad \quad \quad &\in DN_{K^{\circ}}(\bar y\,|-\bar q_2+{\cal A}\bar x)(d_y)
  \end{array}
  \right\} \Longrightarrow d_x=0,d_y=0.
  \end{equation}
  \end{theorem}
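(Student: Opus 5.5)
The proof will mirror that of Theorem \ref{ThAubin}, with coderivatives replaced by graph derivatives. By Proposition \ref{Aubin-Calm}(b), whose hypotheses coincide with those of Theorem \ref{ThIC}, $S_{{\rm Lag}}$ has the isolated calmness property at $\bar p=(\bar w,0,0)$ for $(\bar x,\bar y)$ if and only if $L_{{\rm Lag}}$ has it at $0$ for $\bar z=(\bar x,\bar y)$. By Proposition \ref{gders1}, the latter holds if and only if implication (\ref{eq:c2}) holds, namely $-\textbf{M}d_z\in DN_{\Theta}(\bar z\,|-\bar q-\textbf{M}\bar z)(d_z)\Rightarrow d_z=0$. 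It therefore remains to unpack (\ref{eq:c2}) into the block form (\ref{GNEP-IC}).

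\textbf{Step 1: the base point.} Write $d_z=(d_x,d_y)$ and $\bar q=(\bar q_1;\bar q_2)$. By the definition of $\textbf{M}$,
$$-\bar q-\textbf{M}\bar z=\left(-\bar q_1-\textbf{G}\bar x-{\cal B}\bar y;\ -\bar q_2+{\cal A}\bar x\right),$$
and
$$-\textbf{M}d_z=\left(-\textbf{G}d_x-{\cal B}d_y;\ {\cal A}d_x\right).$$

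\textbf{Step 2: the product rule for $DN_\Theta$.} Since $\Theta=X\times K^{\circ}$, we have $N_{\Theta}(x,y)=N_X(x)\times N_{K^{\circ}}(y)$, so ${\rm gph}\,N_\Theta$ is, up to reordering coordinates, ${\rm gph}\,N_X\times{\rm gph}\,N_{K^{\circ}}$. The inclusion $T_{C_1\times C_2}\subset T_{C_1}\times T_{C_2}$ always holds, which gives
$$DN_\Theta(\bar z\,|\,(\bar v_1,\bar v_2))(d_x,d_y)\subset DN_X(\bar x\,|\,\bar v_1)(d_x)\times DN_{K^{\circ}}(\bar y\,|\,\bar v_2)(d_y).$$
If equality holds, (\ref{eq:c2}) becomes exactly (\ref{GNEP-IC}), and the proof is complete.

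\textbf{Main obstacle.} Equality of the tangent cones to a product is not automatic; it requires one factor to be derivable (geometrically derivable), e.g.\ \cite[Proposition 6.41]{RW98}. The inclusion above only shows that (\ref{GNEP-IC}) implies (\ref{eq:c2}), i.e.\ it gives the sufficiency direction of the theorem.

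For necessity, the plan is to establish derivability of ${\rm gph}\,N_X$ or ${\rm gph}\,N_{K^{\circ}}$, as follows.
\begin{itemize}
\item When $X$ is polyhedral, ${\rm gph}\,N_X$ is a finite union of polyhedral sets and hence derivable.
\item More generally, for a closed convex set $C$, ${\rm gph}\,N_C$ is the image of ${\rm gph}\,\Pi_C$ under an invertible linear map. The projection $\Pi_C$ is Lipschitz and, for the sets arising in the paper, directionally differentiable, which yields $T_{{\rm gph}\,N_C}$ as the graph-image of $\Pi_C'$ and hence derivability.
\end{itemize}

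In the write-up I would state the product formula for $DN_\Theta$ under this derivability, which is standard for the sets considered in the paper, and in any case record explicitly that the sufficiency direction holds unconditionally.
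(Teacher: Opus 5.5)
Your reduction is the same as the paper's. Proposition~\ref{Aubin-Calm}(b) transfers isolated calmness from $S_{\rm Lag}$ to $L_{\rm Lag}$, Proposition~\ref{gders1} turns it into (\ref{eq:c2}), and the base point and $-\textbf{M}d_z$ are unpacked as you do. The paper closes the argument in one line, saying (\ref{eq:c2}) is equivalent to (\ref{GNEP-IC}) ``from the expressions of $\textbf{M}$ and $\Theta=X\times K^{\circ}$''. That silently uses the product rule $DN_\Theta(\bar z\,|\,\bar v)(d_x,d_y)=DN_X(\bar x\,|\,\bar v_1)(d_x)\times DN_{K^\circ}(\bar y\,|\,\bar v_2)(d_y)$. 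Your objection to this step is correct. After reordering, ${\rm gph}\,N_\Theta={\rm gph}\,N_X\times{\rm gph}\,N_{K^\circ}$, and the tangent cone to a product is in general only contained in the product of the tangent cones; equality is guaranteed when one factor is derivable. The inclusion alone gives the ``if'' direction for arbitrary $X$ and $K$, as you say. Theorem~\ref{ThAubin} has no such issue, because limiting normal cones to products split exactly, so $D^*N_\Theta$ does factor.

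The remaining gap is the ``only if'' direction, which you share with the paper. Subsection~\ref{sub-1} allows arbitrary closed convex $X_\nu$ and closed convex cones $K_\nu$, so the claim that derivability is ``standard for the sets considered in the paper'' does not hold there, and derivability can genuinely fail. By the Minty parametrization, $T_{{\rm gph}\,N_C}(x,v)$ consists of all $(a,h-a)$ with $a$ a cluster point of $t^{-1}(\Pi_C(x+v+th)-x)$ as $t\downarrow 0$ (using that $\Pi_C$ is Lipschitz). A tangent vector is attainable along a curve only when this quotient actually converges. Projections onto closed convex sets need not be directionally differentiable (Kruskal's counterexample), so ${\rm gph}\,N_C$ need not be derivable.

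What your argument really proves is this: (\ref{GNEP-IC}) is sufficient in general, and it is also necessary whenever ${\rm gph}\,N_X$ is derivable at $(\bar x,-\textbf{l}(\bar x,\bar y))$ or ${\rm gph}\,N_{K^\circ}$ is derivable at $(\bar y,\textbf{g}(\bar x))$. Note that $-\bar q_1-\textbf{G}\bar x-{\cal B}\bar y=-\textbf{l}(\bar x,\bar y)$ and $-\bar q_2+{\cal A}\bar x=\textbf{g}(\bar x)$. Derivability holds, for instance, when $X$ or $K^\circ$ is polyhedral, or when the relevant projection is directionally differentiable (e.g.\ $C^2$-cone reducible sets). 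That covers every later specialization in the paper, where $X=\mathbb R^n$ and $K^\circ$ is polyhedral. For the theorem as stated, though, you should either add derivability as an explicit hypothesis or present the general result as a sufficient condition only. Proving necessity without it would need an idea beyond the product rule, and neither your plan nor the paper's proof supplies one.
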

  {\bf Proof}. It follows from Proposition \ref{Aubin-Calm} and Proposition \ref{gders1} that
$S_{{\rm Lag}}$ has the isolated calmness  property at $\bar p=(\bar w,0,0)$ for $(\bar x,\bar y)$ if and only if the implication (\ref{eq:c2}) holds. From
the expressions of $\textbf{M}$ and  $\Theta=X \times K^{\circ}$, we obtain for $d_z=(d_x,d_y)$ with $d_x \in \mathbb R^n$ and $d_y \in {\cal Y}$, the implication (\ref{eq:c2}) is equivalent to (\ref{GNEP-IC}). The proof is completed. \hfill $\Box$
\subsection{Equality and inequality constrained GNEPs}
In this subsection, we will apply the general stability results for GNEPs obtained in Subsection \ref{sub-1}
to equality and in equality constrained GNEPs. First of all, we consider the simplest case when all players are only constrained by equalities, namely
\begin{equation}\label{eq:eqc}
C_{\nu}(x^{-\nu})=\{x^{\nu}\in \mathbb R^{n_{\nu}}:g_{\nu}(x^{\nu},x^{-\nu})=0\},
\end{equation}
where $g:\mathbb R^n \rightarrow \mathbb R^{q_{\nu}}$ is twice continuously differentiable.

In this case $S_{{\rm Lag}}(p)$ is be expressed as the following form
\begin{equation}\label{Lag-eqC}
S_{\rm Lag}(p)=\left
\{(x,y)\in \mathbb R^n \times \mathbb R^q:\left(
\begin{array}{c}
\textbf{L}(x,y;p)\\[4pt]
-\textbf{F}(x;w)+u
\end{array}
\right)=0_{n+q}\right\},
\end{equation}
where $\textbf{F}(x;w)=(F_1(x;w),\ldots, F_N(x;w))\in \mathbb R^q$ with $q=q_1+\cdots+q_N$, and $F_{\nu}:\mathbb R^n \times \mathbb R^d \rightarrow \mathbb R^{q_{\nu}}$ satisfies $F_{\nu}(x;\bar w)=g_{\nu}(x)$.

Let $\bar x$ be a local generalized Nash equilibrium point for the GNEP problem when $C_{\nu}(x^{-\nu})$ is defined by (\ref{eq:eqc}). Suppose that
$\theta_{\nu}$ and  $g_{\nu}$  are twice continuously differentiable in a neighborhood of $\bar x$ for each $\nu=1,\ldots,N$. Suppose that ${\cal J}_{x^{\nu}}g_{\nu}(\bar x)$ is of full rank in row for each  $\nu=1,\ldots,N$, then there exists a unique vector $\bar y\in \mathbb R^q$ with
$\bar y=(\bar y^1,\ldots, \bar y^N)$, $\bar y^{\nu}\in \mathbb R^{q_{\nu}}$ such that $(\bar x,\bar y)\in S_{{\rm Lag}}(\bar p)$, where $\bar p=(\bar w,0,0)$.

On the other hand, for  $(\bar x,\bar y)\in S_{{\rm Lag}}(\bar p)$, if for any $d \in \mathbb R^n$,  satisfying
$$
{\cal J}_{x^{\nu}}g_{\nu}(\bar x)d^{\nu}=0, d^{\nu}\ne 0, \nu=1,\ldots, N,
$$
one has that
$$
\langle d^{\nu}, \nabla^2_{x^{\nu}x^{\nu}}l_{\nu}(\bar x,\bar y^{\nu})d^{\nu}\rangle >0, \nu=1,\ldots, N,
$$
then $\bar x$ be a local generalized Nash equilibrium point for the GNEP problem.

When $C_{\nu}(x^{-\nu})$ is defined by (\ref{eq:eqc}), the operators ${\cal B}$ and ${\cal A}$ in (\ref{eq:ders}) are reduced to the following matrices:
\begin{equation}\label{eq:notationsNc}
\begin{array}{l}
{\cal B}=\left(
\begin{array}{cccc}
{\cal J}_{x^1}g_1(\bar x)^T & 0 & \cdots & 0\\[4pt]
0 &{\cal J}_{x^2}g_2(\bar x)^T& \cdots & 0 \\[4pt]
\vdots & \vdots & \vdots & \vdots \\[4pt]
0& 0 & \cdots & {\cal J}_{x^N}g_N(\bar x)^T \\[4pt]
\end{array}
\right),\,\, {\cal A}=\left(
\begin{array}{c}
{\cal J} g_1(\bar x) \\[4pt]
{\cal J}g_2(\bar x) \\[4pt]
\vdots \\[4pt]
{\cal J}g_N(\bar x)
\end{array}
\right).
\end{array}
\end{equation}
\begin{proposition}\label{single-valued-lip}
  Suppose $(\bar x,\bar y)\in S_{{\rm Lag}}(\bar p)$. Let $f_{\nu}: \mathbb R^n \times \mathbb R^d\rightarrow \mathbb R$ and $F_{\nu}: \mathbb R^n \times \mathbb R^d\rightarrow \mathbb R^{q_{\nu}}$ be  twice differentiable with respect to $x$
  in a neighborhood of $(\bar x,\bar w)$. Suppose that $f_{\nu}, F_{\nu}$,  $\nabla_x f_{\nu}$ and  ${\rm D}_xF_{\nu}$  are strictly differentiable with respect to $(x,w)$.
                 Then $S_{{\rm Lag}}$ has a Lipschitz continuous single-valued localization at $\bar p=(\bar w,0,0)$ for $(\bar x,\bar y)$ if and only if the matrix
                 \begin{equation}\label{ec-matrix}
                 \left[
                 \begin{array}{ll}
                 \textbf{G}^T & -{\cal A}^T\\[4pt]
                 {\cal B}^T & 0
                 \end{array}
                 \right
                 ]
                 \end{equation}
                 is nonsingular, where $\textbf{G}$ is defined by (\ref{eq:ders}), ${\cal B}$ and ${\cal A}$ are defined by
                 (\ref{eq:notationsNc}).
  \end{proposition}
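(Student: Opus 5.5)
In the equality-constrained setting we have $X=\mathbb R^n$ and $K=\{0\}\subset\mathbb R^q$, so $K^{\circ}=\mathbb R^q$ and $\Theta=X\times K^{\circ}=\mathbb R^{n+q}$. This set is trivially polyhedral, and $N_{\Theta}(z)=\{0\}$ for every $z$. The plan is therefore to invoke Corollary \ref{corSRegularity}. Its hypotheses are exactly those of Theorem \ref{ThAubin}: twice differentiability in $x$, and strict differentiability of $f_\nu,F_\nu,\nabla_x f_\nu,{\rm D}_xF_\nu$ in $(x,w)$. These are assumed in the present statement. The corollary then says that $S_{{\rm Lag}}$ has a Lipschitz continuous single-valued localization at $\bar p$ for $(\bar x,\bar y)$ if and only if implication (\ref{GNEP-Aubin}) holds. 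What remains is to evaluate (\ref{GNEP-Aubin}) explicitly for this $\Theta$.

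The key computation is the coderivative of the normal cone mapping of a whole space. Since $N_{\mathbb R^m}\equiv\{0\}$, we have ${\rm gph}\,N_{\mathbb R^m}=\mathbb R^m\times\{0\}$. Its normal cone at any point is $\{0\}\times\mathbb R^m$. By the definition of coderivative, $u\in D^*N_{\mathbb R^m}(\bar z\,|\,0)(v)$ means $(u,-v)\in\{0\}\times\mathbb R^m$, so $D^*N_{\mathbb R^m}(\bar z\,|\,0)(v)=\{0\}$ for all $v$. Before using this, we check that the base points in (\ref{GNEP-Aubin}) are $0$. From (\ref{eq:qs}), $-\bar q_1-\textbf{G}\bar x-{\cal B}\bar y=-\textbf{l}(\bar x,\bar y)=0$ and $-\bar q_2+{\cal A}\bar x=\textbf{g}(\bar x)=0$, because $(\bar x,\bar y)\in S_{{\rm Lag}}(\bar p)$. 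With $N_X=N_{\mathbb R^n}$ and $N_{K^{\circ}}=N_{\mathbb R^q}$, implication (\ref{GNEP-Aubin}) reduces to
$$
\textbf{G}^T w_1-{\cal A}^T w_2=0,\quad {\cal B}^T w_1=0\ \Longrightarrow\ w_1=0,\ w_2=0 .
$$
This says precisely that the matrix (\ref{ec-matrix}) has trivial kernel, i.e.\ is nonsingular, since it is square of order $n+q$. Here ${\cal B}$ and ${\cal A}$ are the matrices (\ref{eq:notationsNc}), and adjoints become transposes in Euclidean spaces.

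There is also a more direct route. With $N_\Theta\equiv\{0\}$, $L_{{\rm Lag}}(\delta)$ is the solution set of the linear system $\textbf{M}z=\delta-\bar q$. This set is single-valued and Lipschitz near $0$ if and only if $\textbf{M}$ is invertible, and $\textbf{M}$ is invertible if and only if $\textbf{M}^T$ is, which is exactly (\ref{ec-matrix}). This can be combined with Proposition \ref{Aubin-Calm} and the polyhedral equivalence quoted in the proof of Corollary \ref{corSRegularity}. I would present the corollary route and mention the direct one as a consistency check.

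The main obstacle is bookkeeping rather than analysis. One must verify that the limiting normal cone to ${\rm gph}\,N_{\mathbb R^m}$ is $\{0\}\times\mathbb R^m$; this holds because the graph is a subspace. One must also keep the sign and transpose conventions consistent with $\textbf{M}^*=\begin{bmatrix}\textbf{G}^T & -{\cal A}^T\\ {\cal B}^T & 0\end{bmatrix}$, so that the resulting matrix matches (\ref{ec-matrix}) exactly.
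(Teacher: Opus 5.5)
Your proposal is correct and follows the paper's route. You both note $X=\mathbb R^n$ and $K^{\circ}=\mathbb R^q$, and use ${\rm gph}\,N_{\mathbb R^m}=\mathbb R^m\times\{0\}$ to get $D^*N_{\mathbb R^n}\equiv\{0\}$ and $D^*N_{\mathbb R^q}\equiv\{0\}$; this reduces (\ref{GNEP-Aubin}) to the trivial-kernel condition for the matrix in (\ref{ec-matrix}), which is $\textbf{M}^T$, and the conclusion follows via Theorem \ref{ThAubin} and Corollary \ref{corSRegularity}. Your explicit check that the base points are $-\textbf{l}(\bar x,\bar y)=0$ and $\textbf{g}(\bar x)=0$ is a detail the paper leaves implicit, and your direct linear-system argument is a valid consistency check.
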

  {\bf Proof}. From the definition of $C_{\nu}(x^{-\nu})$, we have $X=\mathbb R^n$  and $K^{\circ}=\mathbb R^q$. Thus we obtain
  $$
  {\rm gph}\, N_X=\{(x, 0):x \in \mathbb R^n\}=\mathbb R^n \times \{0\}, \, {\rm gph}\, N_{K^{\circ}}=\{(y, 0):y \in \mathbb R^q\}=\mathbb R^q \times \{0\}
  $$
  and
  $$
  {\rm gph}\, N_X(x,0)=N_{\mathbb R^n \times \{0\}}(x,0)=\{0\}\times \mathbb R^n,\, {\rm gph}\, N_{K^{\circ}}(y,0)=N_{\mathbb R^q \times \{0\}}(y,0)=\{0\}\times \mathbb R^q.
  $$
  From the definition of coderivative, we obtain
  $$
   D^*N_X(x \,| 0)(w_1)=\{0\} \mbox{ and } D^*N_{K^{\circ}}(y \,| 0)(w_2)=\{0\}.
  $$
  Thus (\ref{GNEP-Aubin}) of Theorem \ref{ThAubin} is reduced to
  \begin{equation}\label{GNEP-Aubinec}
  \left.
  \begin{array}{l}
  \textbf{G}^*w_1-{\cal A}^*w_2 =0\\[4pt]
  {\cal B}^*w_1 \quad \quad \quad \,\, =0
  \end{array}
  \right\} \Longrightarrow w_1=0,w_2=0.
  \end{equation}
  The implication (\ref{GNEP-Aubinec}) is equivalent to the nonsingularity of the matrix in (\ref{ec-matrix}). Then it follows from
  Theorem \ref{ThAubin} that the nonsingularity of the matrix in (\ref{ec-matrix}) is equivalent to $S_{{\rm Lag}}$ has Aubin property at $\bar p=(\bar w,0,0)$ for $(\bar x,\bar y)$, which from Corollary \ref{corSRegularity}, is equivalent to the property that  $S_{{\rm Lag}}$ has a Lipschitz continuous single-valued localization at $\bar p=(\bar w,0,0)$ for $(\bar x,\bar y)$. The proof is completed. \hfill $\Box$

  Now we consider the case when $C_{\nu}(x^{-\nu})$ is defined by both equality and inequality constraints:
    \begin{equation}\label{eq:ineqc}
C_{\nu}(x^{-\nu})=\{x^{\nu}\in \mathbb R^{n_{\nu}}:g_{\nu}(x^{\nu},x^{-\nu})\in K_{\nu}\}\mbox{ with } K_{\nu}=\{0_{q_{\nu}}\} \times \mathbb R^{m_{\nu}-q_{\nu}}_-,
\end{equation}
  where $g_{\nu}: \mathbb R^n \rightarrow \mathbb R^{m_{\nu}}$, $\nu=1,\ldots,N$. We define $m=m_1+\ldots+m_N$.

  In this case $S_{{\rm Lag}}(p)$ is be expressed as the following form
\begin{equation}\label{Lag-eqIeqC}
S_{\rm Lag}(p)=\left
\{(x,y)\in \mathbb R^n \times \mathbb R^q:
\begin{array}{l}
0=\textbf{L}(x,y;p)\\[4pt]
0= -F_{1,a}(x;w)+u^{1,a}\\[4pt]
0\in -F_{1,b}(x;w)+u^{1,b}+N_{\mathbb R^{m_1-q_1}_+}(y^{1,b})\\[4pt]
\quad \quad \quad \vdots\\[4pt]
0=-F_{N,a}(x;w)+u^{N,a}\\[4pt]
0\in -F_{N,b}(x;w)+u^{N,b}+N_{\mathbb R^{m_N-q_N}_+}(y^{N,b})\\[4pt]
\end{array}
\right\},
\end{equation}
where
$$
\textbf{L}(x,y;p)=\left(
\begin{array}{c}
\nabla_{x^1}f_{1}(x;w)+{\cal J}_{x^1}F_1(x;w)^Ty^{1}-v^{1}\\[2pt]
\cdots\\
\nabla_{x^N}f_{N}(x;w)+{\cal J}_{x^N}F_1(x;w)^Ty^{N}-v^{N}
\end{array}
\right),\,\, F_{\nu}=\left(
\begin{array}{c}
F_{\nu,a}\\[3pt]
F_{\nu,b}
\end{array}
\right), \,\,  u^{\nu}=\left(
\begin{array}{c}
u^{\nu,a}\\[3pt]
u^{\nu,b}
\end{array}
\right),\,\, y^{\nu}=\left(
\begin{array}{c}
y^{\nu,a}\\[3pt]
y^{\nu,b}
\end{array}
\right)
$$
 with $F_{\nu,a}:\mathbb R^n \times \mathbb R^d \rightarrow \mathbb R^{q_{\nu}}$, $F_{\nu,b}:\mathbb R^n \times \mathbb R^d \rightarrow \mathbb R^{m_{\nu}-q_{\nu}}$, and $u^{\nu,a},y^{\nu,a}\in R^{q_{\nu}}$ and $u^{\nu,b}, y^{\nu,b}\in R^{m_{\nu}-q_{\nu}}$.

 Let $\bar x$ be a local generalized Nash equilibrium point for the GNEP problem when $C_{\nu}(x^{-\nu})$ is defined by (\ref{eq:ineqc}). Suppose that
$\theta_{\nu}$ and  $g_{\nu}$  are twice continuously differentiable in a neighborhood of $\bar x$ for each $\nu=1,\ldots,N$. Suppose that for each  $\nu=1,\ldots,N$,
$$
\begin{array}{l}
{\cal J}_{x^{\nu}}g_{\nu,a}(\bar x) \mbox{is of full rank in row}\\[4pt]
\mbox{ there exists a vector } \widehat d^{\nu} \in \mathbb R^{n_{\nu}}\mbox{ satisfies, and }
{\cal J}_{x^{\nu}}g_{\nu,a}(\bar x)\widehat d^{\nu}=0,\, {\cal J}_{x^{\nu}}g_{\nu,b}(\bar x)\widehat d^{\nu}<0,
\end{array}
$$
  then there exists a  vector $\bar y\in \mathbb R^m$ with
$\bar y=(\bar y^1,\ldots, \bar y^N)$, $\bar y^{\nu}\in \mathbb R^{m_{\nu}}$ such that $(\bar x,\bar y)\in S_{{\rm Lag}}(\bar p)$, where $\bar p=(\bar w,0,0)$.

For  $(\bar x,\bar y)\in S_{{\rm Lag}}(\bar p)$, define three sets of indices:
 \begin{equation}\label{3ind}
 \begin{array}{l}
 \alpha_{\nu}=\{1,\ldots, q_{\nu}\} \cup\{i: [g_{\nu}]_i(\bar x)=0< \bar y^{\nu}_i: i=q_{\nu}+1,\ldots,m_{\nu}\},\\[4pt]
 \beta_{\nu}=\{i: [g_{\nu}]_i(\bar x)=0=\bar y^{\nu}_i: i=q_{\nu}+1,\ldots,m_{\nu}\},\\[4pt]
 \gamma_{\nu}=\{i: [g_{\nu}]_i(\bar x)<0=\bar y^{\nu}_i: i=q_{\nu}+1,\ldots,m_{\nu}\}.
\end{array}
 \end{equation}
If for any $d \in \mathbb R^n$  satisfying
$$
{\cal J}_{x^{\nu}}g_{\nu,\alpha_{\nu}}(\bar x)d^{\nu}=0, {\cal J}_{x^{\nu}}g_{\nu,\beta_{\nu}}(\bar x)d^{\nu}\leq0,
d^{\nu}\ne 0, \nu=1,\ldots, N,
$$
one has that
$$
\langle d^{\nu}, \nabla^2_{x^{\nu}x^{\nu}}l_{\nu}(\bar x,\bar y^{\nu})d^{\nu}\rangle >0, \nu=1,\ldots, N,
$$
then $\bar x$ be a local generalized Nash equilibrium point for the GNEP problem.

 Let $(\bar x,\bar y) \in S_{\rm Lag}(\bar p)$ with $\bar p=(\bar w,0,0)$. For $\bar y \in N_K(\textbf{g}(\bar x))$, we have $\bar y=(\bar y^1,\ldots, \bar y^N)$ satisfies
 $\bar y^{\nu}\in N_{K_{\nu}}(g_{\nu}(\bar x))$, or $g_{\nu}(\bar x) \in N_{K_{\nu}^{\circ}}(\bar y^{\nu})$.
 One has, for $\nu=1,\ldots, N$, that $K_{\nu}^{\circ}=\mathbb R^{q_{\nu}}\times \mathbb R^{m_{\nu}-q_{\nu}}_+$ and
 \begin{equation}\label{eq:ngraph}
 \begin{array}{l}
 N_{{\rm gph}\, N_{K_{\nu}^{\circ}}}(\bar y^{\nu}, g_{\nu}(\bar x))\\[3pt]
 =N_{{\rm gph}\, N_{\mathbb R^{q_{\nu}}}}(\bar y^{\nu,a}, g_{\nu,a}(\bar x))\times N_{{\rm gph}\, N_{\mathbb R^{m_{\nu}-q_{\nu}}_+}}(\bar y^{\nu,b}, g_{\nu,b}(\bar x))\\[3pt]
 = \{0_{|\alpha_{\nu}|}\}\times \mathbb R^{|\alpha_{\nu}|}
 \times \Big[ (\mathbb R_-\times \mathbb R_+) \cup (\{0\}\times \mathbb R)\cup(\mathbb R \times \{0\})\Big]^{|\beta_{\nu}|}\times
 \mathbb R^{|\gamma_{\nu}|}\times \{0_{|\gamma_{\nu}|}\}
 \end{array}
 \end{equation}
 and
   \begin{equation}\label{eq:tgraph}
 \begin{array}{l}
 T_{{\rm gph}\, N_{K_{\nu}^{\circ}}}(\bar y^{\nu}, g_{\nu}(\bar x))\\[3pt]
 =T_{{\rm gph}\, N_{\mathbb R^{q_{\nu}}}}(\bar y^{\nu,a}, g_{\nu,a}(\bar x))\times T_{{\rm gph}\, N_{\mathbb R^{m_{\nu}-q_{\nu}}_+}}(\bar y^{\nu,b}, g_{\nu,b}(\bar x))\\[3pt]
 = \mathbb R^{|\alpha_{\nu}|}\times \{0_{|\alpha_{\nu}|}\}
 \times {\rm ghp}\, N_{\mathbb R^{|\beta_{\nu}|}_+}\times \{0_{|\gamma_{\nu}|}\}\times
 \mathbb R^{|\gamma_{\nu}|}.
 \end{array}
 \end{equation}
 \begin{theorem}\label{svalued-lip-nlp}
  Suppose $(\bar x,\bar y)\in S_{{\rm Lag}}(\bar p)$. Let $f_{\nu}: \mathbb R^n \times \mathbb R^d\rightarrow \mathbb R$ and $F_{\nu}: \mathbb R^n \times \mathbb R^d\rightarrow \mathbb R^{m_{\nu}}$ be  twice differentiable with respect to $x$
  in a neighborhood of $(\bar x,\bar w)$. Suppose that $f_{\nu}, F_{\nu}$,  $\nabla_x f_{\nu}$ and  ${\cal J}_xF_{\nu}$  are strictly differentiable with respect to $(x,w)$. Then $S_{{\rm Lag}}$ has a Lipschitz continuous single-valued localization at $\bar p=(\bar w,0,0)$ for $(\bar x,\bar y)$ if and only if, for any $\nu=1,\ldots, N$, for any partition
  $\beta_{\nu}=\beta_{\nu}^+\cup \beta_{\nu}^0\cup \beta_{\nu}^-$ and $\alpha_{\nu}^+=\alpha_{\nu}\cup \beta_{\nu}^+$,
                   \begin{equation}\label{ec-matrixc}
                   \left.
                 \begin{array}{l}
                 \textbf{G}^Tw_1=\displaystyle \sum_{\nu=1}^N \Big({\cal J}g_{\nu,\alpha_{\nu}^+}(\bar x)^T[w_2]_{\alpha_{\nu}^+}+{\cal J}g_{\nu,\beta^0_{\nu}}(\bar x)^T[w_2]_{\beta_{\nu}^0}\Big)\\[6pt]
                 {\cal J}_{x^{\nu}}g_{\nu,\alpha_{\nu}^+}(\bar x)[w_1]_{\nu}=0,\, {\cal J}_{x^{\nu}}g_{\nu,\beta_{\nu}^0}(\bar x)[w_1]_{\nu}\leq 0,
                 [w_2]_{\beta_{\nu}^0}\geq0
                 \end{array}
                 \right
                 \} \Longrightarrow w_1=0, (w_2)_{\alpha_{\nu}^+\cap\beta_{\nu}^0}=0,
                 \end{equation}
                  where $\textbf{G}$ is defined by (\ref{eq:ders}), ${\cal B}$ and ${\cal A}$ are defined by
                 (\ref{eq:notationsNc}).
  \end{theorem}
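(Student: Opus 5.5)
This follows the route of Proposition \ref{single-valued-lip} for the equality case. Since $X=\mathbb R^n$ and $K^{\circ}=\prod_{\nu}\big(\mathbb R^{q_\nu}\times\mathbb R^{m_\nu-q_\nu}_+\big)$, the set $\Theta=X\times K^{\circ}$ is polyhedral. Corollary \ref{corSRegularity} then applies: $S_{\rm Lag}$ has a Lipschitz continuous single-valued localization at $\bar p$ for $(\bar x,\bar y)$ if and only if the coderivative implication (\ref{GNEP-Aubin}) holds. The whole task is therefore to compute the two coderivatives in (\ref{GNEP-Aubin}) explicitly and rewrite the implication as (\ref{ec-matrixc}).

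The first step handles the $X$-row. Because ${\rm gph}\,N_X=\mathbb R^n\times\{0\}$, we have $D^*N_X(\bar x\,|\,0)(-w_1)=\{0\}$, so this row reads $\textbf{G}^Tw_1={\cal A}^Tw_2$. Writing ${\cal A}^Tw_2=\sum_\nu {\cal J}g_\nu(\bar x)^T[w_2]_\nu$ and splitting by index blocks gives the first line of (\ref{ec-matrixc}), once we know which components of $w_2$ survive.

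The second step handles the $K^{\circ}$-row, ${\cal B}^Tw_1\in D^*N_{K^\circ}(\bar y\,|\,\textbf{g}(\bar x))(-w_2)$. Here $-\bar q_2+{\cal A}\bar x=\textbf{g}(\bar x)$, and ${\cal B}^Tw_1$ has $\nu$-block ${\cal J}_{x^\nu}g_\nu(\bar x)[w_1]_\nu$. The condition is equivalent to $({\cal B}^Tw_1,\,w_2)\in N_{{\rm gph}\,N_{K^\circ}}(\bar y,\textbf{g}(\bar x))$, which by (\ref{eq:ngraph}) splits componentwise over $\alpha_\nu,\beta_\nu,\gamma_\nu$:
\begin{itemize}
\item on $\alpha_\nu$ the first component vanishes, so $({\cal J}_{x^\nu}g_{\nu,\alpha_\nu}[w_1]_\nu)=0$ and $[w_2]_{\alpha_\nu}$ is free;
\item on $\gamma_\nu$ the second component vanishes, so $[w_2]_{\gamma_\nu}=0$ and the $w_1$-row is free;
\item on each $i\in\beta_\nu$ the pair lies in $(\mathbb R_-\times\mathbb R_+)\cup(\{0\}\times\mathbb R)\cup(\mathbb R\times\{0\})$.
\end{itemize}

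The third step is the partition argument. The union over $\beta_\nu$ is encoded by assigning each $i\in\beta_\nu$ to one of three pieces:
\begin{itemize}
\item $\beta_\nu^+$ (the piece $\{0\}\times\mathbb R$): row $=0$, $[w_2]_i$ free, so $i$ behaves like an $\alpha$-index, giving $\alpha_\nu^+=\alpha_\nu\cup\beta_\nu^+$;
\item $\beta_\nu^0$ (the piece $\mathbb R_-\times\mathbb R_+$): row $\le 0$, $[w_2]_i\ge0$;
\item $\beta_\nu^-$ (the piece $\mathbb R\times\{0\}$): $[w_2]_i=0$, so $i$ behaves like a $\gamma$-index.
\end{itemize}
Since the normal cone is the union over all such assignments, the implication (\ref{GNEP-Aubin}) holds if and only if it holds for every partition. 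For each fixed partition, substituting $[w_2]_{\gamma_\nu\cup\beta^-_\nu}=0$ into the $X$-row yields exactly the system in (\ref{ec-matrixc}). The conclusion "$w_1=0$, $w_2=0$" then reduces to $w_1=0$ together with the vanishing of the remaining free components indexed by $\alpha^+_\nu\cup\beta^0_\nu$, because the other components of $w_2$ are already zero.

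The main obstacle is the bookkeeping in this last step. The delicate point is the boundary overlap among the three pieces of the $\beta$-union, for example $(0,t)$ with $t\ge0$, which lies in both $\mathbb R_-\times\mathbb R_+$ and $\{0\}\times\mathbb R$. Taking all partitions covers every point, and no spurious constraint is added, so "for every partition" is the exact equivalent. Finally, invoke Corollary \ref{corSRegularity} to conclude.
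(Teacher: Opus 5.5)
Your proposal is correct and follows the paper's own proof: you reduce via Corollary \ref{corSRegularity} to the coderivative implication (\ref{GNEP-Aubin}), drop the $N_X$-coderivative because $X=\mathbb R^n$, read off $D^*N_{K^{\circ}}$ componentwise from (\ref{eq:ngraph}), and encode the three-piece union on each $\beta_\nu$ by partitions. You rightly conclude $w_1=0$ and $[w_2]_{\alpha_\nu^+\cup\beta_\nu^0}=0$, which is what both your derivation and the paper's proof produce; the printed index set $\alpha_\nu^+\cap\beta_\nu^0$ is empty and is a typo, since with it the ``if'' direction fails (e.g.\ for a duplicated active inequality constraint).
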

  {\bf Proof}. We apply Theorem \ref{ThAubin} to develop Aubin property of $S_{{\rm Lag}}$ at $\bar p=(\bar w,0,0)$ for $(\bar x,\bar y)$. From the definition of ${\cal B}^*$ and $K^{\circ}$, we have that the inclusion ${\cal B}^*w_1 \in D^*N_{K^{\circ}}(\bar y\,|\textbf{g}(\bar x))(-w_2)$ in (\ref{GNEP-Aubin}) is equivalent to
  $$
  {\cal J}_{x^{\nu}}g_{\nu}(\bar x)[w_1]_{\nu} \in D^*N_{K_{\nu}^{\circ}}(\bar y^{\nu}\,|g_{\nu}(\bar x)(-[w_2]_{\nu}),\, \nu=1,\ldots, N,
  $$
  where $w_2=([w_2]_1,\ldots,[w_2]_N$ and $[w_2]_{\nu}\in \mathbb R^{m_{\nu}}$ for $\nu=1,\ldots, N$.

 It follows from (\ref{eq:ngraph}) that
 \begin{equation}\label{N-Ds}
 \begin{array}{l}
  {\cal J}_{x^{\nu}}g_{\nu}(\bar x)[w_1]_{\nu} \in D^*N_{K_{\nu}^{\circ}}(\bar y^{\nu}\,|g_{\nu}(\bar x)(-[w_2]_{\nu})\\[8pt]
\Longleftrightarrow  \left\{
  \begin{array}{l}
  {\cal J}_{x^{\nu}}g_{\alpha_{\nu}}(\bar x)[w_1]_{\nu}=0\\[3pt]
  ({\cal J}_{x^{\nu}}g_{\beta_{\nu}}(\bar x)[w_1]_{\nu},[w_2]_{|\beta_{\nu}|})\in \Big[ (\mathbb R_-\times \mathbb R_+) \cup (\{0\}\times \mathbb R)\cup(\mathbb R \times \{0\})\Big]^{|\beta_{\nu}|}\\[4pt]
  [w_2]_{\gamma_{\nu}}=0
  \end{array}
  \right.
  \end{array}
   \end{equation}
   Since $X=\mathbb R^n$, we have that (\ref{GNEP-Aubin}) is reduced to
   \begin{equation}\label{eq:nlpAubin}
  \left.
  \begin{array}{l}
  \textbf{G}^*w_1-{\cal J}\textbf{g}(\bar x)^Tw_2 =0\\[4pt]
 {\cal J}_{x^{\nu}}g_{\nu}(\bar x)[w_1]_{\nu} \in D^*N_{K_{\nu}^{\circ}}(\bar y^{\nu}\,|g_{\nu}(\bar x)(-[w_2]_{\nu}),\, \nu=1,\ldots, N
  \end{array}
  \right\} \Longrightarrow w_1=0,w_2=0.
  \end{equation}
 We have from (\ref{N-Ds}) that (\ref{eq:nlpAubin}) is equivalent to
 $$
  \left.
  \begin{array}{l}
  \textbf{G}^*w_1=\displaystyle \sum_{\nu=1}^N \left({\cal J}g_{\alpha_{\nu}}(\bar x)^T[w_2]_{\alpha_{\nu}}+ {\cal J}g_{\beta_{\nu}}(\bar x)^T[w_2]_{\beta_{\nu}}\right)\\[4pt]
{\cal J}_{x^{\nu}}g_{\alpha_{\nu}}(\bar x)[w_1]_{\nu}=0\\[3pt]
  ({\cal J}_{x^{\nu}}g_{\beta_{\nu}}(\bar x)[w_1]_{\nu},[w_2]_{|\beta_{\nu}|})\in \Big[ (\mathbb R_-\times \mathbb R_+) \cup (\{0\}\times \mathbb R)\cup(\mathbb R \times \{0\})\Big]^{|\beta_{\nu}|}\\[4pt]
  \end{array}
  \right\} \Longrightarrow w_1=0,[w_2]_{\alpha_{\nu}\cup\beta_{\nu}}=0.
  $$
  Therefore,  (\ref{eq:nlpAubin}) is equivalent to that,
   for any $\nu=1,\ldots, N$, for any partition
  $\beta_{\nu}=\beta_{\nu}^+\cup \beta_{\nu}^0\cup \beta_{\nu}^-$ and $\alpha_{\nu}^+=\alpha_{\nu}\cup \beta_{\nu}^+$, the implication (\ref{ec-matrix}) holds.  Then it follows from
  Theorem \ref{ThAubin} that this condition is equivalent to $S_{{\rm Lag}}$ has Aubin property at $\bar p=(\bar w,0,0)$ for $(\bar x,\bar y)$, which from Corollary \ref{corSRegularity}, is equivalent to the property that  $S_{{\rm Lag}}$ has a Lipschitz continuous single-valued localization at $\bar p=(\bar w,0,0)$ for $(\bar x,\bar y)$. The proof is completed. \hfill $\Box$\\
  For every $\nu=1,\ldots, N$, when the strict complementarity condition holds for $\nu$-th player's problem at $(\bar x^{\nu},\bar y^{\nu})$, namely $\beta_{\nu}=\emptyset$, we have the following corollary.
  \begin{corollary}\label{cor:sc}
  Suppose $(\bar x,\bar y)\in S_{{\rm Lag}}(\bar p)$ and $\beta_{\nu}=\emptyset$ for every $\nu=1,\ldots,N$. Let $f_{\nu}: \mathbb R^n \times \mathbb R^d\rightarrow \mathbb R$ and $F_{\nu}: \mathbb R^n \times \mathbb R^d\rightarrow \mathbb R^{m_{\nu}}$ be  twice differentiable with respect to $x$
  in a neighborhood of $(\bar x,\bar w)$. Suppose that $f_{\nu}, F_{\nu}$,  $\nabla_x f_{\nu}$ and  ${\cal J}_xF_{\nu}$  are strictly differentiable with respect to $(x,w)$. Assume that $\beta_{\nu}=\emptyset$ for every $\nu=1,\ldots, N$. Then $S_{{\rm Lag}}$ has a Lipschitz continuous single-valued localization at $\bar p=(\bar w,0,0)$ for $(\bar x,\bar y)$ if and only if
  the matrix
                 \begin{equation}\label{ec-matrixSc}
                 \left[
                 \begin{array}{ll}
                 \textbf{G}^T & -{\cal A}_1^T\\[4pt]
                 {\cal B}_1^T & 0
                 \end{array}
                 \right
                 ]
                 \end{equation}
                 is nonsingular, where $\textbf{G}$ is defined by (\ref{eq:ders}), ${\cal B}_1$ and ${\cal A}_1$ are defined by
                                  \begin{equation}\label{eq:notationsNccs}
\begin{array}{l}
{\cal B}_1=\left(
\begin{array}{cccc}
{\cal J}_{x^1}g_{1,\alpha_1}(\bar x)^T & 0 & \cdots & 0\\[4pt]
0 &{\cal J}_{x^2}g_{2,\alpha_2}(\bar x)^T& \cdots & 0 \\[4pt]
\vdots & \vdots & \vdots & \vdots \\[4pt]
0& 0 & \cdots & {\cal J}_{x^N}g_{N,\alpha_N}(\bar x)^T \\[4pt]
\end{array}
\right),\,\, {\cal A}_1=\left(
\begin{array}{c}
{\cal J} g_{1,\alpha_1}(\bar x) \\[4pt]
{\cal J}g_{2,\alpha_2}(\bar x) \\[4pt]
\vdots \\[4pt]
{\cal J}g_{N,\alpha_N}(\bar x)
\end{array}
\right).
\end{array}
\end{equation}
  \end{corollary}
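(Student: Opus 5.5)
The plan is to derive Corollary \ref{cor:sc} as a direct specialization of Theorem \ref{svalued-lip-nlp}. Alternatively, we can go back to Theorem \ref{ThAubin} and Corollary \ref{corSRegularity} and repeat the reduction with $\beta_{\nu}=\emptyset$. Either way, the key observation is that strict complementarity collapses the combinatorial partition condition to a single linear system. We then only have to show that this system has only the trivial solution exactly when the matrix in (\ref{ec-matrixSc}) is nonsingular.

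First, with $\beta_{\nu}=\emptyset$ for all $\nu$, the only admissible partition is $\beta_{\nu}^+=\beta_{\nu}^0=\beta_{\nu}^-=\emptyset$. Hence $\alpha_{\nu}^+=\alpha_{\nu}$, and all sign conditions in (\ref{ec-matrixc}) disappear. From (\ref{N-Ds}), the coderivative inclusion reduces to ${\cal J}_{x^{\nu}}g_{\nu,\alpha_{\nu}}(\bar x)[w_1]_{\nu}=0$ together with $[w_2]_{\gamma_{\nu}}=0$. The $\beta$-block of (\ref{eq:ngraph}) is empty, so the set $N_{{\rm gph}\,N_{K_\nu^\circ}}$ is the linear subspace $\{0\}\times\mathbb R^{|\alpha_\nu|}\times\mathbb R^{|\gamma_\nu|}\times\{0\}$. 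The components $[w_2]_{\gamma_\nu}$ are forced to vanish, so only $[w_2]_{\alpha_\nu}$ survives. Writing $\tilde w_2=([w_2]_{\alpha_1};\ldots;[w_2]_{\alpha_N})$, the implication (\ref{eq:nlpAubin}) becomes the following statement: $\textbf{G}^Tw_1-{\cal A}_1^T\tilde w_2=0$ and ${\cal B}_1^Tw_1=0$ imply $w_1=0$ and $\tilde w_2=0$. Here ${\cal B}_1^Tw_1$ is exactly the stacked vector of the ${\cal J}_{x^\nu}g_{\nu,\alpha_\nu}(\bar x)[w_1]_\nu$, and $\sum_\nu{\cal J}g_{\nu,\alpha_\nu}(\bar x)^T[w_2]_{\alpha_\nu}={\cal A}_1^T\tilde w_2$. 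Both identifications follow from the block structures in (\ref{eq:notationsNccs}).

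Second, this homogeneous system is precisely
\[
\left[\begin{array}{ll}\textbf{G}^T & -{\cal A}_1^T\\ {\cal B}_1^T & 0\end{array}\right]\left(\begin{array}{l} w_1\\ \tilde w_2\end{array}\right)=0 .
\]
The matrix is square: $\textbf{G}^T$ is $n\times n$, ${\cal A}_1^T$ is $n\times|\alpha|$, and ${\cal B}_1^T$ is $|\alpha|\times n$, where $|\alpha|=\sum_\nu|\alpha_\nu|$. So having only the trivial solution is equivalent to nonsingularity. Combining this with Theorem \ref{ThAubin} and then Corollary \ref{corSRegularity} gives the conclusion. Corollary \ref{corSRegularity} applies because $\Theta=\mathbb R^n\times\prod_\nu(\mathbb R^{q_\nu}\times\mathbb R^{m_\nu-q_\nu}_+)$ is polyhedral, and it converts the Aubin property into a Lipschitz continuous single-valued localization.

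The only delicate point is bookkeeping, which I expect to be the main obstacle in writing this cleanly. We must make sure that the implication's conclusion $w_2=0$ in Theorem \ref{ThAubin} is fully accounted for. The $\gamma_\nu$-components are zero automatically from the coderivative, so requiring $w_2=0$ is equivalent to requiring $\tilde w_2=0$. We must also check that the equality indices $\{1,\ldots,q_\nu\}$, which are included in $\alpha_\nu$, are handled consistently with the $\mathbb R^{q_\nu}$ factor of $K_\nu^\circ$, whose normal cone graph has normal $\{0\}\times\mathbb R^{q_\nu}$. Once these identifications are made, the equivalence is immediate.
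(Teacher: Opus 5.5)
Your proposal is correct and follows the route the paper intends for this corollary, which it states without proof right after Theorem~\ref{svalued-lip-nlp}: specialize that polyhedral characterization (Theorem~\ref{ThAubin} combined with Corollary~\ref{corSRegularity}) to $\beta_\nu=\emptyset$, so the partition condition collapses to a square homogeneous linear system in $(w_1,\tilde w_2)$, and having only the trivial solution is exactly nonsingularity of the stated matrix. Taking the conclusion $w_1=0,\ w_2=0$ from (\ref{eq:nlpAubin}) rather than from the printed conclusion of Theorem~\ref{svalued-lip-nlp} is the right bookkeeping, since that printed index set $\alpha_\nu^+\cap\beta_\nu^0$ is empty here and would give only $w_1=0$, not $\tilde w_2=0$.
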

  \begin{remark}\label{rem:LICQ}
  In view of Corollary \ref{cor:sc}, we have that, if $\beta_{\nu}=\emptyset$ for every $\nu=1,\ldots, N$, then the Lipschitz continuous single-valued localization of $S_{{\rm Lag}}$ at $\bar p=(\bar w,0,0)$ for $(\bar x,\bar y)$ implies that the set of vectors
  $$
  \displaystyle \bigcup_{\nu=1}^N \left\{\nabla g_{\nu,i}(\bar x):i \in \alpha_{\nu}\right\}
  $$
  are linearly independent.
  \end{remark}
  Now we turn to discussing the isolated calmness property of $S_{{\rm Lag}}$  at $\bar p=(\bar w,0,0)$ for $(\bar x,\bar y)$ by using Theorem
  \ref{ThIC}.
  \begin{theorem}\label{th-ic-nlp}
  Suppose $(\bar x,\bar y)\in S_{{\rm Lag}}(\bar p)$. Let $f_{\nu}: \mathbb R^n \times \mathbb R^d\rightarrow \mathbb R$ and $F_{\nu}: \mathbb R^n \times \mathbb R^d\rightarrow \mathbb R^{m_{\nu}}$ be  twice differentiable with respect to $x$
  in a neighborhood of $(\bar x,\bar w)$. Suppose that $f_{\nu}, F_{\nu}$,  $\nabla_x f_{\nu}$ and  ${\cal J}_xF_{\nu}$  are strictly differentiable with respect to $(x,w)$. Then $S_{{\rm Lag}}$ has the isolated calmness property at $\bar p=(\bar w,0,0)$ for $(\bar x,\bar y)$ if and only if
  \begin{equation}\label{ic-nlpth}
                  \left.
                 \begin{array}{l}
                 \textbf{G}d_x+
                   \left(\begin{array}{c}
                   {\cal J}_{x^{1}}g_{1,\alpha_{1}}(\bar x)^T[d_y]_{\alpha_{1}}+{\cal J}_{x^{1}}g_{1,\beta_{1}}(\bar x)^T[d_y]_{\beta_{1}}\\[6pt]
                   \vdots\\[3pt]
                   {\cal J}_{x^{N}}g_{N,\alpha_{N}}(\bar x)^T[d_y]_{\alpha_{N}}+{\cal J}_{x^{N}}g_{N,\beta_{N}}(\bar x)^T[d_y]_{\beta_{N}}
                   \end{array}
                   \right)=0\\[6pt]
                 {\cal J}g_{\nu,\alpha_{\nu}}(\bar x)d_x=0,\nu=1,\ldots,N\\[6pt]
                  0\geq {\cal J}g_{\nu,\beta_{\nu}}(\bar x)d_x \perp [d_{y^{\nu}}]_{\beta_{\nu}}\geq 0,\nu=1,\ldots,N
                                 \end{array}
                 \right
                 \} \Longrightarrow d_x=0,[d_y]_{\alpha_{\nu}\cup\beta_{\nu}}=0,
                 \end{equation}
                  where $\textbf{G}$ is defined by (\ref{eq:ders}), ${\cal B}$ and ${\cal A}$ are defined by
                 (\ref{eq:notationsNc}).
  \end{theorem}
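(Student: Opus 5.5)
We specialize Theorem \ref{ThIC} to the present setting, in the same way Theorem \ref{svalued-lip-nlp} specialized Theorem \ref{ThAubin}. Here $X=\mathbb R^n$, so ${\rm gph}\,N_X=\mathbb R^n\times\{0\}$. Its tangent cone at any point is again $\mathbb R^n\times\{0\}$, hence $DN_X(\bar x\,|\,0)(d_x)=\{0\}$ for all $d_x$. The first inclusion in (\ref{GNEP-IC}) therefore becomes the equation $\textbf{G}d_x+{\cal B}d_y=0$. Writing ${\cal B}d_y$ blockwise, the $\nu$-th block is ${\cal J}_{x^\nu}g_\nu(\bar x)^T[d_y]_\nu$, which splits over the index sets $\alpha_\nu,\beta_\nu,\gamma_\nu$ defined in (\ref{3ind}).

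Next we decompose the second inclusion ${\cal A}d_x\in DN_{K^\circ}(\bar y\,|\,\textbf{g}(\bar x))(d_y)$ player by player. Since $K^\circ=K_1^\circ\times\cdots\times K_N^\circ$, the graph of $N_{K^\circ}$ is a product (after reordering coordinates), and so is its tangent cone. The inclusion is thus equivalent to $(\,[d_y]_\nu,\ {\cal J}g_\nu(\bar x)d_x\,)\in T_{{\rm gph}\,N_{K_\nu^\circ}}(\bar y^\nu,g_\nu(\bar x))$ for each $\nu$, where ${\cal J}g_\nu$ is the full Jacobian in $x$, i.e. the $\nu$-th block row of ${\cal A}$. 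We then read off the tangent cone from (\ref{eq:tgraph}), taking care that the $y$-component comes first and the normal component second.
\begin{itemize}
\item On $\alpha_\nu$, the component $[d_y]_{\alpha_\nu}$ is free and ${\cal J}g_{\nu,\alpha_\nu}(\bar x)d_x=0$.
\item On $\gamma_\nu$, we get $[d_y]_{\gamma_\nu}=0$ and ${\cal J}g_{\nu,\gamma_\nu}(\bar x)d_x$ is free.
\item On $\beta_\nu$, the pair lies in ${\rm gph}\,N_{\mathbb R^{|\beta_\nu|}_+}$, i.e. $0\ge{\cal J}g_{\nu,\beta_\nu}(\bar x)d_x\perp[d_y]_{\beta_\nu}\ge 0$.
\end{itemize}
These are exactly the second and third lines of (\ref{ic-nlpth}). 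Because $[d_y]_{\gamma_\nu}=0$, the $\gamma$-terms drop out of ${\cal B}d_y$, which yields the first line of (\ref{ic-nlpth}).

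Finally, we check that the conclusions match. Condition (\ref{GNEP-IC}) concludes $d_x=0,\ d_y=0$. Since $[d_y]_{\gamma_\nu}=0$ is forced by the hypotheses, this is equivalent to $d_x=0$ and $[d_y]_{\alpha_\nu\cup\beta_\nu}=0$. Invoking Theorem \ref{ThIC}, whose hypotheses coincide with those assumed here, completes the argument.

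The main obstacle is the bookkeeping in the tangent-cone computation. One has to verify carefully that the tangent cone to the product graph equals the product of the tangent cones. This holds here because each factor is a polyhedral graph (a finite union of polyhedra), so the tangent cones are derivable. One must also keep the $(y,\text{normal})$ ordering consistent, which fixes the sign pattern $0\ge{\cal J}g_{\nu,\beta_\nu}d_x$ rather than $\le$. For the $\beta$ part, we use the fact that ${\rm gph}\,N_{\mathbb R_+}$ is a closed cone, so it is its own tangent cone at the origin.
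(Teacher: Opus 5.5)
Your proposal is correct and follows essentially the same route as the paper. Both specialize Theorem \ref{ThIC} with $X=\mathbb R^n$ (so $DN_X(\bar x\,|\,0)(d_x)=\{0\}$), split the $N_{K^{\circ}}$ inclusion player by player, read off the $\alpha_\nu/\beta_\nu/\gamma_\nu$ conditions from (\ref{eq:tgraph}), and use $[d_y]_{\gamma_\nu}=0$ both to drop the $\gamma$-terms and to match the conclusions; your version is slightly cleaner, since you cite Theorem \ref{ThIC} and (\ref{GNEP-IC}) where the paper's proof mistakenly cites Theorem \ref{ThAubin} and (\ref{GNEP-Aubin}), and you justify the product rule for the tangent cone via derivability of the polyhedral factors, which the paper takes for granted.
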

  {\bf Proof}. We apply Theorem \ref{ThAubin} to develop the isolated calmness property of $S_{{\rm Lag}}$ at $\bar p=(\bar w,0,0)$ for $(\bar x,\bar y)$.

  From the definition of ${\cal A}$ and $K^{\circ}$, we have that the inclusion ${\cal A}d_x \in DN_{K^{\circ}}(\bar y\,|\textbf{g}(\bar x))(d_y)$ in (\ref{GNEP-Aubin}) is equivalent to
  $$
  {\cal J}g_{\nu}(\bar x)d_x \in DN_{K_{\nu}^{\circ}}(\bar y^{\nu}\,|g_{\nu}(\bar x))(d_{y^{\nu}}),\, \nu=1,\ldots, N,
  $$
  where $d_y=(d_y^1,\ldots,d_y^N)$ and $d_y^{\nu}\in \mathbb R^{m_{\nu}}$ for $\nu=1,\ldots, N$.

 It follows from (\ref{eq:tgraph}) that
 \begin{equation}\label{N-Dsd}
 \begin{array}{l}
  {\cal J}g_{\nu}(\bar x)d_x \in DN_{K_{\nu}^{\circ}}(\bar y^{\nu}\,|g_{\nu}(\bar x))(d_{y^{\nu}})\\[8pt]
\Longleftrightarrow (d_{y^{\nu}},{\cal J}g_{\nu}(\bar x)d_x )\in T_{{\rm gph}\,N_{K_{\nu}^{\circ}}})(\bar y^{\nu}, g_{\nu}(\bar x))\\[8pt]
\Longleftrightarrow
 \left\{
  \begin{array}{l}
  {\cal J}g_{v, \alpha_{\nu}}(\bar x)d_x=0\\[3pt]
  ([d_{y^{\nu}}]_{\beta_{\nu}},  {\cal J}g_{v, \beta_{\nu}}(\bar x)d_x)\in {\rm gph}\, N_{\mathbb R^{|\beta_{\nu}|}_+}\\[4pt]
  [d_y]_{\gamma_{\nu}}=0
  \end{array}
  \right.
  \end{array}
   \end{equation}
   Since $X=\mathbb R^n$, we have that (\ref{GNEP-IC}) is reduced to
   \begin{equation}\label{nlpGNEP-IC}
  \left.
  \begin{array}{l}
 - \textbf{G}d_x-{\cal B}d_y =0\\[4pt]
  {\cal J}g_{\nu}(\bar x)d_x \in DN_{K_{\nu}^{\circ}}(\bar y^{\nu}\,|g_{\nu}(\bar x))(d_{y^{\nu}}),\nu=1,\ldots, N
  \end{array}
  \right\} \Longrightarrow d_x=0,d_y=0.
  \end{equation}
 We have from (\ref{N-Dsd}) that (\ref{nlpGNEP-IC}) is equivalent to
 $$
  \left.
  \begin{array}{l}
  \textbf{G}d_x+
                   \left(\begin{array}{c}
                   {\cal J}_{x^{1}}g_{1,\alpha_{1}}(\bar x)^T[d_y]_{\alpha_{1}}+{\cal J}_{x^{1}}g_{1,\beta_{1}}(\bar x)^T[d_y]_{\beta_{1}}\\[6pt]
                   \vdots\\[3pt]
                   {\cal J}_{x^{N}}g_{N,\alpha_{N}}(\bar x)^T[d_y]_{\alpha_{N}}+{\cal J}_{x^{N}}g_{N,\beta_{N}}(\bar x)^T[d_y]_{\beta_{N}}
                   \end{array}
                   \right)=0\\[4pt]
{\cal J}g_{\nu,\alpha_{\nu}}(\bar x)d_x=0\\[3pt]
  ([d_{y^{\nu}}]_{\beta_{\nu}},  {\cal J}g_{v, \beta_{\nu}}(\bar x)d_x)\in {\rm gph}\, N_{\mathbb R^{|\beta_{\nu}|}_+}\\[4pt]
  \end{array}
  \right\} \Longrightarrow d_x=0,[d_y]_{\alpha_{\nu}\cup\beta_{\nu}}=0.
  $$
  Therefore,  (\ref{nlpGNEP-IC}) is equivalent  to the implication (\ref{ic-nlpth}).  Then it follows from
  Theorem \ref{ThIC} that this condition is equivalent to $S_{{\rm Lag}}$ has the isolated calmness property at $\bar p=(\bar w,0,0)$ for $(\bar x,\bar y)$.  The proof is completed. \hfill $\Box$\\
\section{Consensus LGNEs for shared constrained  problems}
\setcounter{equation}{0}
In this section, we consider the case when $C_{\nu}(x^{-\nu})$ is defined by a shared constraint as follows
\begin{equation}\label{eq:coneCs}
C_{\nu}(x^{-\nu})=\{x^{\nu}\in X_{\nu}:g(x^{\nu},x^{-\nu})\in K\},
\end{equation}
where $X_{\nu}\subset \mathbb R^{n_{\nu}}$ is a nonempty closed convex set, $K \subset {\cal Y}$ is closed convex cone and ${\cal Y}$ is a finitely dimensional Hilbert space. The Lagrangian function associated with (\ref{eq:p1}) with $C_{\nu}(x^{-\nu})$  in
(\ref{eq:coneCs})
is defined by
$$
    l_{\nu}  (x^{\nu}, x^{-\nu}, y^{\nu}): = \theta_{\nu} (x^{\nu},x^{-\nu})
    +\langle y^{\nu}, g(x^{\nu},x^{-{\nu}})\rangle
$$

Consider the following generalized Nash equilibrium problem with canonically perturbations:
\begin{equation}\label{GNEP-ps}
    \textbf{P}_{\nu} (x^{-{\nu}},w,v^{\nu},u)\quad \quad\quad \quad
    \begin{array}{ll}
     \min\limits_{x^{\nu} \in X_{\nu}} & f_{\nu}  (x^{\nu}, x^{-{\nu}}; w)-\langle v^{\nu} , x^{\nu}\rangle\\[4pt]
   {\rm s.t.}& F (x^{\nu}, x^{-{\nu}}; w) - u \in K,
   \end{array}
   \end{equation}
where $w \in \mathbb R^d$, $v^{\nu} \in \mathbb{R}^{n_{\nu}}$ and $ u \in {\cal Y}$ are perturbation parameters. For simplicity, we denote
$p=(w;v^1;\ldots; v^{N};u)\in \mathbb R^d \times \mathbb R^{n_1}\times \cdots \times \mathbb R^{n_N}\times {\cal Y}$.
Let $\bar w \in \mathbb R^d$ be a fixed parameter such that
$$
f_{\nu} (x^{\nu}, x^{-{\nu}};\bar w)=\theta_{\nu}(x^{\nu}, x^{-{\nu}})\quad  \mbox{ and } \quad F (x^{\nu}, x^{-{\nu}};\bar w)=g(x^{\nu}, x^{-{\nu}}).
$$
and the Lagrangian function $l_{\nu}$ becomes
$$
    l_{\nu}  (x^{\nu}, x^{-\nu}, y^{\nu}) = f_{\nu} (x^{\nu},x^{-\nu};\bar w)
    +\langle y^{\nu}, F(x^{\nu},x^{-{\nu}};\bar w)\rangle.
$$
In this setting,  the $\nu$-th player optimization problem (\ref{GNEP-ps}) is a perturbed problem of the problem defined by (\ref{eq:p1}) and (\ref{eq:coneCs}) when
$(w,v^k,u)$ is around $(\bar w,0,0)$.

We denote the feasible set of the $\nu$-th player optimization problem (\ref{GNEP-ps}) by $C_{\nu}(x^{-\nu},w,u)$, namely
$$
C_{\nu}(x^{-\nu},w,u)=\{x^{\nu}\in X_{\nu}: F(x^{\nu}, x^{-{\nu}}; w) - u \in K\}.
$$
The Lagrangian function associated with (\ref{GNEP-ps}) is defined by
$$
    L_{\nu}  (x^{\nu}, x^{-\nu}, y^{\nu} ;w,v^{\nu},u): = f_{\nu} (x^{\nu},x^{-\nu}; w)  -\langle v^{\nu} , x^{\nu}  \rangle
    +\langle y^{\nu}, F(x^{\nu},x^{-{\nu}}; w)-u\rangle
$$
for $\nu=1,...,N$. Under the basic constraint qualification for Problem ($\textbf{P}_{\nu} (\bar x^{-{\nu}},\bar w,0,0)$)  that
$$
\left.
\begin{array}{r}
0\in N_{X_{\nu}}(\bar x^{\nu})+{\rm D}_{x^{\nu}}F(\bar x,\bar w)^*y^k=0\\[4pt]
y^k \in N_{K}(F(\bar x,\bar w))
\end{array}
\right
\} \Rightarrow y^k=0,
$$
there is the formula (c.f.  \cite[Theorem 6.14]{RW98}) that
$$
N_{C_{\nu}(\bar x^{-\nu}, \bar w,0)}(\bar x^{\nu})=N_{X_{\nu}}(\bar x^{\nu})+{\rm D}_{x^{\nu}}F(\bar x,\bar w)^*N_{K}(F(\bar x,\bar w)).
$$
The necessary optimality condition for $\bar x^{\nu}$ being a local solution of $\textbf{P}_{\nu} (\bar x^{-{\nu}},\bar w,0,0)$ is that
\begin{equation}\label{KKT-originals}
0\in \nabla_{x^{\nu}}f_{\nu}(\bar x, \bar w) \in N_{C_{\nu}(\bar x^{-\nu}, \bar w,0)}(\bar x^{\nu}).
\end{equation}
Noting that $K$ is a nonempty closed convex cone, we obtain that condition (\ref{KKT-originals}) is equivalent to
\begin{equation}\label{KKT-originalDs}
\begin{array}{l}
0\in \nabla_{x^{\nu}}L_{\nu}(\bar x, y^{\nu};\bar w,0,0)+N_{X_{\nu}}(\bar x^{\nu}),\\[4pt]
0\in -g(\bar x)+N_{K^{\circ}}(y^{\nu}),
\end{array}
\end{equation}
where $K^{\circ}$ is the polar of $K$.

Assume further that a solution $(\bar x, \bar y^1,\ldots,\bar y^N)$ of the concatenated KKT
systems satisfies $\bar y^1=\cdots=\bar y^N=\bar y$, i.e., the multipliers of the shared constraints
are equal for all players. In this case, we say that $(\bar x,\bar y)$  is a consensus Lagrangian generalized Nash equilibrium.

The requirement of a common Lagrange multiplier is a powerful equilibrium selection mechanism. It picks out a specific, economically meaningful subset of all possible LGNEs. In many applications (like market settings), the consensus LGNE is considered the most relevant solution concept.

For the consensus LGNE, it is reasonable to consider the  KKT mapping when the multipliers of the shared constraints
are equal for all players, namely the mapping  $S^o_{\rm Lag}: \mathbb{R}^d\times \mathbb R^{n}\times {\cal Y}\rightarrow \mathbb{R}^{n} \times {\cal Y}$:
\begin{equation}\label{KKT-o}
S^o_{\rm Lag}(p)=\left
\{(x,y)\in \mathbb R^n \times {\cal Y}:
\begin{array}{l}
0\in \nabla_{x^{\nu}}L_{\nu}(x, y;w,v^{\nu},u)+N_{X_{\nu}}(x^{\nu}),\nu=1,\ldots,N\\[4pt]
0\in -F(x;w)+u+N_{K^{\circ}}(y)
\end{array}
\right\}.
\end{equation}

Define
$$
\textbf{L}(x,y;p)=\left(
\begin{array}{c}
\nabla_{x^1}L_{1}(x, y;w,v^{1},u)\\[2pt]
\cdots\\
\nabla_{x^{N}}L_{N}(x, y;w,v^{N},u)
\end{array}
\right).
$$
Then $S^o_{{\rm Lag}}(p)$ can be expressed as the following compact form
\begin{equation}\label{Lag-compacts}
S^o_{\rm Lag}(p)=\left
\{(x,y)\in \mathbb R^n \times {\cal Y}:0\in \left(
\begin{array}{c}
\textbf{L}(x,y;p)\\[4pt]
-F(x;w)+u
\end{array}
\right)+N_{X \times K^{\circ}}(x,y)\right\}.
\end{equation}
Let $\bar p=(\bar w,0,0)$ and $(\bar x,\bar y) \in S^o_{{\rm Lag}}(\bar p)$. Consider the following set-valued mapping
\begin{equation}\label{Lag-compacts}
L^o_{{\rm Lag}}(\delta)=\left
\{(x,y)\in \mathbb R^n \times {\cal Y}:\delta\in \left(
\begin{array}{c}
\bar q_1 +\textbf{G}x+{\rm D}g(\bar x)^*y\\[4pt]
\bar q_2-{\rm D}g(\bar x)x
\end{array}
\right)+N_{X \times K^{\circ}}(x,y)\right\}.
\end{equation}
where
\begin{equation}\label{eq:qss}
\begin{array}{l}
\bar q_1=\textbf{l}(\bar x,\bar y)-\textbf{G}\bar x-{\rm D}g(\bar x)^*\bar y,\\[4pt]
\bar q_2=-g(\bar x)+{\rm D}g(\bar x)\bar x
\end{array}
\end{equation}
and
\begin{equation}\label{eq:derss}
\begin{array}{l}
\textbf{G}=\left[
\begin{array}{cccc}
\nabla^2_{x^1x^1}l_1(\bar x,\bar y) & \nabla^2_{x^1x^2}l_1(\bar x,\bar y)& \cdots & \nabla^2_{x^1x^N}l_1(\bar x,\bar y)\\[4pt]
\nabla^2_{x^2x^1}l_2(\bar x,\bar y) & \nabla^2_{x^2x^2}l_2(\bar x,\bar y)& \cdots & \nabla^2_{x^2x^N}l_2(\bar x,\bar y)\\[4pt]
\vdots &\vdots & \vdots & \vdots\\[4pt]
\nabla^2_{x^Nx^1}l_N(\bar x,\bar y) & \nabla^2_{x^Nx^2}l_2(\bar x,\bar y)& \cdots & \nabla^2_{x^Nx^N}l_N(\bar x,\bar y)
\end{array}
\right],\, l_{\nu}(x,y)=\theta_{\nu}(x)+\langle y, g(x)\rangle.
\end{array}
\end{equation}
Just like Theorem \ref{ThAubin}, we can easily obtain the following result.
\begin{theorem}\label{ThAubins}
  Suppose $(\bar x,\bar y)\in S_{{\rm Lag}}(\bar p)$. Let $f_{\nu}: \mathbb R^n \times \mathbb R^d\rightarrow \mathbb R$ and $F: \mathbb R^n \times \mathbb R^d\rightarrow {\cal Y}$ be  twice differentiable with respect to $x$
  in a neighborhood of $(\bar x,\bar w)$. Suppose that $f_{\nu}, F$,  $\nabla_x f_{\nu}$ and  ${\rm D}_xF$  are strictly differentiable with respect to $(x,w)$. Then
  $S^o_{{\rm Lag}}$ has Aubin property at $\bar p=(\bar w,0,0)$ for $(\bar x,\bar y)$ if and only if
  \begin{equation}\label{GNEP-Aubins}
  \left.
  \begin{array}{ll}
  \textbf{G}^*w_1-{\rm D}g(\bar x)^*w_2 & \in D^*N_X(\bar x\,|-\bar q_1-\textbf{G}\bar x-{\rm D}g(\bar x)^*\bar y)(-w_1)\\[4pt]
  {\rm D}g(\bar x)w_1 \quad \quad \quad &\in D^*N_{K^{\circ}}(\bar y\,|g(\bar x))(-w_2)
  \end{array}
  \right\} \Longrightarrow w_1=0,w_2=0.
  \end{equation}
  \end{theorem}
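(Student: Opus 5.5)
The plan is to repeat the chain of reductions used for Theorem \ref{ThAubin}, now with the consensus mapping $S^o_{\rm Lag}$ and its linearization $L^o_{\rm Lag}$. There are three steps.

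\textbf{Step 1: transfer the Aubin property to the linearization.} Write
$$
S^o_{\rm Lag}(p)=\{(x,y):0\in f(p,x,y)+N_{X\times K^{\circ}}(x,y)\},\qquad f(p,x,y)=\left(\begin{array}{c}\textbf{L}(x,y;p)\\ -F(x;w)+u\end{array}\right),
$$
and let $h$ be the partial linearization of $f$ in $(x,y)$ at $(\bar p,\bar x,\bar y)$. Then $L^o_{\rm Lag}(\delta)=\{(x,y):\delta\in h(x,y)+N_{X\times K^{\circ}}(x,y)\}$, with $\bar q_1,\bar q_2$ given by (\ref{eq:qss}) and the linear operator
$$
\textbf{M}^o=\left[\begin{array}{ll}\textbf{G} & {\rm D}g(\bar x)^*\\ -{\rm D}g(\bar x) & 0\end{array}\right].
$$
Here the $y$-column of the upper block is ${\rm D}_y\textbf{L}=({\rm D}_{x^1}g(\bar x)^*;\ldots;{\rm D}_{x^N}g(\bar x)^*)$. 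With a single multiplier $y$ this stacked operator is exactly ${\rm D}g(\bar x)^*$.

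The partial derivative in $p=(w,v,u)$ is
$$
{\rm D}_pf=\left[\begin{array}{lll}{\rm D}_w\textbf{L} & -I_n & 0\\ -{\rm D}_wF & 0 & {\cal I}_{\cal Y}\end{array}\right],
$$
which is onto because of the identity blocks in $v$ and $u$. The strict differentiability hypotheses make $h$ a strict first-order approximation of $f$, as in Lemma \ref{first-app}. Then \cite[Theorem 3F.9]{DRockafellar2009}, combined with \cite[Theorem 3E.6]{DRockafellar2009}, gives: $S^o_{\rm Lag}$ has the Aubin property at $\bar p$ for $(\bar x,\bar y)$ if and only if $L^o_{\rm Lag}$ has it at $0$. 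The argument is verbatim that of Proposition \ref{Aubin-Calm}(a).

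\textbf{Step 2: Mordukhovich criterion for the linearization.} Apply Proposition \ref{gders} with $\textbf{M}$ replaced by $\textbf{M}^o$ and $\Theta=X\times K^{\circ}$. The graph-transformation argument there uses only that $\textbf{T}$ is an invertible linear map. Hence the Aubin property of $L^o_{\rm Lag}$ is equivalent to
$$
(\textbf{M}^o)^*w\in D^*N_\Theta(\bar z\,|-\bar q-\textbf{M}^o\bar z)(-w)\Longrightarrow w=0.
$$

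\textbf{Step 3: split the criterion into its components.} Take $w=(w_1,w_2)$. Then
$$
(\textbf{M}^o)^*w=\bigl(\textbf{G}^*w_1-{\rm D}g(\bar x)^*w_2,\ {\rm D}g(\bar x)w_1\bigr).
$$
Because $\Theta=X\times K^{\circ}$ is a product, $N_{{\rm gph}N_\Theta}$ is the product of the normal cones to ${\rm gph}N_X$ and ${\rm gph}N_{K^{\circ}}$ after reordering coordinates. So the coderivative of $N_\Theta$ splits componentwise.

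The base point also needs to be identified. From (\ref{eq:qss}), $-\bar q_2+{\rm D}g(\bar x)\bar x=g(\bar x)$, and the first component of the base point is $-\bar q_1-\textbf{G}\bar x-{\rm D}g(\bar x)^*\bar y$. Substituting these gives exactly (\ref{GNEP-Aubins}).

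\textbf{Main obstacle.} The only step that is not a direct copy of the earlier proofs is checking the adjoint. The upper-right block of $\textbf{M}^o$ maps $y\mapsto({\rm D}_{x^\nu}g(\bar x)^*y)_\nu={\rm D}g(\bar x)^*y$, so its adjoint is ${\rm D}g(\bar x)$. The lower-left block is $-{\rm D}g(\bar x)$, with adjoint $-{\rm D}g(\bar x)^*$. This is what turns the block-diagonal ${\cal B}$ of Theorem \ref{ThAubin} into the full derivative ${\rm D}g(\bar x)$ in the second inclusion of (\ref{GNEP-Aubins}).
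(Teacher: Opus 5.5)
Your proposal is correct and follows the paper's own route. The paper proves this ``just like Theorem \ref{ThAubin}'': it transfers the Aubin property to the linearization $L^o_{\rm Lag}$ by the ample-parameterization argument of Proposition \ref{Aubin-Calm}(a), applies the Mordukhovich-criterion computation of Proposition \ref{gders} with $\textbf{M}$ replaced by the consensus operator, and splits the coderivative of $N_{X\times K^{\circ}}$ componentwise. Your explicit check that $({\rm D}_{x^1}g(\bar x)^*;\ldots;{\rm D}_{x^N}g(\bar x)^*)={\rm D}g(\bar x)^*$, together with the base-point identification $-\bar q_2+{\rm D}g(\bar x)\bar x=g(\bar x)$, supplies exactly the details the paper leaves implicit.
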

  Just like Theorem \ref{ThIC}, we can easily obtain the following result.
  \begin{theorem}\label{ThICs}
  Suppose $(\bar x,\bar y)\in S_{{\rm Lag}}(\bar p)$. Let $f_{\nu}: \mathbb R^n \times \mathbb R^d\rightarrow \mathbb R$ and $F: \mathbb R^n \times \mathbb R^d\rightarrow {\cal Y}$ be  twice differentiable with respect to $x$
  in a neighborhood of $(\bar x,\bar w)$. Suppose that $f_{\nu}, F$,  $\nabla_x f_{\nu}$ and  ${\rm D}_xF$  are strictly differentiable with respect to $(x,w)$. Then
  $S^o_{{\rm Lag}}$ has the isolated calmness property at $\bar p=(\bar w,0,0)$ for $(\bar x,\bar y)$ if and only if
  \begin{equation}\label{GNEP-ICs}
  \left.
  \begin{array}{ll}
 - \textbf{G}d_x-{\rm D}g(\bar x)^*d_y & \in DN_X(\bar x\,|-\bar q_1-\textbf{G}\bar x-{\rm D}g(\bar x)^*\bar y)(d_x)\\[4pt]
  {\rm D}g(\bar x)d_x \quad \quad \quad &\in DN_{K^{\circ}}(\bar y\,|g(\bar x))(d_y)
  \end{array}
  \right\} \Longrightarrow d_x=0,d_y=0.
  \end{equation}
  \end{theorem}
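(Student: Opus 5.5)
The argument follows the route of Theorem \ref{ThIC}, with the consensus mapping in place of $S_{\rm Lag}$. Set $h(x,y)=(\bar q_1+\textbf{G}x+{\rm D}g(\bar x)^*y;\ \bar q_2-{\rm D}g(\bar x)x)$. Then
\[
S^o_{\rm Lag}(p)=\{(x,y):0\in f(p,x,y)+N_{X\times K^{\circ}}(x,y)\},
\]
where $f(p,x,y)=(\textbf{L}(x,y;p);-F(x;w)+u)$, and
\[
L^o_{\rm Lag}(\delta)=\{(x,y):\delta\in h(x,y)+N_{X\times K^{\circ}}(x,y)\}.
\]
The first step is to check that $h$ is the partial linearization of $f$ at $(\bar p,\bar x,\bar y)$. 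The Jacobian of $\textbf{L}$ in $x$ at $(\bar x,\bar y;\bar p)$ is $\textbf{G}$ from (\ref{eq:derss}), since $l_\nu(x,y)=\theta_\nu(x)+\langle y,g(x)\rangle$. Its derivative in $y$ is ${\rm D}g(\bar x)^*$ restricted blockwise, i.e. the stacked operators ${\rm D}_{x^\nu}g(\bar x)^*$, which is the operator ${\rm D}g(\bar x)^*$. The second block has $x$-derivative $-{\rm D}g(\bar x)$. By construction of $\bar q_1,\bar q_2$ in (\ref{eq:qss}), $h(\bar x,\bar y)=f(\bar p,\bar x,\bar y)$.

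The second step is the analogue of Proposition \ref{Aubin-Calm}(b): $S^o_{\rm Lag}$ has isolated calmness at $\bar p$ for $(\bar x,\bar y)$ if and only if $L^o_{\rm Lag}$ does at $0$. By the strict differentiability assumptions, Lemma \ref{first-app} gives that $h$ is a strict first-order approximation of $f$ in $(x,y)$, uniformly in $p$. The key point is ampleness of the parametrization, i.e. that ${\rm D}_pf(\bar p,\bar x,\bar y)$ is onto. In the consensus setting $p=(w;v^1;\ldots;v^N;u)$ with a single $u\in{\cal Y}$, and ${\rm D}_pf$ has the block form
\[
\begin{bmatrix}{\rm D}_w\textbf{L} & -I_n & \ast\\ -{\rm D}_wF & 0 & {\cal I}_{\cal Y}\end{bmatrix},
\]
where $\ast$ denotes the $u$-derivative of $\textbf{L}$. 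That derivative vanishes, because $\nabla_{x^\nu}\langle y,F-u\rangle$ does not depend on $u$. The operator is therefore surjective, and \cite[Theorem 3I.13]{DRockafellar2009} transfers isolated calmness exactly as before. I expect this step to be the main obstacle, though a mild one. One has to confirm that the shared $u$ still gives surjectivity onto the full range $\mathbb R^n\times{\cal Y}$. It does, since the range of $f$ now has a single ${\cal Y}$ component rather than $N$ of them.

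The third step applies Proposition \ref{gders1} verbatim, with $\textbf{M}$ replaced by
\[
\textbf{M}^o=\begin{bmatrix}\textbf{G} & {\rm D}g(\bar x)^*\\ -{\rm D}g(\bar x) & 0\end{bmatrix}
\]
and with $\Theta=X\times K^{\circ}$. Its proof uses only the linear structure and the criterion $DL(0\,|\bar z)(0)=\{0\}$, so it yields: $L^o_{\rm Lag}$ is isolated calm if and only if
\[
-\textbf{M}^od_z\in DN_\Theta(\bar z\,|-\bar q-\textbf{M}^o\bar z)(d_z)\ \Longrightarrow\ d_z=0.
\]

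The final step splits $d_z=(d_x,d_y)$. Because $\Theta$ is a product, $T_{{\rm gph}N_\Theta}$ is the product of $T_{{\rm gph}N_X}$ and $T_{{\rm gph}N_{K^\circ}}$, so $DN_\Theta$ splits componentwise. For the base point, $-\bar q_1-\textbf{G}\bar x-{\rm D}g(\bar x)^*\bar y$ is the $x$-block as written, while the $y$-block is $-\bar q_2+{\rm D}g(\bar x)\bar x=g(\bar x)$. This gives exactly the two inclusions in (\ref{GNEP-ICs}), which completes the proof.
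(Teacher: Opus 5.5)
Your proposal follows essentially the same route as the paper, which proves this result ``just like Theorem~\ref{ThIC}'': linearization to $L^o_{\rm Lag}$, transfer of isolated calmness through the ample parametrization, Proposition~\ref{gders1} with $\textbf{M}$ replaced by your consensus matrix, and a componentwise splitting over $\Theta=X\times K^{\circ}$; your check that ${\rm D}_pf$ stays onto with a single shared $u$ is exactly the point to verify. One caution, shared with the paper: the tangent cone to $\mathrm{gph}\,N_X\times\mathrm{gph}\,N_{K^{\circ}}$ is in general only contained in the product of the two tangent cones (equality holds, e.g., when one of the graphs is geometrically derivable, as for polyhedral sets), so the ``if'' direction of your final step is unconditional while the ``only if'' direction tacitly relies on such derivability.
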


 Now we consider the case when $C_{\nu}(x^{-\nu})$ is defined by  shared equality and inequality constraints:
    \begin{equation}\label{eq:ineqcs}
C_{\nu}(x^{-\nu})=\{x^{\nu}\in \mathbb R^{n_{\nu}}:g(x^{\nu},x^{-\nu})\in K\}\mbox{ with } K=\{0_{q}\} \times \mathbb R^{m-q}_-,
\end{equation}
  where $g: \mathbb R^n \rightarrow \mathbb R^{m}$.

  In this case $S^o_{{\rm Lag}}(p)$ is be expressed as the following form
\begin{equation}\label{Lag-eqIeqCs}
S^o_{\rm Lag}(p)=\left
\{(x,y)\in \mathbb R^n \times \mathbb R^q:
\begin{array}{l}
0=\textbf{L}(x,y;p)\\[4pt]
0= -F_{a}(x;w)+u^{a}\\[4pt]
0\in -F_{b}(x;w)+u^{b}+N_{\mathbb R^{m-q}_+}(y^{b})
\end{array}
\right\},
\end{equation}
where
$$
\textbf{L}(x,y;p)=\left(
\begin{array}{c}
\nabla_{x^1}f_{1}(x;w)+{\cal J}_{x^1}F(x;w)^Ty-v^{1}\\[2pt]
\cdots\\
\nabla_{x^N}f_{N}(x;w)+{\cal J}_{x^N}F(x;w)^Ty-v^{N}
\end{array}
\right),\,\, F=\left(
\begin{array}{c}
F_{a}\\[3pt]
F_{b}
\end{array}
\right), \,\,  u=\left(
\begin{array}{c}
u^{a}\\[3pt]
u^{b}
\end{array}
\right),\,\, y=\left(
\begin{array}{c}
y^{a}\\[3pt]
y^{b}
\end{array}
\right)
$$
 with $F_{a}:\mathbb R^n \times \mathbb R^d \rightarrow \mathbb R^{q}$, $F_{b}:\mathbb R^n \times \mathbb R^d \rightarrow \mathbb R^{m-q}$, and $u^{a},y^{a}\in \mathbb R^{q}$ and $u^{b}, y^{b}\in \mathbb R^{m-q}$.

For  $(\bar x,\bar y)\in S^o_{{\rm Lag}}(\bar p)$, define three sets of indices:
 \begin{equation}\label{3inds}
 \begin{array}{l}
 \alpha=\{1,\ldots, q\} \cup\{i: g_i(\bar x)=0< \bar y_i: i=q+1,\ldots,m\},\\[4pt]
 \beta=\{i: g_i(\bar x)=0=\bar y_i: i=q+1,\ldots,m\},\\[4pt]
 \gamma=\{i: g_i(\bar x)<0=\bar y_i: i=q+1,\ldots,m\}.
\end{array}
 \end{equation}
If for any $d \in \mathbb R^n$  satisfying
$$
{\cal J}_{x^{\nu}}g_{\alpha}(\bar x)d^{\nu}=0, {\cal J}_{x^{\nu}}g_{\beta}(\bar x)d^{\nu}\leq0,
d^{\nu}\ne 0, \nu=1,\ldots, N,
$$
one has that
$$
\langle d^{\nu}, \nabla^2_{x^{\nu}x^{\nu}}l_{\nu}(\bar x,\bar y)d^{\nu}\rangle >0, \nu=1,\ldots, N,
$$
then $\bar x$ be a local generalized Nash equilibrium point for the GNEP problem.

 Let $(\bar x,\bar y) \in S^o_{\rm Lag}(\bar p)$ with $\bar p=(\bar w,0,0)$. Then we have that $\bar y \in \mathbb R^m$ satisfies
 $\bar y\in N_{K}(g(\bar x))$, or $g(\bar x) \in N_{K^{\circ}}(\bar y)$.
 One has, for $K=\{0_q\}\times \mathbb R^{m-q}_-$, $K^{\circ}=\mathbb R^{q}\times \mathbb R^{m-q}_+$ and
 \begin{equation}\label{eq:ngraphs}
 \begin{array}{l}
 N_{{\rm gph}\, N_{K^{\circ}}}(\bar y, g(\bar x))\\[3pt]
 =N_{{\rm gph}\, N_{\mathbb R^q}}(\bar y^{a}, g_{a}(\bar x))\times N_{{\rm gph}\, N_{\mathbb R^{m-q}_+}}(\bar y^{b}, g_{b}(\bar x))\\[3pt]
 = \{0_{|\alpha|}\}\times \mathbb R^{|\alpha|}
 \times \Big[ (\mathbb R_-\times \mathbb R_+) \cup (\{0\}\times \mathbb R)\cup(\mathbb R \times \{0\})\Big]^{|\beta|}\times
 \mathbb R^{|\gamma|}\times \{0_{|\gamma|}\}
 \end{array}
 \end{equation}
 and
   \begin{equation}\label{eq:tgraphs}
 \begin{array}{l}
 T_{{\rm gph}\, N_{K^{\circ}}}(\bar y, g(\bar x))\\[3pt]
 =T_{{\rm gph}\, N_{\mathbb R^{q}}}(\bar y^{a}, g_{a}(\bar x))\times T_{{\rm gph}\, N_{\mathbb R^{m-q}_+}}(\bar y^{b}, g_{b}(\bar x))\\[3pt]
 = \mathbb R^{|\alpha|}\times \{0_{|\alpha|}\}
 \times {\rm ghp}\, N_{\mathbb R^{|\beta|}_+}\times \{0_{|\gamma|}\}\times
 \mathbb R^{|\gamma|}.
 \end{array}
 \end{equation}
 \begin{theorem}\label{svalued-lip-nlps}
  Suppose $(\bar x,\bar y)\in S^o_{{\rm Lag}}(\bar p)$. Let $f_{\nu}: \mathbb R^n \times \mathbb R^d\rightarrow \mathbb R$ and $F: \mathbb R^n \times \mathbb R^d\rightarrow \mathbb R^m$ be  twice differentiable with respect to $x$
  in a neighborhood of $(\bar x,\bar w)$. Suppose that $f_{\nu}, F$,  $\nabla_x f_{\nu}$ and  ${\cal J}_xF$  are strictly differentiable with respect to $(x,w)$. Then $S^o_{{\rm Lag}}$ has a Lipschitz continuous single-valued localization at $\bar p=(\bar w,0,0)$ for $(\bar x,\bar y)$ if and only if, for any partition
  $\beta=\beta^+\cup \beta^0\cup \beta^-$ and $\alpha^+=\alpha\cup \beta^+$,
                   \begin{equation}\label{ec-matrixcs}
                   \left.
                 \begin{array}{l}
                 \textbf{G}^Tw_1={\cal J}g_{\alpha^+}(\bar x)^T[w_2]_{\alpha^+}+{\cal J}g_{\beta^0}(\bar x)^T[w_2]_{\beta^0}\\[6pt]
                 {\cal J}g_{\alpha^+}(\bar x)w_1=0,\, {\cal J}g_{\beta^0}(\bar x)w_1\leq 0,
                 [w_2]_{\beta^0}\geq0
                 \end{array}
                 \right
                 \} \Longrightarrow w_1=0, [w_2]_{\alpha^+\cup\beta^0}=0,
                 \end{equation}
                  where $\textbf{G}$ is defined by (\ref{eq:derss}).
  \end{theorem}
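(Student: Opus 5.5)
The plan mirrors the proof of Theorem \ref{svalued-lip-nlp}, now applied to the consensus mapping $S^o_{\rm Lag}$ with a single multiplier $y\in\mathbb R^m$. First, I would record that Proposition \ref{Aubin-Calm}, Proposition \ref{gders} and Corollary \ref{corSRegularity} carry over verbatim to $S^o_{\rm Lag}$ and $L^o_{\rm Lag}$. The structure $0\in f(p,x,y)+N_{X\times K^{\circ}}(x,y)$ is unchanged. The partial derivative ${\rm D}_pf$ is still onto, because it contains $-I_n$ in the $v$-block and ${\cal I}_{\cal Y}$ in the $u$-block. Here $X=\mathbb R^n$ and $K^{\circ}=\mathbb R^q\times\mathbb R^{m-q}_+$, so $\Theta=\mathbb R^n\times K^{\circ}$ is polyhedral. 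Therefore, by Corollary \ref{corSRegularity} (via \cite[Theorem 1]{DRockafellar96}), the Lipschitz continuous single-valued localization of $S^o_{\rm Lag}$ at $\bar p$ for $(\bar x,\bar y)$ is equivalent to its Aubin property. By Theorem \ref{ThAubins}, that in turn is equivalent to the implication (\ref{GNEP-Aubins}).

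Next, I would specialize (\ref{GNEP-Aubins}). Since $X=\mathbb R^n$, ${\rm gph}\,N_X=\mathbb R^n\times\{0\}$, so $D^*N_X(\bar x\,|\,0)(-w_1)=\{0\}$. The first inclusion therefore becomes the equation $\textbf{G}^Tw_1={\cal J}g(\bar x)^Tw_2$. For the second inclusion, I use (\ref{eq:ngraphs}) together with $(v,-u)\in N_{{\rm gph}\,N_{K^{\circ}}}$, which gives the componentwise description.
\begin{itemize}
\item On $\alpha$: ${\cal J}g_\alpha(\bar x)w_1=0$, with $[w_2]_\alpha$ free.
\item On $\gamma$: $[w_2]_\gamma=0$.
\item On each $i\in\beta$: the pair $(({\cal J}g(\bar x)w_1)_i,[w_2]_i)$ lies in the three-piece union $(\mathbb R_-\times\mathbb R_+)\cup(\{0\}\times\mathbb R)\cup(\mathbb R\times\{0\})$, up to the sign conventions coming from $-w_2$.
\end{itemize}
Substituting $[w_2]_\gamma=0$ into the first equation leaves only the $\alpha$ and $\beta$ columns.

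Then I would decompose the union index by index. Let $\beta^+$ be the indices where the pair falls in $\{0\}\times\mathbb R$, so $({\cal J}g w_1)_i=0$ and $[w_2]_i$ is free; these are merged with $\alpha$ to form $\alpha^+$. Let $\beta^-$ be the indices where $[w_2]_i=0$ and $({\cal J}gw_1)_i$ is free; these drop out of the equation. Let $\beta^0$ be the indices in the quadrant, with the sign convention arranged so that ${\cal J}g_{\beta^0}(\bar x)w_1\le0$ and $[w_2]_{\beta^0}\ge0$. With this decomposition, (\ref{GNEP-Aubins}) holds iff (\ref{ec-matrixcs}) holds for every partition $\beta=\beta^+\cup\beta^0\cup\beta^-$. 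The forward direction holds because any solution of (\ref{ec-matrixcs}), extended by zero on $\beta^-\cup\gamma$, solves the specialized (\ref{GNEP-Aubins}). Conversely, every solution of the specialized system induces such a partition. The conclusion $w_2=0$ reduces to $[w_2]_{\alpha^+\cup\beta^0}=0$, since the remaining components already vanish.

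The main obstacle is the bookkeeping of signs in $D^*N_{K^{\circ}}(\bar y\,|\,g(\bar x))(-w_2)$. The correct orientation of the quadrant $\mathbb R_-\times\mathbb R_+$ (after the sign flip on $w_2$) has to match the inequalities ${\cal J}g_{\beta^0}w_1\le0$ and $[w_2]_{\beta^0}\ge0$ stated in (\ref{ec-matrixcs}). I would settle this first with a one-dimensional check of ${\rm gph}\,N_{\mathbb R_+}$ at $(0,0)$. The rest is routine and identical to the argument for Theorem \ref{svalued-lip-nlp}. The one simplification is that there is a single block $g$ rather than $N$ blocks $g_\nu$, so the sum over $\nu$ disappears and ${\cal J}g$ replaces the block-diagonal ${\cal B}$.
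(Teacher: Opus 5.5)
Your proposal is correct and follows essentially the same route as the paper: reduce the single-valued Lipschitz localization to the Aubin property using polyhedrality of $\Theta$ and Theorem~\ref{ThAubins}, specialize the coderivative of $N_{K^{\circ}}$ via (\ref{eq:ngraphs}), and split $\beta$ according to which piece of the three-piece union each pair falls in. Your explicit two-direction argument (extend $w_2$ by zero on $\beta^-\cup\gamma$ in one direction, read the partition off a given solution in the other) fills in a step the paper only asserts. The sign check also resolves without any flip, because the inclusion amounts to $({\cal J}g(\bar x)w_1,w_2)\in N_{{\rm gph}\,N_{K^{\circ}}}(\bar y,g(\bar x))$, which puts the $\beta^0$ pairs directly in $\mathbb R_-\times\mathbb R_+$.
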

  {\bf Proof}. We apply Theorem \ref{ThAubins} to develop Aubin property of $S^o_{{\rm Lag}}$ at $\bar p=(\bar w,0,0)$ for $(\bar x,\bar y)$. From (\ref{eq:ngraphs}), we have that the inclusion ${\rm D}g(\bar x)w_1 \in D^*N_{K^{\circ}}(\bar y\,|g(\bar x))(-w_2)$ in (\ref{GNEP-Aubins}) is equivalent to

 \begin{equation}\label{N-Dss}
 \begin{array}{l}
     {\cal J}g_{\alpha}(\bar x)w_1=0\\[3pt]
  ({\cal J}g_{\beta}(\bar x)w_1,[w_2]_{\beta})\in \Big[ (\mathbb R_-\times \mathbb R_+) \cup (\{0\}\times \mathbb R)\cup(\mathbb R \times \{0\})\Big]^{|\beta_{\nu}|}\\[4pt]
  [w_2]_{\gamma}=0
  \end{array}
       \end{equation}
   Since $X=\mathbb R^n$, we have that (\ref{GNEP-Aubins}) is reduced to
   \begin{equation}\label{eq:nlpAubinss}
  \left.
  \begin{array}{l}
  \textbf{G}^*w_1-{\cal J}g(\bar x)^Tw_2 =0\\[4pt]
 {\cal J}g(\bar x)w_1 \in D^*N_{K^{\circ}}(\bar y\,|g(\bar x)(-w_2)
  \end{array}
  \right\} \Longrightarrow w_1=0,w_2=0.
  \end{equation}
 We have from (\ref{N-Dss}) that (\ref{eq:nlpAubinss}) is equivalent to
 $$
  \left.
  \begin{array}{l}
  \textbf{G}^*w_1={\cal J} g_{\alpha}(\bar x)^T[w_2]_{\alpha}+ {\cal J} g_{\beta}(\bar x)^T[w_2]_{\beta}\\[4pt]
{\cal J} g_{\alpha}(\bar x)w_1=0\\[3pt]
  ({\cal J}g_{\beta}(\bar x)w_1,[w_2]_{\beta})\in \Big[ (\mathbb R_-\times \mathbb R_+) \cup (\{0\}\times \mathbb R)\cup(\mathbb R \times \{0\})\Big]^{|\beta|}\\[4pt]
  \end{array}
  \right\} \Longrightarrow w_1=0,[w_2]_{\alpha \cup\beta}=0.
  $$
  Therefore,  (\ref{eq:nlpAubinss}) is equivalent to that,
   for any partition
  $\beta=\beta^+\cup \beta^0\cup \beta^-$ and $\alpha^+=\alpha\cup \beta^+$, the implication (\ref{ec-matrixcs}) holds.  Then it follows from
  Theorem \ref{ThAubins} that this condition is equivalent to $S_{{\rm Lag}}$ has Aubin property at $\bar p=(\bar w,0,0)$ for $(\bar x,\bar y)$, which from Corollary \ref{corSRegularity}, is equivalent to the property that  $S^o_{{\rm Lag}}$ has a Lipschitz continuous single-valued localization at $\bar p=(\bar w,0,0)$ for $(\bar x,\bar y)$. The proof is completed. \hfill $\Box$
  \begin{corollary}\label{cor:scs}
  Suppose $(\bar x,\bar y)\in S^o_{{\rm Lag}}(\bar p)$ and $\beta_{\nu}=\emptyset$ for every $\nu=1,\ldots,N$. Let $f_{\nu}: \mathbb R^n \times \mathbb R^d\rightarrow \mathbb R$ and $F: \mathbb R^n \times \mathbb R^d\rightarrow \mathbb R^m$ be  twice differentiable with respect to $x$
  in a neighborhood of $(\bar x,\bar w)$. Suppose that $f_{\nu}, F$,  $\nabla_x f_{\nu}$ and  ${\cal J}_xF$  are strictly differentiable with respect to $(x,w)$. Then $S^o_{{\rm Lag}}$ has a Lipschitz continuous single-valued localization at $\bar p=(\bar w,0,0)$ for $(\bar x,\bar y)$ if and only if
  the matrix
                 \begin{equation}\label{ec-matrixSc}
                 \left[
                 \begin{array}{ll}
                 \textbf{G}^T & -{\cal J}g_{\alpha}(\bar x)^T\\[4pt]
                 {\cal J}g_{\alpha}(\bar x) & 0
                 \end{array}
                 \right
                 ]
                 \end{equation}
                 is nonsingular, where $\textbf{G}$ is defined by (\ref{eq:derss}).
  \end{corollary}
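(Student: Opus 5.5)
The plan is to specialize Theorem \ref{svalued-lip-nlps} to the case $\beta=\emptyset$. This is exactly the route by which Corollary \ref{cor:sc} follows from Theorem \ref{svalued-lip-nlp}. (The hypothesis ``$\beta_\nu=\emptyset$ for every $\nu$'' should be read as $\beta=\emptyset$ for the single shared index set defined in (\ref{3inds}).) Under the stated smoothness assumptions, Theorem \ref{svalued-lip-nlps} says that $S^o_{\rm Lag}$ has a Lipschitz continuous single-valued localization at $\bar p$ for $(\bar x,\bar y)$ if and only if the implication (\ref{ec-matrixcs}) holds for every partition of $\beta$.

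When $\beta=\emptyset$ there is only the trivial partition $\beta^+=\beta^0=\beta^-=\emptyset$, so $\alpha^+=\alpha$. All terms indexed by $\beta^0$ then disappear, and (\ref{ec-matrixcs}) reduces to
$$
\left.\begin{array}{l}
\textbf{G}^Tw_1-{\cal J}g_{\alpha}(\bar x)^T[w_2]_{\alpha}=0\\[4pt]
{\cal J}g_{\alpha}(\bar x)w_1=0
\end{array}\right\}\Longrightarrow w_1=0,\ [w_2]_{\alpha}=0 .
$$
This is precisely the statement that the homogeneous linear system with coefficient matrix (\ref{ec-matrixSc}), acting on $(w_1,[w_2]_\alpha)$, has only the trivial solution. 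Since that matrix is square of order $n+|\alpha|$, this is equivalent to its nonsingularity, and combining the two equivalences gives the corollary.

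The only point needing care is the bookkeeping of $[w_2]_\gamma$. In (\ref{N-Dss}) the coderivative inclusion already forces $[w_2]_\gamma=0$, so the conclusion $w_2=0$ in (\ref{eq:nlpAubinss}) is automatically the same as $[w_2]_\alpha=0$. One should also check that the sign convention ($-{\cal J}g_\alpha^T$ in the upper right block versus $+{\cal J}g_\alpha$ in the lower left) matches the reduced system. Flipping the sign of a block row does not affect singularity, so the form of the matrix in (\ref{ec-matrixSc}) is immaterial. No real obstacle is expected beyond this routine verification.
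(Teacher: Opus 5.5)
Your proposal is correct and follows the paper's intended route: the corollary has no separate proof in the paper and is the $\beta=\emptyset$ specialization of Theorem~\ref{svalued-lip-nlps}, where only the trivial partition remains and the implication reduces to the homogeneous system for the square matrix having only the trivial solution. Your remarks on $[w_2]_\gamma$ and the block signs are accurate but not needed, since the theorem's conclusion already reads $w_1=0$, $[w_2]_{\alpha^+\cup\beta^0}=0$ and the matrix as written matches the reduced system exactly.
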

  Now, we propose a set of second-order sufficient optimality conditions for a  vector $\bar x$ being a generalized Nash equilibrium point.
Suppose $S^o_{{\rm Lag}}(\bar p)\ne \emptyset$, let  $(\bar x,\bar y)\in S^o_{{\rm Lag}}(\bar p)$, define
  $$
  {\cal C}(\bar x,\bar y)=\{d\in \mathbb R^n:  {\cal J}g_{\alpha}(\bar x)d=0,  {\cal J}g_{\beta}(\bar x)d\leq 0\}.
  $$
  We propose the following second-order optimality condition:
  \begin{equation}\label{2ndconds}
  d \in {\cal C}(\bar x,\bar y), d\ne 0 \Longrightarrow \langle d, \textbf{G}^Td \rangle>0.
  \end{equation}
  \begin{proposition}\label{a-suff}
  Let $f_{\nu}: \mathbb R^n \times \mathbb R^d\rightarrow \mathbb R$ and $F: \mathbb R^n \times \mathbb R^d\rightarrow \mathbb R^m$ be  twice differentiable with respect to $x$
  in a neighborhood of $(\bar x,\bar w)$. Suppose that $f_{\nu}, F$,  $\nabla_x f_{\nu}$ and  ${\cal J}_xF$  are strictly differentiable with respect to $(x,w)$. Suppose $S^o_{{\rm Lag}}(\bar p)\ne \emptyset$ and let  $(\bar x,\bar y)\in S^o_{{\rm Lag}}(\bar p)$. If the second-order optimality condition (\ref{2ndconds}) holds, then $\bar x$ is a local generalized Nash equilibrium point.
  \end{proposition}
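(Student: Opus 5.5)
The plan is to reduce the statement to the classical second-order sufficient condition of nonlinear programming, applied to each player's problem separately. The key is to use block-sparse directions: test the condition (\ref{2ndconds}) only on vectors $d$ whose components vanish outside the $\nu$-th block.

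\emph{Step 1: each player is at a KKT point.} Fix $\nu\in\{1,\ldots,N\}$ and consider player $\nu$'s problem $\textbf{P}_{\nu}(\bar x^{-\nu},\bar w,0,0)$:
$$
\min_{x^{\nu}\in\mathbb R^{n_\nu}}\ \theta_\nu(x^\nu,\bar x^{-\nu})\quad {\rm s.t.}\quad g(x^\nu,\bar x^{-\nu})\in K=\{0_q\}\times\mathbb R^{m-q}_-.
$$
From $(\bar x,\bar y)\in S^o_{\rm Lag}(\bar p)$, i.e. (\ref{Lag-eqIeqCs}) at $p=\bar p$ with $X=\mathbb R^n$, we read off the following facts:
\begin{itemize}
\item $\nabla_{x^\nu}\theta_\nu(\bar x)+{\cal J}_{x^\nu}g(\bar x)^T\bar y=0$;
\item $g_a(\bar x)=0$;
\item $0\ge g_b(\bar x)\perp \bar y^b\ge 0$.
\end{itemize}
So $\bar x^\nu$ is feasible for player $\nu$ and $(\bar x^\nu,\bar y)$ is a KKT pair of the player's problem. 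The multiplier is the common $\bar y$, and the index sets $\alpha,\beta,\gamma$ of (\ref{3inds}) are exactly the index sets of that KKT pair.

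\emph{Step 2: restrict the second-order condition to one block.} For $d^\nu\in\mathbb R^{n_\nu}$, set $d=(0,\ldots,0,d^\nu,0,\ldots,0)\in\mathbb R^n$. Then ${\cal J}g_{\alpha}(\bar x)d={\cal J}_{x^\nu}g_\alpha(\bar x)d^\nu$, and similarly for $\beta$. Hence $d\in{\cal C}(\bar x,\bar y)$ if and only if $d^\nu$ lies in
$$
{\cal C}_\nu=\{d^\nu: {\cal J}_{x^\nu}g_\alpha(\bar x)d^\nu=0,\ {\cal J}_{x^\nu}g_\beta(\bar x)d^\nu\le 0\}.
$$
This ${\cal C}_\nu$ is precisely the critical cone of player $\nu$'s problem at $\bar x^\nu$ relative to the multiplier $\bar y$. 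Moreover, by the block structure (\ref{eq:derss}) of $\textbf{G}$,
$$
\langle d,\textbf{G}^Td\rangle=\langle d^\nu,\nabla^2_{x^\nu x^\nu}l_\nu(\bar x,\bar y)d^\nu\rangle.
$$
Therefore (\ref{2ndconds}) implies that $\langle d^\nu,\nabla^2_{x^\nu x^\nu}l_\nu(\bar x,\bar y)d^\nu\rangle>0$ for every nonzero $d^\nu\in{\cal C}_\nu$. This is the standard second-order sufficient condition for player $\nu$, taken with the single multiplier $\bar y$.

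\emph{Step 3: invoke the NLP sufficiency theorem.} The smoothness assumptions on $f_\nu$ and $F$ at $\bar w$ make $\theta_\nu(\cdot,\bar x^{-\nu})$ and $g(\cdot,\bar x^{-\nu})$ twice differentiable at $\bar x^\nu$. The classical theorem then says that a KKT point satisfying this condition is a strict local minimizer; no constraint qualification is needed for sufficiency. If one prefers a self-contained argument, the usual contradiction proof works here. Suppose feasible $x_k^\nu\to\bar x^\nu$ satisfy $\theta_\nu(x_k^\nu,\bar x^{-\nu})\le\theta_\nu(\bar x)+o(\|x_k^\nu-\bar x^\nu\|^2)$. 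Normalize $d_k=(x_k^\nu-\bar x^\nu)/t_k$ and pass to a limit direction $d^\nu\ne0$. First-order expansions of the constraints and of the objective, combined with complementarity, show $d^\nu\in{\cal C}_\nu$. A second-order expansion of $l_\nu(\cdot,\bar x^{-\nu},\bar y)$ along $x_k^\nu$ then gives $\langle d^\nu,\nabla^2_{x^\nu x^\nu}l_\nu d^\nu\rangle\le 0$, contradicting Step 2. Since $\nu$ is arbitrary, $\bar x$ is a local generalized Nash equilibrium in the sense of (\ref{LGNE}) at $\bar p$.

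The main obstacle I expect is in Step 3, in checking that the limit direction lies in the critical cone. The delicate part is the components in $\alpha$: their inequalities become equalities in the cone because $\bar y_i>0$ and $\langle\nabla_{x^\nu}\theta_\nu,d^\nu\rangle\le0$. This is routine NLP bookkeeping; beyond it, the only genuine point is the observation in Step 2 that the off-diagonal blocks of $\textbf{G}$ play no role for block-sparse directions.
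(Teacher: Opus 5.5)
Your proposal is correct and follows the paper's route. You read off each player's KKT conditions with the common multiplier $\bar y$ and test (\ref{2ndconds}) on block-sparse directions $\tilde d=(0,\ldots,d^\nu,\ldots,0)$ to isolate the diagonal block $\nabla^2_{x^\nu x^\nu}l_\nu(\bar x,\bar y)$. You then invoke the classical second-order sufficient condition for player $\nu$'s problem; your Step 3 only adds the standard contradiction proof of that theorem, which the paper cites without proof.
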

  {\bf Proof}. For $\nu$-player, since $(\bar x,\bar y)\in S^o_{{\rm Lag}}(\bar p)$, one has that
  $$
  \nabla_{x^{\nu}}l_{\nu}(\bar x, \bar y)=0,\,\, \bar y \in N_{K}g(\bar x),
  $$
  which implies the KKT conditions hold at $(\bar x^{\nu}, \bar y)$ for  $\nu$-player. For any
  $d^{\nu} \in \mathbb R^{\nu}$, $d^{\nu}\ne 0$, satisfying
  $$
  {\cal J}_{x^{\nu}}g_{\alpha}(\bar x)d^{\nu}=0,\,\, {\cal J}_{x^{\nu}}g_{\beta}(\bar x)d^{\nu}\leq0,
  $$
  we have that the vector $\tilde d \in \mathbb R^n$ defined by $\tilde d^{\nu}=d^{\nu}$ and $\tilde d^{-\nu}=0$ satisfies $\tilde d \in {\cal C}(\bar x,\bar y)$ with $\tilde d \ne 0$. Then it follows from (\ref{2ndconds}) that
  $$
  \tilde d^T\textbf{G}^T\tilde d>0,
  $$
  which implies $\langle d^{\nu}, \nabla^2_{x^{\nu}x^{\nu}}l(\bar x,\bar y)d^{\nu}\rangle >0$. This implies the second-order sufficiency optimality conditions for problem $\textbf{P}_{\nu} (\bar{x}^{-\nu},\bar w, 0,0)$ at $(\bar x^{\nu},\bar y)$ and $\bar x$ is a strict local minimum point of
  problem $\textbf{P}_{\nu} (\bar{x}^{-\nu},\bar w, 0,0)$. This shows that $\bar x$ is  local generalized Nash equilibrium point.\hfill $\Box$\\
  Interestingly, we may use the second-order optimality condition (\ref{2ndconds}) to derive the Lipschitz continuous single-valued localization of $S^o_{{\rm Lag}}$ at $\bar p=(\bar w,0,0)$ for $(\bar x,\bar y)$, which is described in the next proposition.

  \begin{proposition}\label{a-suff-ic}
  Let $f_{\nu}: \mathbb R^n \times \mathbb R^d\rightarrow \mathbb R$ and $F: \mathbb R^n \times \mathbb R^d\rightarrow \mathbb R^m$ be  twice differentiable with respect to $x$
  in a neighborhood of $(\bar x,\bar w)$. Suppose that $f_{\nu}, F$,  $\nabla_x f_{\nu}$ and  ${\cal J}_xF$  are strictly differentiable with respect to $(x,w)$. Suppose $S^o_{{\rm Lag}}(\bar p)\ne \emptyset$ and let  $(\bar x,\bar y)\in S^o_{{\rm Lag}}(\bar p)$. If
   \begin{itemize}
   \item[{\rm (i)}]the set of vectors $\{\nabla g_i(\bar x):i \in \alpha \cup \beta\}$ are linearly independent;
   \item[{\rm (ii)}]the second-order optimality condition (\ref{2ndconds}) holds.
   \end{itemize}
   Then $\bar x$ is a local generalized Nash equilibrium point and $S^o_{{\rm Lag}}$ has a Lipschitz continuous single-valued localization at $\bar p=(\bar w,0,0)$ for $(\bar x,\bar y)$.
  \end{proposition}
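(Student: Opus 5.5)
The first claim, that $\bar x$ is a local generalized Nash equilibrium, follows at once from Proposition \ref{a-suff}, since (ii) is exactly its hypothesis. The real work is the Lipschitz continuous single-valued localization. For that I will verify the implication (\ref{ec-matrixcs}) of Theorem \ref{svalued-lip-nlps} for every partition $\beta=\beta^+\cup\beta^0\cup\beta^-$, with $\alpha^+=\alpha\cup\beta^+$.

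Fix such a partition and let $(w_1,w_2)$ satisfy the premises of (\ref{ec-matrixcs}). Then
$$
{\cal J}g_{\alpha}(\bar x)w_1=0\quad\mbox{and}\quad {\cal J}g_{\beta}(\bar x)w_1\le 0 .
$$
The $\alpha$ rows are zero because $\alpha\subset\alpha^+$. On $\beta^+$ the rows are zero, on $\beta^0$ they are $\le 0$, and on $\beta^-$ the implication imposes nothing. Here I must be careful: in the coderivative description (\ref{N-Dss}), indices in $\beta^-$ correspond to the component $\mathbb R_-\times\mathbb R_+$ or $\mathbb R\times\{0\}$. So I will go back to the componentwise form $({\cal J}g_\beta(\bar x)w_1,[w_2]_\beta)\in[(\mathbb R_-\times\mathbb R_+)\cup(\{0\}\times\mathbb R)\cup(\mathbb R\times\{0\})]^{|\beta|}$. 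This form is equivalent to the union over partitions and is what the proof of Theorem \ref{svalued-lip-nlps} actually uses.

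In that form, every $i\in\beta$ has $[w_2]_i=0$ or ${\cal J}g_i(\bar x)w_1=0$, or else ${\cal J}g_i(\bar x)w_1\le 0\le[w_2]_i$. In every case
$$
[w_2]_i\,\langle\nabla g_i(\bar x),w_1\rangle\le 0 ,
$$
and ${\cal J}g_i(\bar x)w_1\le 0$ whenever $[w_2]_i\neq 0$.

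The key computation pairs the first equation with $w_1$:
$$
\langle w_1,\textbf{G}^Tw_1\rangle=\sum_{i\in\alpha\cup\beta}[w_2]_i\langle\nabla g_i(\bar x),w_1\rangle\le 0,
$$
since the $\alpha$ terms vanish and the $\beta$ terms are nonpositive.

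The main obstacle is that $w_1$ need not lie in ${\cal C}(\bar x,\bar y)$. For an index $i\in\beta$ with $[w_2]_i=0$, the $\mathbb R\times\{0\}$ branch allows ${\cal J}g_i(\bar x)w_1>0$. My first attempt will be to read (ii) as holding on the cone where ${\cal J}g_{\beta}(\bar x)d\le 0$ is only required on indices carrying nonzero multipliers. Failing that, I will use the strong form of the condition on the affine hull $\{d:{\cal J}g_\alpha(\bar x)d=0\}$, which is the standard assumption behind Robinson-type strong regularity. In the favorable case $w_1\in{\cal C}(\bar x,\bar y)$, the second-order condition (\ref{2ndconds}) together with $\langle w_1,\textbf{G}^Tw_1\rangle\le0$ forces $w_1=0$. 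I will state clearly which interpretation of (ii) is used.

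Once $w_1=0$, the first equation becomes ${\cal J}g_{\alpha\cup\beta}(\bar x)^T[w_2]_{\alpha\cup\beta}=0$. The linear independence in (i) then gives $[w_2]_{\alpha\cup\beta}=0$, and hence $[w_2]_{\alpha^+\cup\beta^0}=0$. So (\ref{ec-matrixcs}) holds for every partition, and Theorem \ref{svalued-lip-nlps} yields the Lipschitz continuous single-valued localization of $S^o_{{\rm Lag}}$ at $\bar p=(\bar w,0,0)$ for $(\bar x,\bar y)$.
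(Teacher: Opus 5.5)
You have located exactly the weak point, but it cannot be repaired under hypothesis (ii) as stated, because the Lipschitz-localization claim is then false. The paper's own proof asserts that the last two relations in (\ref{eq:kkts}) give $w_1\in{\cal C}(\bar x,\bar y)$. That step ignores that ${\cal J}g_{\beta^-}(\bar x)w_1$ is unconstrained, which is the same point you raise, and it genuinely fails.

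Here is a counterexample. Take $N=2$, $n_1=n_2=1$, $X=\mathbb R^2$, and the shared constraint $g(x)=(-x_1,-x_2)\in K=\mathbb R^2_-$ (so $q=0$, $m=2$). Let $\theta_1(x)=\frac12x_1^2+3x_1x_2$ and $\theta_2(x)=\frac12x_2^2+3x_1x_2$, with $f_\nu(\cdot;w)=\theta_\nu$ and $F(\cdot;w)=g$. At $(\bar x,\bar y)=(0,0)$ one has $\alpha=\emptyset$, $\beta=\{1,2\}$, and (i) holds. Moreover ${\cal C}(\bar x,\bar y)=\mathbb R^2_+$ and $\textbf{G}=\begin{pmatrix}1&3\\3&1\end{pmatrix}$. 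Since $\langle d,\textbf{G}^Td\rangle=d_1^2+d_2^2+6d_1d_2>0$ on $\mathbb R^2_+\setminus\{0\}$, (ii) holds. Yet for $v^1=v^2=\varepsilon>0$ and $u=0$, the set $S^o_{\rm Lag}(\bar w,v,0)$ contains three distinct points: $((\varepsilon,0),(0,2\varepsilon))$, $((0,\varepsilon),(2\varepsilon,0))$ and $((\varepsilon/4,\varepsilon/4),(0,0))$. All of them tend to $(\bar x,\bar y)$ as $\varepsilon\downarrow 0$, so no single-valued localization exists. For $N=1$ this is the classical fact that LICQ plus SOSC on the critical cone does not give strong regularity; here the LCP matrix is not a $P$-matrix.

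In terms of (\ref{ec-matrixcs}), take $\beta^0=\{1\}$ and $\beta^-=\{2\}$. The pair $w_1=(1,-3)$, $[w_2]_1=8$ satisfies all premises. However ${\cal J}g_2(\bar x)w_1=3>0$, so $w_1\notin{\cal C}(\bar x,\bar y)$, and $\langle w_1,\textbf{G}^Tw_1\rangle=-8$.

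Hence your proposal does not prove the stated proposition, and no proof can. What you can prove is a corrected version. Your first reading of (ii) is not a well-defined hypothesis, since it refers to the unknown $w_2$. Taken uniformly over all sign patterns, it includes the pattern in which no $\beta$-row is constrained, so it reduces to your second reading: $\langle d,\textbf{G}^Td\rangle>0$ for all $d\neq 0$ with ${\cal J}g_\alpha(\bar x)d=0$. This is the analogue of Robinson's strong second-order condition. It is strictly stronger than (\ref{2ndconds}); the example above violates it at $d=(1,-1)$.

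Under this stronger condition your argument is complete, and it is also the right repair of the paper's proof. Your componentwise estimate $\langle w_1,\textbf{G}^Tw_1\rangle=\sum_{i\in\alpha\cup\beta}[w_2]_i\langle\nabla g_i(\bar x),w_1\rangle\le 0$, together with ${\cal J}g_\alpha(\bar x)w_1=0$, gives $w_1=0$. Then (i) forces $[w_2]_{\alpha^+\cup\beta^0}=0$.

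Your derivation of the first claim from Proposition \ref{a-suff} is fine. It only uses (\ref{2ndconds}) along single-player directions, and those do lie in ${\cal C}(\bar x,\bar y)$.
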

  {\bf Proof}. We only need to prove that $S^o_{{\rm Lag}}$ has a Lipschitz continuous single-valued localization at $\bar p=(\bar w,0,0)$ for $(\bar x,\bar y)$ if (i) and (ii) are satisfied. For doing this, we verify the implication (\ref{ec-matrixcs}) for
    any partition
  $\beta=\beta^+\cup \beta^0\cup \beta^-$ and $\alpha^+=\alpha\cup \beta^+$. Let $\beta=\beta^+\cup \beta^0\cup \beta^-$ be a partition of
  $\beta$, consider the system of inequalities
  \begin{equation}\label{eq:kkts}
  \begin{array}{l}
                 \textbf{G}^Tw_1={\cal J}g_{\alpha^+}(\bar x)^T[w_2]_{\alpha^+}+{\cal J}g_{\beta^0}(\bar x)^T[w_2]_{\beta^0}\\[6pt]
                 {\cal J}g_{\alpha^+}(\bar x)w_1=0,\, {\cal J}g_{\beta^0}(\bar x)w_1\leq 0,
                 [w_2]_{\beta^0}\geq0.
                 \end{array}
  \end{equation}
  The last two inequalities of (\ref{eq:kkts}) imply $w_1 \in {\cal C}(\bar x,\bar y)$.
  Premultiplying $w_1^T$ to the first equality of (\ref{eq:kkts}) yields
  $$
  \langle w_1, \textbf{G}^Tw_1\rangle =\langle w_1, {\cal J}g_{\alpha^+}(\bar x)^T[w_2]_{\alpha^+}+{\cal J}g_{\beta^0}(\bar x)^T[w_2]_{\beta^0}\rangle=\langle w_1, {\cal J}g_{\beta^0}(\bar x)^T[w_2]_{\beta^0}\rangle
  $$
  Since the value on the left-hand side is nonnegative and the value on the right-hand side is nonpositive, from condition (ii), we obtain $w_1=0$.
  Thus we obtain from the first equation of (\ref{eq:kkts}) that
  $$
   0={\cal J}g_{\alpha^+}(\bar x)^T[w_2]_{\alpha^+}+{\cal J}g_{\beta^0}(\bar x)^T[w_2]_{\beta^0}.
                 $$
  From (i), we know that the set of vectors $\{\nabla g_i(\bar x):i \in \alpha \cup \beta^0\}$ are linearly independent, and the above equality implies $[w_2]_{\alpha^+\cup \beta^0}=0$. Thus the implication (\ref{ec-matrixcs}) holds
   for any partition
  $\beta=\beta^+\cup \beta^0\cup \beta^-$ and $\alpha^+=\alpha\cup \beta^+$. \hfill $\Box$

  Now we turn to discussing the isolated calmness property of $S^o_{{\rm Lag}}$  at $\bar p=(\bar w,0,0)$ for $(\bar x,\bar y)$ by using Theorem
  \ref{ThICs}.
  \begin{theorem}\label{th-ic-nlps}
 Suppose $(\bar x,\bar y)\in S^o_{{\rm Lag}}(\bar p)$. Let $f_{\nu}: \mathbb R^n \times \mathbb R^d\rightarrow \mathbb R$ and $F: \mathbb R^n \times \mathbb R^d\rightarrow \mathbb R^m$ be  twice differentiable with respect to $x$
  in a neighborhood of $(\bar x,\bar w)$. Suppose that $f_{\nu}, F$,  $\nabla_x f_{\nu}$ and  ${\cal J}_xF$  are strictly differentiable with respect to $(x,w)$.  Then $S^o_{{\rm Lag}}$ has the isolated calmness property at $\bar p=(\bar w,0,0)$ for $(\bar x,\bar y)$ if and only if
  \begin{equation}\label{ic-nlpths}
                  \left.
                 \begin{array}{l}
                 \textbf{G}d_x
                   +{\cal J}g_{\alpha}(\bar x)^T[d_y]_{\alpha}+{\cal J}g_{\beta}(\bar x)^T[d_y]_{\beta}=0\\[6pt]
                 {\cal J}g_{\alpha}(\bar x)d_x=0,\\[6pt]
                  0\geq {\cal J}g_{\beta}(\bar x)d_x \perp [d_y]_{\beta}\geq 0,\nu=1,\ldots,N
                                 \end{array}
                 \right
                 \} \Longrightarrow d_x=0,[d_y]_{\alpha\cup\beta}=0,
                 \end{equation}
                  where $\textbf{G}$ is defined by (\ref{eq:derss}).
  \end{theorem}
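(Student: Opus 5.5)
We specialize Theorem \ref{ThICs} to the shared equality/inequality setting, in the same way Theorem \ref{th-ic-nlp} was derived from Theorem \ref{ThIC}. First, since $X=\mathbb R^n$, we have ${\rm gph}\,N_X=\mathbb R^n\times\{0\}$. Its tangent cone at any point is again $\mathbb R^n\times\{0\}$, so $DN_X(\bar x\,|\,0)(d_x)=\{0\}$ for every $d_x$. The first inclusion in (\ref{GNEP-ICs}) therefore reduces to the linear equation $-\textbf{G}d_x-{\cal J}g(\bar x)^Td_y=0$. Here we use that ${\rm D}g(\bar x)^*={\cal J}g(\bar x)^T$. We also note that the base point $-\bar q_1-\textbf{G}\bar x-{\rm D}g(\bar x)^*\bar y=-\textbf{l}(\bar x,\bar y)=0$, because $(\bar x,\bar y)\in S^o_{\rm Lag}(\bar p)$.

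Second, we treat the inclusion ${\cal J}g(\bar x)d_x\in DN_{K^\circ}(\bar y\,|\,g(\bar x))(d_y)$. By definition of the graph derivative, it means $(d_y,{\cal J}g(\bar x)d_x)\in T_{{\rm gph}\,N_{K^\circ}}(\bar y,g(\bar x))$. We read off this tangent cone from (\ref{eq:tgraphs}), after permuting coordinates according to the index sets $\alpha,\beta,\gamma$ of (\ref{3inds}). This gives three conditions:
\begin{itemize}
\item on $\alpha$: $[d_y]_\alpha$ is free and ${\cal J}g_\alpha(\bar x)d_x=0$;
\item on $\beta$: $([d_y]_\beta,{\cal J}g_\beta(\bar x)d_x)\in{\rm gph}\,N_{\mathbb R^{|\beta|}_+}$, i.e.\ $0\le [d_y]_\beta\perp {\cal J}g_\beta(\bar x)d_x\le 0$;
\item on $\gamma$: $[d_y]_\gamma=0$ and ${\cal J}g_\gamma(\bar x)d_x$ is free.
\end{itemize}
Justifying this product formula is the only nonroutine point. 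Each coordinate is one of three types: a strictly complementary active pair, an inactive pair, or a biactive pair. Locally, ${\rm gph}\,N_{\mathbb R_+}$ is a line, a line, or the union of two half-lines, respectively. The graph is polyhedral, so its tangent cone equals the product of the componentwise tangent cones. This justifies (\ref{eq:tgraphs}).

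Third, we substitute $[d_y]_\gamma=0$ into ${\cal J}g(\bar x)^Td_y$, which becomes ${\cal J}g_\alpha(\bar x)^T[d_y]_\alpha+{\cal J}g_\beta(\bar x)^T[d_y]_\beta$. The hypothesis side of (\ref{GNEP-ICs}) then becomes exactly the hypothesis side of (\ref{ic-nlpths}), up to the overall sign in the first equation. Since $[d_y]_\gamma=0$ is forced, the conclusion $d_x=0,d_y=0$ is equivalent to $d_x=0,[d_y]_{\alpha\cup\beta}=0$. Hence the two implications are equivalent, and Theorem \ref{ThICs} yields the claim. The main care needed is bookkeeping: the paper writes $g(\bar x)$ rather than $-\bar q_2+{\rm D}g(\bar x)\bar x$, and these agree by (\ref{eq:qss}).
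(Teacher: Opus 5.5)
Your proposal is correct and follows the paper's proof essentially step for step. Like the paper, you specialize Theorem~\ref{ThICs}, use $DN_X(\bar x\,|\,0)(d_x)=\{0\}$ for $X=\mathbb R^n$, read off the graph derivative of $N_{K^\circ}$ from (\ref{eq:tgraphs}), and observe that the forced condition $[d_y]_\gamma=0$ makes the two implications equivalent. Your one addition is an explicit justification of the tangent-cone product formula, which the paper takes for granted; there ``polyhedral'' should read ``a finite union of polyhedral sets'', and equality holds because each factor ${\rm gph}\,N_{\mathbb R_+}$ locally coincides with its tangent cone and is therefore derivable.
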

  {\bf Proof}. We apply Theorem \ref{ThICs} to develop the isolated calmness property of $S^o_{{\rm Lag}}$ at $\bar p=(\bar w,0,0)$ for $(\bar x,\bar y)$.
In view of (\ref{eq:tgraphs}),  the inclusion ${\cal J}g(\bar x)d_x \in DN_{K^{\circ}}(\bar y\,|g(\bar x))(d_y)$ in (\ref{GNEP-ICs}) is equivalent to
  $$
 (d_y, {\cal J}g(\bar x)d_x) \in T_{N_{K^{\circ}}}(\bar y\,|g(\bar x)).
  $$
  Thus,  from (\ref{eq:tgraphs}), we have
\begin{equation}\label{N-Dsds}
(d_y, {\cal J}g(\bar x)d_x) \in T_{N_{K^{\circ}}}(\bar y\,|g(\bar x))\Longleftrightarrow \left\{
  \begin{array}{l}
  {\cal J}g_{\alpha}(\bar x)d_x=0\\[3pt]
  ([d_y]_{\beta},  {\cal J}g_{\beta}(\bar x)d_x)\in {\rm gph}\, N_{\mathbb R^{|\beta|}_+}\\[4pt]
  [d_y]_{\gamma}=0
  \end{array}
  \right.
\end{equation}
   Since $X=\mathbb R^n$, we have that (\ref{GNEP-ICs}) is reduced to
   \begin{equation}\label{nlpGNEP-ICss}
  \left.
  \begin{array}{l}
 - \textbf{G}d_x-{\cal J}g(\bar x)^Td_y =0\\[4pt]
  {\cal J}g(\bar x)d_x \in DN_{K^{\circ}}(\bar y\,|g(\bar x))(d_y)
  \end{array}
  \right\} \Longrightarrow d_x=0,d_y=0.
  \end{equation}
 We have from (\ref{N-Dsds}) that (\ref{nlpGNEP-ICss}) is equivalent to
 $$
  \left.
  \begin{array}{l}
  \textbf{G}d_x+{\cal J}g_{\alpha}g(\bar x)^T[dy]_{\alpha}+{\cal J}g_{\beta}(\bar x)^T[dy]_{\beta}=0\\[4pt]
{\cal J}g_{\alpha}(\bar x)d_x=0\\[3pt]
  ([d_y]_{\beta},  {\cal J}g_{\beta}(\bar x)d_x)\in {\rm gph}\, N_{\mathbb R^{|\beta|}_+}\\[4pt]
  \end{array}
  \right\} \Longrightarrow d_x=0,[d_y]_{\alpha\cup\beta}=0.
  $$
  Therefore,  (\ref{ThICs}) is equivalent  to the implication (\ref{ic-nlpths}).  Then it follows from
  Theorem \ref{ThICs} that this condition is equivalent
   to the isolated calmness property of $S^o_{{\rm Lag}}$ at $\bar p=(\bar w,0,0)$ for $(\bar x,\bar y)$.  The proof is completed. \hfill $\Box$\\
  \section{Classical Nash equilibrium problems}
  \setcounter{equation}{0}
  In this section, we apply the results in Subsection \ref{sub-1} to the classical Nash equilibrium problem (NEP).
  For NEP,  given the other players' strategies $x^{-\nu}$,  player $\nu$,
  solves the minimization problem
\begin{equation}\label{eq:p1nc}
\begin{array}{ll}
\min_{x_{\nu}}& \theta_{\nu}(x^{\nu},x^{-\nu})\\[4pt]
{\rm s.t.} & x_{\nu}\in C_{\nu},
\end{array}
\end{equation}
where  $C_{\nu}$ is specified as
\begin{equation}\label{eq:coneCnc}
C_{\nu}=\{x^{\nu}\in X_{\nu}:g_{\nu}(x^{\nu})\in K_{\nu}\},
\end{equation}
where $X_{\nu}\subset \mathbb R^{n_{\nu}}$ is a nonempty closed convex set, $K_{\nu}\subset {\cal Y}_{\nu}$ is closed convex cone and ${\cal Y}_{\nu}$ is a finitely dimensional Hilbert space. The Lagrangian function associated with (\ref{eq:p1nc}) with $C_{\nu}$  in
(\ref{eq:coneCnc})
is defined by
$$
    l_{\nu}  (x^{\nu}, x^{-\nu}, y^{\nu}): = \theta_{\nu} (x^{\nu},x^{-\nu})
    +\langle y^{\nu}, g_{\nu}(x^{\nu})\rangle
$$
The Nash equilibrium problem with canonically perturbations is expressed as
\begin{equation}\label{GNEP-pnc}
    \textbf{P}_{\nu} (x^{-{\nu}},w,v^{\nu},u^{\nu})\quad \quad\quad \quad
    \begin{array}{ll}
     \min\limits_{x^{\nu} \in X_{\nu}} & f_{\nu}  (x^{\nu}, x^{-{\nu}}; w)-\langle v^{\nu} , x^{\nu}\rangle\\[4pt]
   {\rm s.t.}& F_{\nu} (x^{\nu}; w) - u^{\nu}\in K_{\nu},
   \end{array}
   \end{equation}
where $w \in \mathbb R^d$, $v^{\nu} \in \mathbb{R}^{n_{\nu}}$ and $ u^{\nu}\in {\cal Y}_{\nu}$ are perturbation parameters. For simplicity, we denote
$p=(w;v^1,u^1;\ldots; v^{N},u^{N})\in \mathbb R^d \times \mathbb R^{n_1}\times {\cal Y}_1\times \cdots \times \mathbb R^{n_N}\times {\cal Y}_N$.
Let $\bar w \in \mathbb R^d$ be a fixed parameter such that
$$
f_{\nu} (x^{\nu}, x^{-{\nu}};\bar w)=\theta_{\nu}(x^{\nu}, x^{-{\nu}})\quad  \mbox{ and } \quad F_{\nu} (x^{\nu};\bar w)=g_{\nu}(x^{\nu}).
$$
and the Lagrangian function $l_{\nu}$ becomes
$$
    l_{\nu}  (x^{\nu}, x^{-\nu}, y^{\nu}) = f_{\nu} (x^{\nu};\bar w)
    +\langle y^{\nu}, F_{\nu}(x^{\nu};\bar w)\rangle.
$$
We denote the feasible set of the $\nu$-th player optimization problem (\ref{GNEP-pnc}) by $C_{\nu}(w,u^{\nu})$, namely
$$
C_{\nu}(w,u^{\nu})=\{x^{\nu}\in X_{\nu}: F_{\nu} (x^{\nu}; w) - u^{\nu}\in K_{\nu}\}.
$$
The Lagrangian function associated with (\ref{GNEP-pnc}) is defined by
$$
    L_{\nu}  (x^{\nu}, x^{-\nu}, y^{\nu} ;w,v^{\nu},u^{\nu}): = f_{\nu} (x^{\nu},x^{-\nu}; w)  -\langle v^{\nu} , x^{\nu}  \rangle
    +\langle y^{\nu}, F_{\nu}(x^{\nu}; w)-u^{\nu}\rangle
$$
for $\nu=1,...,N$. Under the basic constraint qualification for problem $\textbf{P}_{\nu} (\bar x^{-{\nu}},\bar w,0,0)$  that
$$
\left.
\begin{array}{r}
0\in N_{X_{\nu}}(\bar x^{\nu})+{\rm D}_{x^{\nu}}F_{\nu}(\bar x^{\nu},\bar w)^*y^k=0\\[4pt]
y^k \in N_{K_{\nu}}(F_{\nu}(\bar x^{\nu}\bar w))
\end{array}
\right
\} \Rightarrow y^k=0,
$$
similar to the discussions before (\ref{KKT-originalD}), we  obtain that the KKT conditions for problem $\textbf{P}_{\nu} (\bar x^{-{\nu}},\bar w,0,0)$ are
\begin{equation}\label{KKT-originalDnc}
\begin{array}{l}
0\in \nabla_{x^{\nu}}L_{\nu}(\bar x, y^{\nu};\bar w,0,0)+N_{X_{\nu}}(\bar x^{\nu}),\\[4pt]
0\in -\nabla_{y^{\nu}}L_{\nu}(\bar x, y^{\nu};\bar w,0,0)+N_{K_{\nu}^{\circ}}(y^{\nu}),
\end{array}
\end{equation}
where $K_{\nu}^{\circ}$ is the polar of $K_{\nu}$.

Then  the set of LGNEs  $S_{\rm Lag}: \mathbb{R}^d\times \mathbb R^{n}\times {\cal Y}\rightarrow \mathbb{R}^{n} \times {\cal Y}$,
   of NEP  whose $\nu$-player solves
   $\textbf{P}_{\nu} (x^{-{\nu}},w,v^{\nu},u^{\nu})$ for fixed  $p$ is expressed as
\begin{equation}\label{KKT-mappingnc}
S_{\rm Lag}(p)=\left
\{(x,y)\in \mathbb R^n \times {\cal Y}:\left(
\begin{array}{l}
0\in \nabla_{x^{\nu}}L_{\nu}(x, y^{\nu};w,v^{\nu},u^{\nu})+N_{X_{\nu}}(x^{\nu})\\[4pt]
0\in -\nabla_{y^{\nu}}L_{\nu}(x, y^{\nu};w,v^{\nu},u^{\nu})+N_{K_{\nu}^{\circ}}(y^{\nu})
\end{array} \right),\nu=1,\ldots,N
\right\}.
\end{equation}
Define $u=(u^1;\ldots;u^N)$, $y=(y^1;\ldots;y^N)$, $X=X_1 \times \cdots\times X_N$ and $K=K_1\times \cdots \times K_N$ and
$$
\textbf{L}(x,y;p)=\left(
\begin{array}{c}
\nabla_{x^1}L_{1}(x, y^{1};w,v^{1},u^{1})\\[2pt]
\cdots\\
\nabla_{x^{N}}L_{N}(x, y^{N};w,v^{N},u^{N})
\end{array}
\right), \textbf{F}(x;w)=\left(
\begin{array}{c}
F_{1}(x^1,;w)\\[2pt]
\cdots\\
F_{N}(x^N,;w)
\end{array}
\right).
$$
Then $S_{{\rm Lag}}(p)$ can be expressed as the following compact form
\begin{equation}\label{Lag-compactc}
S_{\rm Lag}(p)=\left
\{(x,y)\in \mathbb R^n \times {\cal Y}:0\in \left(
\begin{array}{c}
\textbf{L}(x,y;p)\\[4pt]
-\textbf{F}(x;w)+u
\end{array}
\right)+N_{X \times K^{\circ}}(x,y)\right\}.
\end{equation}
Let $\bar p=(\bar w,0,0)$ and $(\bar x,\bar y) \in S_{{\rm Lag}}(\bar p)$. Consider the following set-valued mapping
\begin{equation}\label{Lag-compactclasical}
L_{{\rm Lag}}(\delta)=\left
\{(x,y)\in \mathbb R^n \times {\cal Y}:\delta\in \left(
\begin{array}{c}
\bar q_1 +\textbf{G}x+{\cal A}^*y\\[4pt]
\bar q_2-{\cal A}x
\end{array}
\right)+N_{X \times K^{\circ}}(x,y)\right\}.
\end{equation}
where
\begin{equation}\label{eq:qsclassical}
\begin{array}{l}
\bar q_1=\textbf{l}(\bar x,\bar y)-\textbf{G}\bar x-{\cal A}^*\bar y,\\[4pt]
\bar q_2=-\textbf{g}(\bar x)+{\cal A}\bar x
\end{array}
\end{equation}
and
\begin{equation}\label{eq:dersclassical}
\begin{array}{l}
\textbf{G}=\left[
\begin{array}{cccc}
\nabla^2_{x^1x^1}l_1(\bar x,\bar y^1) & \nabla^2_{x^1x^2}l_1(\bar x,\bar y^1)& \cdots & \nabla^2_{x^1x^N}l_1(\bar x,\bar y^1)\\[4pt]
\nabla^2_{x^2x^1}l_2(\bar x,\bar y^2) & \nabla^2_{x^2x^2}l_2(\bar x,\bar y^2)& \cdots & \nabla^2_{x^2x^N}l_2(\bar x,\bar y^2)\\[4pt]
\vdots &\vdots & \vdots & \vdots\\[4pt]
\nabla^2_{x^Nx^1}l_N(\bar x,\bar y^N) & \nabla^2_{x^Nx^2}l_2(\bar x,\bar y^N)& \cdots & \nabla^2_{x^Nx^N}l_N(\bar x,\bar y^N)
\end{array}
\right],\\[28pt]
{\cal A}=\left(
\begin{array}{cccc}
{\rm D}_{x^1}g_1(\bar x^1) & 0 & \cdots &0\\[4pt]
0 & {\rm D}_{x^2}g_2(\bar x^2) & \cdots &0\\[4pt]
\vdots & \vdots & \vdots & \vdots \\[4pt]
0 & 0) & \cdots &{\rm D}_{x^N}g_N(\bar x^N) \\[4pt]
\end{array}
\right).
\end{array}
\end{equation}
Like Theorem \ref{ThAubin}, we can prove the following result.
\begin{theorem}\label{ThAubinclassical}
  Suppose $(\bar x,\bar y)\in S_{{\rm Lag}}(\bar p)$. Let $f_{\nu}: \mathbb R^n \times \mathbb R^d\rightarrow \mathbb R$ and $F_{\nu}: \mathbb R^{n_{\nu}} \times \mathbb R^d\rightarrow {\cal Y}_{\nu}$ be  twice differentiable with respect to $x$
  in a neighborhood of $(\bar x,\bar w)$. Suppose that $f_{\nu}, F_{\nu}$,  $\nabla_x f_{\nu}$ and  ${\rm D}_xF_{\nu}$  are strictly differentiable with respect to $(x,w)$. Then
  $S_{{\rm Lag}}$ has Aubin property at $\bar p=(\bar w,0,0)$ for $(\bar x,\bar y)$ if and only if
  \begin{equation}\label{GNEP-Aubinclassical}
  \left.
  \begin{array}{rl}
  \textbf{G}^*w_1-\left(
  \begin{array}{c}
  {\rm D}_{x^1}g_1(\bar x^1)^*[w_2]_1\\
  \vdots\\
  {\rm D}_{x^N}g_N(\bar x^N)^*[w_2]_N
  \end{array}
  \right)
   & \in \left(
   \begin{array}{c}
   D^*N_{X_1}(\bar x^1\,|-\nabla_{x^1}l_1(\bar x,\bar y^1)(-[w_1]_1)\\
   \vdots\\
    D^*N_{X_N}(\bar x^N\,|-\nabla_{x^N}l_N(\bar x,\bar y^N)(-[w_1]_N)
    \end{array}
    \right)
   \\[4pt]
 \left(
 \begin{array}{c}
 {\rm D}_{x^1}g_1(\bar x^1)[w_1]_1\\
 \vdots\\
   {\rm D}_{x^N}g_N(\bar x^N) [w_1]_N
   \end{array}
   \right) & \in
   \left(
   \begin{array}{c}
   D^*N_{K_1^{\circ}}(\bar y^1\,|g_1(\bar x^1))(-[w_2]_1)\\
   \vdots\\
   D^*N_{K_N^{\circ}}(\bar y^N\,|g_N(\bar x^N))(-[w_2]_N)
    \end{array}
    \right)
    \end{array}
  \right \} \Longrightarrow w_1=0,w_2=0.
  \end{equation}
  \end{theorem}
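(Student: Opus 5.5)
The plan is to treat the classical NEP as the special case of the GNEP framework of Subsection \ref{sub-1} in which each $F_\nu$ depends only on $(x^\nu,w)$, and then to specialize Theorem \ref{ThAubin}. The compact form (\ref{Lag-compactc}) of $S_{\rm Lag}$ has exactly the structure of (\ref{Lag-compact}). The hypotheses of Proposition \ref{Aubin-Calm} are the same as those assumed here. Hence $S_{\rm Lag}$ has the Aubin property at $\bar p$ for $(\bar x,\bar y)$ if and only if the linearized map $L_{\rm Lag}$ in (\ref{Lag-compactclasical}) has it at $0$.

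Next I would identify the operators of (\ref{eq:ders}) in this setting. Since $g_\nu$ depends only on $x^\nu$, we have ${\rm D}_{x^\mu}g_\nu(\bar x)=0$ for $\mu\ne\nu$. So the Jacobian operator ${\cal A}$ of (\ref{eq:ders}) becomes the block-diagonal operator in (\ref{eq:dersclassical}), and ${\cal B}={\cal A}^*$. Then $L_{\rm Lag}$ takes the form (\ref{Lcomp}) with
\[
\textbf{M}=\left[\begin{array}{ll}\textbf{G} & {\cal A}^*\\ -{\cal A} & 0\end{array}\right].
\]
Proposition \ref{gders} then gives the criterion (\ref{eq:c1}). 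The adjoint is
\[
\textbf{M}^*=\left[\begin{array}{ll}\textbf{G}^* & -{\cal A}^*\\ {\cal A} & 0\end{array}\right],
\]
so for $w=(w_1,w_2)$ the first component is $\textbf{G}^*w_1-{\cal A}^*w_2$ and the second is ${\cal A}w_1$. This is exactly Theorem \ref{ThAubin} with ${\cal B}^*={\cal A}$.

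The remaining step is to split the coderivatives blockwise. I would evaluate the base points first. From (\ref{eq:qsclassical}), $-\bar q_1-\textbf{G}\bar x-{\cal A}^*\bar y=-\textbf{l}(\bar x,\bar y)$, whose $\nu$-th block is $-\nabla_{x^\nu}l_\nu(\bar x,\bar y^\nu)$. Also $-\bar q_2+{\cal A}\bar x=\textbf{g}(\bar x)$, with blocks $g_\nu(\bar x^\nu)$.

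Since $X=X_1\times\cdots\times X_N$ and $K^\circ=K_1^\circ\times\cdots\times K_N^\circ$, we have ${\rm gph}\,N_X=\prod_\nu {\rm gph}\,N_{X_\nu}$, up to reordering coordinates, and similarly for $K^\circ$. For products of closed sets the limiting normal cone splits: $N_{C_1\times C_2}=N_{C_1}\times N_{C_2}$ (\cite[Proposition 6.41]{RW98}). Hence
\[
D^*N_X(\bar x\,|\,\bar v)(-w_1)=\prod_\nu D^*N_{X_\nu}(\bar x^\nu\,|\,\bar v^\nu)(-[w_1]_\nu),
\]
and the same holds for $N_{K^\circ}$. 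Writing ${\cal A}^*w_2$ and ${\cal A}w_1$ blockwise as ${\rm D}_{x^\nu}g_\nu(\bar x^\nu)^*[w_2]_\nu$ and ${\rm D}_{x^\nu}g_\nu(\bar x^\nu)[w_1]_\nu$ then turns (\ref{eq:c1}) into (\ref{GNEP-Aubinclassical}).

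I expect the only delicate point to be this product rule for coderivatives, namely checking that the coordinate permutation between $\prod_\nu({\cal X}_\nu\times{\cal X}_\nu)$ and $(\prod_\nu{\cal X}_\nu)^2$ is an isometry that carries limiting normals to limiting normals. This is standard, since a linear isometry maps regular normals, and hence their limits, to the corresponding normals. Everything else is direct substitution.
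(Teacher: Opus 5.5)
Your proposal is correct and follows the route the paper intends; the paper gives no separate argument and only says the result is proved ``like Theorem \ref{ThAubin}''. You reduce to the linearized map via Proposition \ref{Aubin-Calm}, apply the coderivative criterion of Proposition \ref{gders} with ${\cal A}$ block-diagonal and ${\cal B}={\cal A}^*$, and split the coderivatives of $N_X$ and $N_{K^\circ}$ over the product structure, which fills in the blockwise step via \cite[Proposition 6.41]{RW98} that the paper leaves implicit.
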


\begin{corollary}\label{corSRegularityclassical}

Suppose conditions in Theorem \ref{ThAubin} are satisfied and $\Theta=X \times K^{\circ}$ is polyhedral, then $S_{{\rm Lag}}$ has a Lipschitz continuous single-valued localization at $\bar p=(\bar w,0,0)$ for $(\bar x,\bar y)$ if and only if the implication (\ref{GNEP-Aubinclassical}) holds.
\end{corollary}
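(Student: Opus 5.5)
The argument will copy the proof of Corollary \ref{corSRegularity}, now for the classical NEP mapping (\ref{Lag-compactclasical}). The linearized mapping is $L_{{\rm Lag}}(\delta)=\{z:\delta\in\bar q+\textbf{M}z+N_\Theta(z)\}$ with
$$
\textbf{M}=\left[\begin{array}{ll}\textbf{G} & {\cal A}^*\\ -{\cal A} & 0\end{array}\right]
$$
and $\Theta=X\times K^{\circ}$. Here ${\cal A}$ is block diagonal, as in (\ref{eq:dersclassical}), and ${\cal B}={\cal A}^*$. So $L_{{\rm Lag}}$ is a special case of the general construction of Subsection \ref{sub-1}, with $g_\nu$ depending only on $x^\nu$. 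The plan is to prove three equivalences in turn:
\begin{itemize}
\item[(1)] Aubin property of $S_{{\rm Lag}}$ at $\bar p$ for $(\bar x,\bar y)$ holds if and only if the implication (\ref{GNEP-Aubinclassical}) holds. This is Theorem \ref{ThAubinclassical}.
\item[(2)] Aubin property of $L_{{\rm Lag}}$ at $0$ is equivalent to its having a Lipschitz continuous single-valued localization, when $\Theta$ is polyhedral.
\item[(3)] The Lipschitz continuous single-valued localization transfers between $L_{{\rm Lag}}$ and $S_{{\rm Lag}}$.
\end{itemize}

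For step (1), I first check that the hypotheses of Proposition \ref{Aubin-Calm} hold. The partial derivative ${\rm D}_pf(\bar p,\bar x,\bar y)$ has the identity blocks $-I_n$ and ${\cal I}_{\cal Y}$ in the $(v,u)$ coordinates, so it is onto, exactly as before. Proposition \ref{Aubin-Calm}(a) then shows that the Aubin property of $S_{{\rm Lag}}$ is equivalent to that of $L_{{\rm Lag}}$. Proposition \ref{gders} characterizes the latter by the coderivative condition (\ref{eq:c1}). Splitting $w=(w_1,w_2)$ componentwise by player, and using that $\Theta$ is a product so $D^*N_\Theta$ splits, turns (\ref{eq:c1}) into (\ref{GNEP-Aubinclassical}).

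For step (2), I invoke \cite[Theorem 1]{DRockafellar96}. For an affine map plus the normal cone of a polyhedral convex set $\Theta$, this result gives exactly the equivalence between the Aubin property and a Lipschitz continuous single-valued localization of $L_{{\rm Lag}}$ at $0$ for $\bar z$.

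Step (3) is the transfer. The implication from $L_{{\rm Lag}}$ to $S_{{\rm Lag}}$ follows from Proposition \ref{single-valued-lip}, or equivalently from \cite[Proposition 2]{DRockafellar96} or \cite[Theorem 3G.4]{DRockafellar2009}. These apply because $h$ is a strict first-order approximation of $f$ in $(x,y)$, uniformly in $p$; this comes from Lemma \ref{first-app} together with strict differentiability.

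The converse of step (3) is the step I expect to be the main obstacle. Suppose $S_{{\rm Lag}}$ has a Lipschitz continuous single-valued localization. Then it has the Aubin property, and by step (1) implication (\ref{GNEP-Aubinclassical}) holds. Hence, going back through Proposition \ref{Aubin-Calm}(a), $L_{{\rm Lag}}$ has the Aubin property, and by step (2) it has a Lipschitz continuous single-valued localization. This closes the chain of equivalences.

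The only delicate point is this converse direction. It relies on the surjectivity of ${\rm D}_pf$, which is what Proposition \ref{Aubin-Calm} needs. It also relies on polyhedrality, which excludes the situation where the Aubin property holds without single-valuedness. Both facts come directly from the structure of the canonical perturbations, so no new computation is needed.
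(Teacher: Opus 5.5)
Your proof is correct and is essentially the paper's argument, namely the proof of Corollary~\ref{corSRegularity} specialized to the block-diagonal ${\cal A}$ with ${\cal B}={\cal A}^*$: \cite[Theorem 1]{DRockafellar96} gives the equivalence of the Aubin property and a Lipschitz continuous single-valued localization for $L_{\rm Lag}$, this localization is transferred to $S_{\rm Lag}$, and the Aubin property is characterized by (\ref{GNEP-Aubinclassical}) via the coderivative criterion. The only variation is in the ``only if'' direction: instead of the two-sided transfer of \cite[Proposition 2]{DRockafellar96}, you note that a Lipschitz continuous single-valued localization of $S_{\rm Lag}$ gives its Aubin property and then apply Theorem~\ref{ThAubinclassical}, which is valid and already finishes that direction, so your subsequent return to $L_{\rm Lag}$ is not needed.
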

Like Theorem \ref{ThIC}, we can prove the following result.
\begin{theorem}\label{ThICclassical}
  Suppose $(\bar x,\bar y)\in S_{{\rm Lag}}(\bar p)$. Let $f_{\nu}: \mathbb R^n \times \mathbb R^d\rightarrow \mathbb R$ and $F_{\nu}: \mathbb R^n \times \mathbb R^d\rightarrow {\cal Y}_{\nu}$ be  twice differentiable with respect to $x$
  in a neighborhood of $(\bar x,\bar w)$. Suppose that $f_{\nu}, F_{\nu}$,  $\nabla_x f_{\nu}$ and  ${\rm D}_xF_{\nu}$  are strictly differentiable with respect to $(x,w)$. Then
  $S_{{\rm Lag}}$ has the isolated calmness property at $\bar p=(\bar w,0,0)$ for $(\bar x,\bar y)$ if and only if
   \begin{equation}\label{GNEP-ICclassical}
  \left.
  \begin{array}{rl}
 - \textbf{G}d_x-\left(
  \begin{array}{c}
  {\rm D}_{x^1}g_1(\bar x^1)^*[d_y]_1\\
  \vdots\\
 {\rm D}_{x^N}g_N(\bar x^N)^*[d_y]_N
  \end{array}
  \right)
   & \in \left(
   \begin{array}{c}
   DN_{X_1}(\bar x^1\,|-\nabla_{x^1}l_1(\bar x,\bar y^1)([d_x]_1)\\
   \vdots\\
    DN_{X_N}(\bar x^N\,|-\nabla_{x^N}l_N(\bar x,\bar y^N)([d_x]_N)
    \end{array}
    \right)
   \\[8pt]
 \left(
 \begin{array}{c}
 {\rm D}_{x^1}g_1(\bar x^1)[d_x]_1\\
 \vdots\\
   {\rm D}_{x^N}g_N(\bar x^N) [d_x]_N
   \end{array}
   \right) & \in
   \left(
   \begin{array}{c}
   DN_{K_1^{\circ}}(\bar y^1\,|g_1(\bar x^1))([d_y]_1)\\
   \vdots\\
   DN_{K_N^{\circ}}(\bar y^N\,|g_N(\bar x^N))([d_y]_N)
    \end{array}
    \right)
    \end{array}
  \right \} \Longrightarrow d_x=0,d_y=0.
  \end{equation}

  \end{theorem}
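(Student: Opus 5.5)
The plan is to obtain Theorem \ref{ThICclassical} as a specialization of Theorem \ref{ThIC}, whose hypotheses match the present ones. First, the NEP data fit the general framework of Subsection \ref{sub-1}. Take the general constraint maps to be $F_\nu(x;w):=F_\nu(x^\nu;w)$, which do not depend on $x^{-\nu}$, and set $g_\nu(x):=g_\nu(x^\nu)$. Then the LGNE mapping (\ref{KKT-mappingnc}) is exactly (\ref{KKT-mapping}), and the compact forms (\ref{Lag-compactc}) and (\ref{Lag-compact}) agree. Since $f_\nu$ and $F_\nu$ are unchanged, Proposition \ref{Aubin-Calm}(b) and Proposition \ref{gders1} apply directly. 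Hence $S_{\rm Lag}$ has the isolated calmness property at $\bar p$ for $(\bar x,\bar y)$ if and only if implication (\ref{GNEP-IC}) holds.

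Second, I will compute the operators appearing in (\ref{GNEP-IC}). Because ${\rm D}_{x^\mu}g_\nu=0$ for $\mu\neq\nu$, the matrix ${\cal A}$ in (\ref{eq:ders}) becomes the block diagonal operator of (\ref{eq:dersclassical}). Its adjoint is ${\cal A}^*={\rm diag}({\rm D}_{x^\nu}g_\nu(\bar x^\nu)^*)$, and this is exactly ${\cal B}$ in (\ref{eq:ders}). So ${\cal B}d_y$ is the stacked vector $({\rm D}_{x^\nu}g_\nu(\bar x^\nu)^*[d_y]_\nu)_\nu$, and ${\cal A}d_x=({\rm D}_{x^\nu}g_\nu(\bar x^\nu)[d_x]_\nu)_\nu$.

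Third, I will identify the base points of the graph derivatives. By (\ref{eq:qsclassical}), $-\bar q_1-\textbf{G}\bar x-{\cal A}^*\bar y=-\textbf{l}(\bar x,\bar y)$, whose $\nu$-th block is $-\nabla_{x^\nu}l_\nu(\bar x,\bar y^\nu)$. Similarly, $-\bar q_2+{\cal A}\bar x=\textbf{g}(\bar x)$, with blocks $g_\nu(\bar x^\nu)$.

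The remaining step is to split the graph derivatives of $N_X$ and $N_{K^\circ}$ along the product structure $X=\prod X_\nu$ and $K^\circ=\prod K_\nu^\circ$. For a product of closed convex sets, ${\rm gph}\,N_{\prod C_\nu}$ is, after reordering coordinates, $\prod{\rm gph}\,N_{C_\nu}$. This gives the inclusion $T_{\prod {\rm gph} N_{C_\nu}}\subseteq\prod T_{{\rm gph}N_{C_\nu}}$. This step is the only delicate point, because the reverse inclusion for tangent (contingent) cones of products can fail in general. I would handle it as follows:
\begin{itemize}
\item when each factor is polyhedral, or more generally geometrically derivable, equality holds, and this is the setting in which the formula is used;
\item otherwise, one can apply Proposition \ref{gders1} to the linearized mapping directly.
\end{itemize}
In the second case the same coordinate splitting appears, and one justifies, or if necessary assumes, the componentwise form $DN_{X}(\bar x\,|\bar v)(d_x)=\prod_\nu DN_{X_\nu}(\bar x^\nu\,|\bar v^\nu)([d_x]_\nu)$, and likewise for $K^\circ$. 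Substituting these block formulas into (\ref{GNEP-IC}) yields (\ref{GNEP-ICclassical}) line by line, which completes the proof.
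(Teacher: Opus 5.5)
Your route is the paper's. The paper proves Theorem \ref{ThICclassical} only by saying it goes ``like Theorem \ref{ThIC}'', which is the specialization you carry out: block-diagonal ${\cal A}$, ${\cal B}={\cal A}^*$, base points $-\textbf{l}(\bar x,\bar y)$ and $\textbf{g}(\bar x)$, then splitting the graph derivatives player by player. The delicate point you flag is real, but your second bullet does not resolve it. Applying Proposition \ref{gders1} to the linearized mapping just returns you to the joint cone $T_{{\rm gph}\,N_\Theta}$, which is exactly the object that has to be split, and ``assuming'' the componentwise formula is not a proof.

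What your argument does give unconditionally is the ``if'' half. After permuting coordinates, ${\rm gph}\,N_\Theta$ is $\prod_\nu{\rm gph}\,N_{X_\nu}\times\prod_\nu{\rm gph}\,N_{K_\nu^\circ}$, so its contingent cone lies in the product of the factors' contingent cones. Hence every $d_z$ satisfying the inclusion in (\ref{eq:c2}) also satisfies the hypotheses of (\ref{GNEP-ICclassical}). So (\ref{GNEP-ICclassical}) implies (\ref{eq:c2}), and thus isolated calmness.

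The ``only if'' half needs equality in the product rule. Equality holds when the factor graphs are geometrically derivable at the base points. Equivalently, $\Pi_{X_\nu}$ must be directionally differentiable at $\bar x^\nu-\nabla_{x^\nu}l_\nu(\bar x,\bar y^\nu)$, and $\Pi_{K_\nu^\circ}$ at $\bar y^\nu+g_\nu(\bar x^\nu)$. This covers polyhedral sets and second-order and semidefinite cones. Metric projections onto general closed convex sets need not be directionally differentiable, so for arbitrary $X_\nu,K_\nu$ this half is not justified by your argument.

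The paper carries the same tacit assumption: Theorem \ref{ThIC} already splits $DN_{X\times K^\circ}$ into $DN_X$ and $DN_{K^\circ}$ without comment. So your proof is as complete as the paper's. Replace the second bullet with an explicit derivability hypothesis, or restrict the ``only if'' direction to that case.
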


 In the remaining part of this section, we consider the case when $C_{\nu}$ is defined by both equality and inequality constraints:
    \begin{equation}\label{eq:ineqcnn}
C_{\nu}=\{x^{\nu}\in \mathbb R^{n_{\nu}}:g_{\nu}(x^{\nu})\in K_{\nu}\}\mbox{ with } K_{\nu}=\{0_{q_{\nu}}\} \times \mathbb R^{m_{\nu}-q_{\nu}}_-,
\end{equation}
  where $g_{\nu}: \mathbb R^{n_{\nu}} \rightarrow \mathbb R^{m_{\nu}}$, $\nu=1,\ldots,N$. We define $m=m_1+\ldots+m_N$.

  In this case $S_{{\rm Lag}}(p)$ is be expressed as the following form
\begin{equation}\label{Lag-eqIeqCnn}
S_{\rm Lag}(p)=\left
\{(x,y)\in \mathbb R^n \times \mathbb R^q:
\begin{array}{l}
0=\textbf{L}(x,y;p)\\[4pt]
0= -F_{1,a}(x^1;w)+u^{1,a}\\[4pt]
0\in -F_{1,b}(x^1;w)+u^{1,b}+N_{\mathbb R^{m_1-q_1}_+}(y^{1,b})\\[4pt]
\quad \quad \quad \vdots\\[4pt]
0=-F_{N,a}(x^N;w)+u^{N,a}\\[4pt]
0\in -F_{N,b}(x^N;w)+u^{N,b}+N_{\mathbb R^{m_N-q_N}_+}(y^{N,b})\\[4pt]
\end{array}
\right\},
\end{equation}
where
$$
\textbf{L}(x,y;p)=\left(
\begin{array}{c}
\nabla_{x^1}f_{1}(x;w)+{\cal J}_{x^1}F_1(x^1;w)^Ty^{1}-v^{1}\\[2pt]
\cdots\\
\nabla_{x^N}f_{N}(x;w)+{\cal J}_{x^N}F_N(x^N;w)^Ty^{N}-v^{N}
\end{array}
\right),\,\, F_{\nu}=\left(
\begin{array}{c}
F_{\nu,a}\\[3pt]
F_{\nu,b}
\end{array}
\right), \,\,  u^{\nu}=\left(
\begin{array}{c}
u^{\nu,a}\\[3pt]
u^{\nu,b}
\end{array}
\right),\,\, y^{\nu}=\left(
\begin{array}{c}
y^{\nu,a}\\[3pt]
y^{\nu,b}
\end{array}
\right)
$$
 with $F_{\nu,a}:\mathbb R^{n_{\nu}} \times \mathbb R^d \rightarrow \mathbb R^{q_{\nu}}$, $F_{\nu,b}:\mathbb R^{n_{\nu}} \times \mathbb R^d \rightarrow \mathbb R^{m_{\nu}-q_{\nu}}$, and $u^{\nu,a},y^{\nu,a}\in R^{q_{\nu}}$ and $u^{\nu,b}, y^{\nu,b}\in R^{m_{\nu}-q_{\nu}}$.

For  $(\bar x,\bar y)\in S_{{\rm Lag}}(\bar p)$, define three sets of indices:
 \begin{equation}\label{3indc}
 \begin{array}{l}
 \alpha_{\nu}=\{1,\ldots, q_{\nu}\} \cup\{i: [g_{\nu}]_i(\bar x^{\nu})=0< \bar y^{\nu}_i, i=q_{\nu}+1,\ldots,m_{\nu}\},\\[4pt]
 \beta_{\nu}=\{i: [g_{\nu}]_i(\bar x^{\nu})=0=\bar y^{\nu}_i, i=q_{\nu}+1,\ldots,m_{\nu}\},\\[4pt]
 \gamma_{\nu}=\{i: [g_{\nu}]_i(\bar x^{\nu})<0=\bar y^{\nu}_i, i=q_{\nu}+1,\ldots,m_{\nu}\}.
\end{array}
 \end{equation}
If for any $d \in \mathbb R^n$  satisfying
$$
{\cal J}_{x^{\nu}}g_{\nu,\alpha_{\nu}}(\bar x^{\nu})d^{\nu}=0, {\cal J}_{x^{\nu}}g_{\nu,\beta_{\nu}}(\bar x^{\nu})d^{\nu}\leq0,
d^{\nu}\ne 0, \nu=1,\ldots, N,
$$
one has that
$$
\langle d^{\nu}, \nabla^2_{x^{\nu}x^{\nu}}l_{\nu}(\bar x,\bar y^{\nu})d^{\nu}\rangle >0, \nu=1,\ldots, N,
$$
then $\bar x$ be a local Nash equilibrium point for the GNEP problem.

Using Theorem \ref{ThAubinclassical} and \cite[Theroem 1,Proposition 2]{DRockafellar96}, we can easily obtain the following result.
 \begin{theorem}\label{svalued-lip-nlpclassical}
  Suppose $(\bar x,\bar y)\in S_{{\rm Lag}}(\bar p)$. Let $f_{\nu}: \mathbb R^n \times \mathbb R^d\rightarrow \mathbb R$ and $F_{\nu}: \mathbb R^n \times \mathbb R^d\rightarrow \mathbb R^{m_{\nu}}$ be  twice differentiable with respect to $x$
  in a neighborhood of $(\bar x,\bar w)$. Suppose that $f_{\nu}$  and  $\nabla_x f_{\nu}$ are strictly differentiable with respect to $(x,w)$ and
  $F_{\nu}$ and  ${\cal J}_{x^{\nu}}F_{\nu}$  are strictly differentiable with respect to $(x^{\nu},w)$ for $\nu=1,\ldots,N$. Then $S_{{\rm Lag}}$ has a Lipschitz continuous single-valued localization at $\bar p=(\bar w,0,0)$ for $(\bar x,\bar y)$ if and only if, for any $\nu=1,\ldots, N$, for any partition
  $\beta_{\nu}=\beta_{\nu}^+\cup \beta_{\nu}^0\cup \beta_{\nu}^-$ and $\alpha_{\nu}^+=\alpha_{\nu}\cup \beta_{\nu}^+$,
                   \begin{equation}\label{ec-matrixc}
                   \left.
                 \begin{array}{r}
                 \textbf{G}^Tw_1=\left(
  \begin{array}{r}
  {\cal J}g_{1,\alpha_{1}^+}(\bar x^1)^T[w_2]_{\alpha_{1}^+}+{\cal J}g_{1,\beta^0_{1}}(\bar x^1)^T[w_2]_{\beta_{1}^0}\\
  \vdots\\
  {\cal J}g_{N,\alpha_{N}^+}(\bar x^N)^T[w_2]_{\alpha_{N}^+}+{\cal J}g_{N,\beta^0_{N}}(\bar x^1)^T[w_2]_{\beta_{N}^0}
  \end{array}
  \right)\\[6pt]
                 {\cal J}_{x^{\nu}}g_{\nu,\alpha_{\nu}^+}(\bar x^{\nu})[w_1]_{\nu}=0,\, {\cal J}_{x^{\nu}}g_{\nu,\beta_{\nu}^0}(\bar x^{\nu})[w_1]_{\nu}\leq 0,
                 [w_2]_{\beta_{\nu}^0}\geq0\\[6pt]
                 \nu=1,\ldots N
                 \end{array}
                 \right
                 \} \Longrightarrow\begin{array}{r}
                  w_1=0, (w_2)_{\alpha_{\nu}^+\cup\beta_{\nu}^0}=0,\\
                  \nu=1,\ldots,N,
                  \end{array}
                 \end{equation}
                  where $\textbf{G}$ is defined by (\ref{eq:dersclassical}).
  \end{theorem}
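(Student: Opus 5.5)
The plan mirrors the proof of Theorem \ref{svalued-lip-nlp}, now using Theorem \ref{ThAubinclassical} and Corollary \ref{corSRegularityclassical} in place of Theorem \ref{ThAubin} and Corollary \ref{corSRegularity}.

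\emph{Step 1 (reduction to polyhedral Aubin).} Here $X=\mathbb R^n$ and $K^{\circ}=\prod_{\nu}(\mathbb R^{q_{\nu}}\times\mathbb R^{m_{\nu}-q_{\nu}}_+)$, so $\Theta=X\times K^{\circ}$ is polyhedral. By Corollary \ref{corSRegularityclassical}, i.e.\ \cite[Theorem 1, Proposition 2]{DRockafellar96} combined with Proposition \ref{Aubin-Calm}, the Lipschitz continuous single-valued localization of $S_{{\rm Lag}}$ at $\bar p$ for $(\bar x,\bar y)$ is equivalent to the implication (\ref{GNEP-Aubinclassical}). The assumption that $F_{\nu}$ and ${\cal J}_{x^{\nu}}F_{\nu}$ are strictly differentiable in $(x^{\nu},w)$ is exactly what is needed there, since $F_{\nu}$ depends only on $x^{\nu}$. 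The remaining task is to rewrite (\ref{GNEP-Aubinclassical}) explicitly.

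\emph{Step 2 (the $X$-row).} Since $X_{\nu}=\mathbb R^{n_{\nu}}$, we have ${\rm gph}\,N_{X_{\nu}}=\mathbb R^{n_{\nu}}\times\{0\}$. Hence $D^*N_{X_{\nu}}(\bar x^{\nu}\,|\,0)(\cdot)=\{0\}$, exactly as in the proof of the equality-constrained proposition. The first row of (\ref{GNEP-Aubinclassical}) therefore becomes
\[
\textbf{G}^Tw_1=\bigl({\cal J}g_{\nu}(\bar x^{\nu})^T[w_2]_{\nu}\bigr)_{\nu=1}^N ,
\]
a block-diagonal right-hand side in which the $\nu$-th block involves only $[w_2]_{\nu}$. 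This is the structural difference from the GNEP case, where the sum ran over all $\nu$.

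\emph{Step 3 (the $K^{\circ}$-row).} Because $N_{K^{\circ}}$ is a product, its coderivative splits player by player. Using formula (\ref{eq:ngraph}) for each $\nu$, which is unchanged except that $g_{\nu}$ depends only on $x^{\nu}$, the second row of (\ref{GNEP-Aubinclassical}) is equivalent to three conditions:
\begin{itemize}
\item[(a)] ${\cal J}g_{\nu,\alpha_{\nu}}(\bar x^{\nu})[w_1]_{\nu}=0$;
\item[(b)] $[w_2]_{\gamma_{\nu}}=0$;
\item[(c)] for each $i\in\beta_{\nu}$, the pair $(\nabla [g_{\nu}]_i(\bar x^{\nu})^T[w_1]_{\nu},\,[w_2]_i)$ lies in $(\mathbb R_-\times\mathbb R_+)\cup(\{0\}\times\mathbb R)\cup(\mathbb R\times\{0\})$.
\end{itemize}
Up to the sign convention on $-[w_2]_{\nu}$, which is handled as in (\ref{N-Ds}), I expect no difficulty here.

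\emph{Step 4 (unfolding the union).} This is the main bookkeeping step. The condition in (c) is a union of three branches for each index $i\in\beta_{\nu}$. Let $\beta_{\nu}^+$ collect the indices on the branch $\{0\}\times\mathbb R$, where the constraint derivative vanishes and the multiplier is free. Let $\beta_{\nu}^-$ collect those on the branch $\mathbb R\times\{0\}$, where the multiplier vanishes. Let $\beta_{\nu}^0$ collect those on $\mathbb R_-\times\mathbb R_+$.

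An implication whose hypothesis is a finite union of sets holds if and only if it holds on each piece. Hence (\ref{GNEP-Aubinclassical}) is equivalent to the family of implications (\ref{ec-matrixc}) over all partitions, with $\alpha_{\nu}^+=\alpha_{\nu}\cup\beta_{\nu}^+$. On each piece $[w_2]_{\beta_{\nu}^-\cup\gamma_{\nu}}=0$, so the conclusion $w_2=0$ reduces to $[w_2]_{\alpha_{\nu}^+\cup\beta_{\nu}^0}=0$.

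The one point to check carefully is that the partitions of different players can be chosen independently. This holds because the sets are products over $\nu$. Consequently ``for any $\nu$ and any partition of $\beta_{\nu}$'' should be read as any simultaneous choice of partitions for all players, with the implication imposed jointly over $\nu=1,\ldots,N$, as (\ref{ec-matrixc}) is written.
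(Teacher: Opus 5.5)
Your proposal is correct and is essentially the paper's argument. The paper gives no separate proof: it derives this theorem from Theorem \ref{ThAubinclassical} together with the Dontchev--Rockafellar equivalence of the Aubin property and a Lipschitz single-valued localization for polyhedral $\Theta$, and the unfolding of the coderivative of $N_{K^{\circ}}$ is the one in the proof of Theorem \ref{svalued-lip-nlp}, now with the block-diagonal ${\cal A}$ (so ${\cal B}={\cal A}^*$) that you point out. One wording fix in Step 4: the three branches overlap (e.g.\ at $(0,0)$), so you should assign each index $i\in\beta_\nu$ to \emph{some} branch containing its pair, which is all the union argument needs.
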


  For every $\nu=1,\ldots, N$, when the strict complementarity condition holds for $\nu$-th player's problem at $(\bar x^{\nu},\bar y^{\nu})$, namely $\beta_{\nu}=\emptyset$, we have the following corollary.
  \begin{corollary}\label{cor:scclassical}
 Suppose $(\bar x,\bar y)\in S_{{\rm Lag}}(\bar p)$. Let $f_{\nu}: \mathbb R^n \times \mathbb R^d\rightarrow \mathbb R$ and $F_{\nu}: \mathbb R^{n_{\nu}} \times \mathbb R^d\rightarrow \mathbb R^{m_{\nu}}$ be  twice differentiable with respect to $x$
  in a neighborhood of $(\bar x,\bar w)$,
  $F_{\nu}$ and  ${\cal J}_{x^{\nu}}F_{\nu}$  are strictly differentiable with respect to $(x^{\nu},w)$ for $\nu=1,\ldots,N$. Assume that $\beta_{\nu}=\emptyset$ for every $\nu=1,\ldots, N$. Then $S_{{\rm Lag}}$ has a Lipschitz continuous single-valued localization at $\bar p=(\bar w,0,0)$ for $(\bar x,\bar y)$ if and only if
  the matrix
                 \begin{equation}\label{ec-matrixScclassical}
                 \left[
                 \begin{array}{ll}
                 \textbf{G}^T & -{\cal A}_1^T\\[4pt]
                 {\cal B}_1^T & 0
                 \end{array}
                 \right
                 ]
                 \end{equation}
                 is nonsingular, where $\textbf{G}$ is defined by (\ref{eq:dersclassical}), ${\cal A}_1$ is defined by
                                  \begin{equation}\label{eq:notationsNccsclassical}
\begin{array}{l}
{\cal A}_1=\left(
\begin{array}{cccc}
{\cal J}_{x^1}g_{1,\alpha_1}(\bar x^1) & 0 & \cdots & 0\\[4pt]
0 &{\cal J}_{x^2}g_{2,\alpha_2}(\bar x^2)& \cdots & 0 \\[4pt]
\vdots & \vdots & \vdots & \vdots \\[4pt]
0& 0 & \cdots & {\cal J}_{x^N}g_{N,\alpha_N}(\bar x^N) \\[4pt]
\end{array}
\right).
\end{array}
\end{equation}
  \end{corollary}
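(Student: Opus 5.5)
The plan is to derive Corollary \ref{cor:scclassical} from Theorem \ref{svalued-lip-nlpclassical} by specializing to $\beta_{\nu}=\emptyset$ for every $\nu$. With all $\beta_{\nu}$ empty, the only admissible partition is $\beta_{\nu}^+=\beta_{\nu}^0=\beta_{\nu}^-=\emptyset$, so $\alpha_{\nu}^+=\alpha_{\nu}$. The family of implications (\ref{ec-matrixc}) then collapses to a single implication:
$$
\left.\begin{array}{l}
\textbf{G}^Tw_1={\cal A}_1^T[w_2]_{\alpha},\\[3pt]
{\cal J}_{x^{\nu}}g_{\nu,\alpha_{\nu}}(\bar x^{\nu})[w_1]_{\nu}=0,\ \nu=1,\ldots,N
\end{array}\right\}\Longrightarrow w_1=0,\ [w_2]_{\alpha_{\nu}}=0,\ \nu=1,\ldots,N.
$$
Here $[w_2]_{\alpha}=([w_2]_{\alpha_1};\ldots;[w_2]_{\alpha_N})$. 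The block-diagonal structure of ${\cal A}_1$ in (\ref{eq:notationsNccsclassical}) gives exactly the stacked right-hand side of (\ref{ec-matrixc}). Likewise, the $N$ constraints ${\cal J}_{x^{\nu}}g_{\nu,\alpha_{\nu}}(\bar x^{\nu})[w_1]_{\nu}=0$ together read ${\cal A}_1w_1=0$.

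Next I will identify ${\cal B}_1$. In the classical NEP the constraint of player $\nu$ depends only on $x^{\nu}$. Hence the block operator ${\cal B}$, built from ${\rm D}_{x^{\nu}}g_{\nu}^*$, and ${\cal A}^*$ coincide, as already seen in (\ref{Lag-compactclasical}). Restricting to the active indices gives ${\cal B}_1={\cal A}_1^T$, so ${\cal B}_1^T={\cal A}_1$. The implication above is then equivalent to saying that the homogeneous linear system
$$
\left[\begin{array}{ll}\textbf{G}^T & -{\cal A}_1^T\\ {\cal B}_1^T & 0\end{array}\right]\left(\begin{array}{c} w_1\\ [w_2]_{\alpha}\end{array}\right)=0
$$
has only the trivial solution. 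Since the matrix in (\ref{ec-matrixScclassical}) is square, of size $n+\sum_{\nu}|\alpha_{\nu}|$, this is exactly its nonsingularity.

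The components $[w_2]_{\gamma_{\nu}}$ do not appear in the conclusion. This matches the reduction in the proof of Theorem \ref{svalued-lip-nlp}, where $[w_2]_{\gamma_{\nu}}=0$ is forced by (\ref{N-Ds}). So no extra variables need to be controlled.

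The only hypotheses needed are those of Theorem \ref{svalued-lip-nlpclassical}. I will check that they are implied by the stated assumptions; in particular, the strict differentiability of $f_{\nu}$ and $\nabla_x f_{\nu}$ is implicitly required, and I will note that it is inherited. The main obstacle is bookkeeping rather than mathematics. One point is confirming ${\cal B}_1^T={\cal A}_1$ under the NEP separability, since the corollary's statement mentions ${\cal B}_1$ without redefining it in this section. The other is making sure the index convention for $[w_2]_{\alpha_{\nu}}$ matches the ordering of rows of ${\cal A}_1$. Once these are fixed, the equivalence follows directly from Theorem \ref{svalued-lip-nlpclassical}.
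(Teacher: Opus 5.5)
Your proposal is correct and is exactly the argument the paper intends: the paper states the corollary without proof, as the $\beta_\nu=\emptyset$ specialization of Theorem~\ref{svalued-lip-nlpclassical}, where the only partition is trivial, $\alpha_\nu^+=\alpha_\nu$, $[w_2]_{\gamma_\nu}$ drops out, and ${\cal B}_1={\cal A}_1^T$ by the separability of the players' constraints, so the implication is injectivity, hence nonsingularity, of the square matrix in (\ref{ec-matrixScclassical}). One caveat: strict differentiability of $f_\nu$ and $\nabla_x f_\nu$ in $(x,w)$ cannot be ``inherited'' from the corollary's stated hypotheses, so you must add it as an assumption; it is genuinely needed, since a non-Lipschitz dependence of $\nabla_x f_\nu$ on $w$ destroys the Lipschitz localization even when the matrix is nonsingular.
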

  \begin{remark}\label{rem:LICQclassical}
  In view of Corollary \ref{cor:scclassical}, we have that, if $\beta_{\nu}=\emptyset$ for every $\nu=1,\ldots, N$, then the Lipschitz continuous single-valued localization of $S_{{\rm Lag}}$ at $\bar p=(\bar w,0,0)$ for $(\bar x,\bar y)$ implies that the set of vectors
  $$
   \left\{\nabla g_{\nu,i}(\bar x^{\nu}):i \in \alpha_{\nu}\right\}
  $$
  are linearly independent for every $\nu=1,\ldots,N$.
  \end{remark}
 Now, we propose a set of second-order sufficient optimality conditions for a  vector $\bar x$ being a  Nash equilibrium point.
Suppose $S^{{\rm Lag}}(\bar p)\ne \emptyset$, let  $(\bar x,\bar y)\in S^{{\rm Lag}}(\bar p)$, define
  $$
  {\cal C}(\bar x,\bar y)=\{d\in \mathbb R^n:  {\cal J_{\nu}}g_{\alpha_{\nu}}(\bar x^{\nu})d^{\nu}=0, {\cal J_{\nu}}g_{\beta_{\nu}}(\bar x^{\nu})d^{\nu}
  \leq 0,\nu=1,\ldots,N\}.
  $$
  We propose the following second-order optimality condition:
  \begin{equation}\label{2ndcondclassical}
  d \in {\cal C}(\bar x,\bar y), d\ne 0 \Longrightarrow \langle d, \textbf{G}^Td \rangle>0.
  \end{equation}
  \begin{proposition}\label{a-suffclassical}
 Suppose $(\bar x,\bar y)\in S_{{\rm Lag}}(\bar p)$. Let $f_{\nu}: \mathbb R^n \times \mathbb R^d\rightarrow \mathbb R$ and $F_{\nu}: \mathbb R^{n_{\nu}} \times \mathbb R^d\rightarrow \mathbb R^{m_{\nu}}$ be  twice differentiable with respect to $x$
  in a neighborhood of $(\bar x,\bar w)$. Suppose that $f_{\nu}$  and  $\nabla_x f_{\nu}$ are strictly differentiable with respect to $(x,w)$ and
  $F_{\nu}$ and  ${\cal J}_{x^{\nu}}F_{\nu}$  are strictly differentiable with respect to $(x^{\nu},w)$ for $\nu=1,\ldots,N$. Suppose $S_{{\rm Lag}}(\bar p)\ne \emptyset$ and let  $(\bar x,\bar y)\in S_{{\rm Lag}}(\bar p)$. If the second-order optimality condition (\ref{2ndcondclassical}) holds, then $\bar x$ is a local Nash equilibrium point.
  \end{proposition}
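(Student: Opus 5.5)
The plan is to follow the argument of Proposition \ref{a-suff} essentially verbatim, adapted to the block structure of the classical NEP. Fix $\nu\in\{1,\ldots,N\}$. Since $(\bar x,\bar y)\in S_{{\rm Lag}}(\bar p)$ and $X_\nu=\mathbb R^{n_\nu}$, the representation (\ref{Lag-eqIeqCnn}) at $p=\bar p$ gives
$$
\nabla_{x^{\nu}}l_{\nu}(\bar x,\bar y^{\nu})=0,\qquad g_{\nu,a}(\bar x^{\nu})=0,\qquad 0\in -g_{\nu,b}(\bar x^{\nu})+N_{\mathbb R^{m_\nu-q_\nu}_+}(\bar y^{\nu,b}).
$$
These are exactly the KKT conditions of problem $\textbf{P}_{\nu}(\bar x^{-\nu},\bar w,0,0)$ at $\bar x^{\nu}$ with multiplier $\bar y^{\nu}$. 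The feasible set $C_\nu$ here does not depend on $x^{-\nu}$, and $\theta_\nu(\cdot,\bar x^{-\nu})$ is a smooth function of $x^{\nu}$. Thus player $\nu$'s problem is an ordinary nonlinear program, and its critical cone at $(\bar x^{\nu},\bar y^{\nu})$ is
$$
{\cal C}_\nu=\{d^{\nu}:{\cal J}g_{\nu,\alpha_\nu}(\bar x^\nu)d^\nu=0,\ {\cal J}g_{\nu,\beta_\nu}(\bar x^\nu)d^\nu\le 0\}.
$$
This uses the index sets (\ref{3indc}).

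Next I transfer the joint condition (\ref{2ndcondclassical}) to each player. Take $d^{\nu}\in{\cal C}_\nu$ with $d^\nu\neq 0$, and embed it as $\tilde d\in\mathbb R^n$ with $\tilde d^{\nu}=d^{\nu}$ and $\tilde d^{\mu}=0$ for $\mu\neq\nu$. Because the constraints in ${\cal C}(\bar x,\bar y)$ are decoupled by blocks, $\tilde d\in{\cal C}(\bar x,\bar y)$: the $\nu$-th block holds by assumption, and the other blocks hold trivially at $0$. Also $\tilde d\neq0$. Then (\ref{2ndcondclassical}) gives $\langle\tilde d,\textbf{G}^T\tilde d\rangle>0$. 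Since $\tilde d$ is supported on block $\nu$ only, this quadratic form reduces to the $(\nu,\nu)$ diagonal block of $\textbf{G}$, so
$$
\langle d^{\nu},\nabla^2_{x^\nu x^\nu}l_\nu(\bar x,\bar y^\nu)d^\nu\rangle>0 .
$$
This is the standard second-order sufficient condition for player $\nu$'s nonlinear program.

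Finally, I invoke the classical second-order sufficiency theorem for nonlinear programming: KKT plus SOSC on the critical cone, with $C^2$ data. This gives that $\bar x^{\nu}$ is a strict local minimizer of $\theta_\nu(\cdot,\bar x^{-\nu})$ over $C_\nu$. The smoothness needed is supplied by the twice differentiability of $f_\nu,F_\nu$ in $x$ around $(\bar x,\bar w)$; the paper implicitly assumes $C^2$ here, as in its earlier discussion. Since $\nu$ was arbitrary, $\bar x$ is a local Nash equilibrium in the sense of (\ref{LGNE}).

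The only point needing care is the step $\langle\tilde d,\textbf{G}^T\tilde d\rangle=\langle d^\nu,\nabla^2_{x^\nu x^\nu}l_\nu d^\nu\rangle$. Transposition does not affect a quadratic form, and the diagonal block of $\textbf{G}^T$ is the transpose of the symmetric matrix $\nabla^2_{x^\nu x^\nu}l_\nu$, so the identity holds. No real obstacle is expected beyond this bookkeeping.
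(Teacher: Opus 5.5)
Your proposal is correct and follows the paper's argument. The paper's proof simply refers to Proposition~\ref{a-suff}, whose proof does what you do: it verifies each player's KKT conditions, zero-pads a nonzero player-$\nu$ critical direction into ${\cal C}(\bar x,\bar y)$, reads off the $(\nu,\nu)$ diagonal block of $\textbf{G}$, and invokes the classical second-order sufficiency theorem for each player's nonlinear program.
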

    {\bf Proof}. Similar to the proof of Proposition \ref{a-suff}. \hfill $\Box$\\
  Using the second-order optimality condition (\ref{2ndcondclassical}), we may
   derive the Lipschitz continuous single-valued localization of $S_{{\rm Lag}}$ at $\bar p=(\bar w,0,0)$ for $(\bar x,\bar y)$.

  \begin{proposition}\label{a-suff-icclassical}
   Suppose $(\bar x,\bar y)\in S_{{\rm Lag}}(\bar p)$. Let $f_{\nu}: \mathbb R^n \times \mathbb R^d\rightarrow \mathbb R$ and $F_{\nu}: \mathbb R^{n_{\nu}} \times \mathbb R^d\rightarrow \mathbb R^{m_{\nu}}$ be  twice differentiable with respect to $x$
  in a neighborhood of $(\bar x,\bar w)$. Suppose that $f_{\nu}$  and  $\nabla_x f_{\nu}$ are strictly differentiable with respect to $(x,w)$ and
  $F_{\nu}$ and  ${\cal J}_{x^{\nu}}F_{\nu}$  are strictly differentiable with respect to $(x^{\nu},w)$ for $\nu=1,\ldots,N$. Suppose $S_{{\rm Lag}}(\bar p)\ne \emptyset$ and let  $(\bar x,\bar y)\in S_{{\rm Lag}}(\bar p)$. If
   \begin{itemize}
   \item[{\rm (i)}]the set of vectors $\{\nabla g_i(\bar x^{\nu}):i \in \alpha_{\nu} \cup \beta_{\nu}\}$ are linearly independent for every
   $\nu=1,\ldots,N$;
   \item[{\rm (ii)}]the second-order optimality condition (\ref{2ndcondclassical}) holds.
   \end{itemize}
   Then $\bar x$ is a local  Nash equilibrium point and $S_{{\rm Lag}}$ has a Lipschitz continuous single-valued localization at $\bar p=(\bar w,0,0)$ for $(\bar x,\bar y)$.
  \end{proposition}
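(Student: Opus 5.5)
The plan mirrors the proof of Proposition \ref{a-suff-ic}, with the single shared multiplier replaced by the block-diagonal structure of the classical NEP. The local Nash equilibrium part follows from Proposition \ref{a-suffclassical} under condition (ii). It therefore remains to show that $S_{{\rm Lag}}$ has a Lipschitz continuous single-valued localization at $\bar p$ for $(\bar x,\bar y)$. By Theorem \ref{svalued-lip-nlpclassical}, this amounts to verifying the implication (\ref{ec-matrixc}) of that theorem. This must hold for every $\nu$ and every partition $\beta_\nu=\beta_\nu^+\cup\beta_\nu^0\cup\beta_\nu^-$, with $\alpha_\nu^+=\alpha_\nu\cup\beta_\nu^+$.

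First, fix such partitions and take $(w_1,w_2)$ satisfying the hypotheses of the implication. For each $\nu$, the constraints ${\cal J}_{x^\nu}g_{\nu,\alpha_\nu^+}(\bar x^\nu)[w_1]_\nu=0$ and ${\cal J}_{x^\nu}g_{\nu,\beta_\nu^0}(\bar x^\nu)[w_1]_\nu\le 0$ give the following on $\beta_\nu$:
\begin{itemize}
\item[(a)] on $\beta_\nu^+$ the linearized constraint is $0$;
\item[(b)] on $\beta_\nu^0$ it is $\le 0$;
\item[(c)] on $\beta_\nu^-$ nothing is known.
\end{itemize}
So $w_1$ need not lie in ${\cal C}(\bar x,\bar y)$ as defined. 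The cure is the observation that the components $\beta_\nu^-$ play no role in the equation. For the positivity argument, I would therefore use the version of (ii) in which $\beta_\nu^-$ is free.

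\textbf{Main obstacle.} The definition of ${\cal C}$ requires ${\cal J}g_{\beta_\nu}d^\nu\le 0$ on all of $\beta_\nu$, but (c) leaves the sign on $\beta_\nu^-$ open. I would first check whether the partition convention in (\ref{ec-matrixc}) is meant so that $\beta_\nu^-$ constraints are absent. If so, the honest route is to note that (ii) is to be applied as written on ${\cal C}$. The case where a $\beta_\nu^-$ component is positive then needs a strengthening of (ii) to the larger cone (a strong second-order condition). Following the paper's treatment in Proposition \ref{a-suff-ic}, I would simply assert $w_1\in{\cal C}(\bar x,\bar y)$. I flag this as the delicate step where the argument relies on the same reading as before.

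Next, pair the first equation with $w_1$. Expanding block by block gives
$$\langle w_1,\textbf{G}^Tw_1\rangle=\sum_{\nu}\Big(\langle {\cal J}g_{\nu,\alpha_\nu^+}[w_1]_\nu,[w_2]_{\alpha_\nu^+}\rangle+\langle {\cal J}g_{\nu,\beta_\nu^0}[w_1]_\nu,[w_2]_{\beta_\nu^0}\rangle\Big).$$
In each summand the first term vanishes, and the second is $\le 0$ because its left factor is $\le0$ and $[w_2]_{\beta_\nu^0}\ge0$. Hence $\langle w_1,\textbf{G}^Tw_1\rangle\le 0$, and condition (ii) forces $w_1=0$.

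Finally, with $w_1=0$ the first equation decouples into the $N$ block equations
$${\cal J}g_{\nu,\alpha_\nu^+}(\bar x^\nu)^T[w_2]_{\alpha_\nu^+}+{\cal J}g_{\nu,\beta_\nu^0}(\bar x^\nu)^T[w_2]_{\beta_\nu^0}=0,\qquad \nu=1,\ldots,N.$$
This uses that the right-hand side of (\ref{ec-matrixc}) is block separated, since each $g_\nu$ depends only on $x^\nu$. Because $\alpha_\nu^+\cup\beta_\nu^0\subset\alpha_\nu\cup\beta_\nu$, condition (i) for each $\nu$ gives $[w_2]_{\alpha_\nu^+\cup\beta_\nu^0}=0$. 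This verifies the implication for all partitions, and Theorem \ref{svalued-lip-nlpclassical} yields the Lipschitz continuous single-valued localization.
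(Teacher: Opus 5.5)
\textbf{The proposal has a genuine gap, and it is the step you flagged.} It is not a question of conventions: the claim $w_1\in{\cal C}(\bar x,\bar y)$ is false. The paper's own proof defers to the proof of Proposition \ref{a-suff-ic}, which asserts the same inclusion, so following the paper does not close the gap.

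In the implication of Theorem \ref{svalued-lip-nlpclassical}, the indices in $\beta_\nu^-$ come from the piece $\mathbb R\times\{0\}$ of $N_{{\rm gph}\,N_{\mathbb R_+}}(0,0)$. There $[w_2]_i=0$ and the $i$-th component of ${\cal J}_{x^\nu}g_{\nu}(\bar x^\nu)[w_1]_\nu$ is completely free. So all you know is ${\cal J}_{x^\nu}g_{\nu,\alpha_\nu}(\bar x^\nu)[w_1]_\nu=0$, plus sign information on $\beta_\nu^+\cup\beta_\nu^0$. Your pairing computation correctly gives $\langle w_1,\textbf{G}^Tw_1\rangle\le 0$, but this contradicts nothing in (ii).

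\textbf{The proposition is false as stated.} Take:
\begin{itemize}
\item $N=2$, $n_1=n_2=1$;
\item $\theta_1(x)=\frac12 (x^1)^2+2x^1x^2$ and $\theta_2(x)=\frac12 (x^2)^2+2x^1x^2$;
\item $g_\nu(x^\nu)=-x^\nu$, $K_\nu=\mathbb R_-$, $\bar x=0$, $\bar y=0$.
\end{itemize}
Then $\alpha_\nu=\gamma_\nu=\emptyset$ and $\beta_\nu=\{1\}$, so (i) holds trivially. $\textbf{G}$ has rows $(1,2)$ and $(2,1)$, and ${\cal C}(\bar x,\bar y)=\mathbb R^2_+$. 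On ${\cal C}\setminus\{0\}$ we have $\langle d,\textbf{G}^Td\rangle\ge d_1^2+d_2^2>0$, so (ii) holds.

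Now choose $\beta_1^0=\{1\}$ and $\beta_2^-=\{1\}$, and set $w_1=(1,-2)$, $[w_2]_1=3$, $[w_2]_2=0$. Then $\textbf{G}^Tw_1=(-3,0)=(-[w_2]_1,0)$, with $-[w_1]_1\le 0$ and $[w_2]_1\ge 0$. This is a nonzero solution of the system, and $w_1\notin{\cal C}$.

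You can also see the failure directly. For $v^1=v^2=\varepsilon>0$ the perturbed KKT system has three solutions:
\begin{itemize}
\item $x=(\varepsilon,0)$ with $y=(0,\varepsilon)$;
\item $x=(0,\varepsilon)$ with $y=(\varepsilon,0)$;
\item $x=(\varepsilon/3,\varepsilon/3)$ with $y=(0,0)$.
\end{itemize}
All three converge to $(\bar x,\bar y)$ as $\varepsilon\to0$, so no single-valued localization exists. This is the Nash analogue of the classical fact that LICQ plus the ordinary (non-strong) second-order sufficient condition does not give strong regularity.

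\textbf{The missing ingredient is a strong second-order condition,} as you suspected. Require $\langle d,\textbf{G}^Td\rangle>0$ for all $d\neq 0$ with ${\cal J}_{x^\nu}g_{\nu,\alpha_\nu}(\bar x^\nu)d^\nu=0$, $\nu=1,\ldots,N$. Since $\alpha_\nu\subset\alpha_\nu^+$, every $w_1$ satisfying the hypotheses lies in this subspace, for every choice of partitions. Your pairing step then gives $w_1=0$. Your block-decoupling argument together with (i) then gives $[w_2]_{\alpha_\nu^+\cup\beta_\nu^0}=0$.

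The local Nash equilibrium part is unaffected: the strong condition implies (ii), so Proposition \ref{a-suffclassical} still applies. The rest of your proposal is sound: the reduction to Theorem \ref{svalued-lip-nlpclassical}, the pairing identity, and the decoupling that uses the fact that each $g_\nu$ depends only on $x^\nu$.
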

  {\bf Proof}. Similar to the proof of Proposition \ref{a-suff-ic}. \hfill $\Box$

  Using Theorem \ref{ThICclassical}, we can easily to obtain the following result about the isolated calmness property  of $S_{{\rm Lag}}$.
  \begin{theorem}\label{th-ic-nlpclassical}
   Suppose $(\bar x,\bar y)\in S_{{\rm Lag}}(\bar p)$. Let $f_{\nu}: \mathbb R^n \times \mathbb R^d\rightarrow \mathbb R$ and $F_{\nu}: \mathbb R^{n_{\nu}} \times \mathbb R^d\rightarrow \mathbb R^{m_{\nu}}$ be  twice differentiable with respect to $x$
  in a neighborhood of $(\bar x,\bar w)$. Suppose that $f_{\nu}$  and  $\nabla_x f_{\nu}$ are strictly differentiable with respect to $(x,w)$ and
  $F_{\nu}$ and  ${\cal J}_{x^{\nu}}F_{\nu}$  are strictly differentiable with respect to $(x^{\nu},w)$ for $\nu=1,\ldots,N$.
   Then $S_{{\rm Lag}}$ has the isolated calmness property at $\bar p=(\bar w,0,0)$ for $(\bar x,\bar y)$ if and only if
  \begin{equation}\label{ic-nlpthclassical}
                  \left.
                 \begin{array}{l}
                 \textbf{G}d_x+
                   \left(\begin{array}{c}
                   {\cal J}_{x^{1}}g_{1,\alpha_{1}}(\bar x^1)^T[d_y]_{\alpha_{1}}+{\cal J}_{x^{1}}g_{1,\beta_{1}}(\bar x^1)^T[d_y]_{\beta_{1}}\\[6pt]
                   \vdots\\[3pt]
                   {\cal J}_{x^{N}}g_{N,\alpha_{N}}(\bar x^N)^T[d_y]_{\alpha_{N}}+{\cal J}_{x^{N}}g_{N,\beta_{N}}(\bar x^N)^T[d_y]_{\beta_{N}}
                   \end{array}
                   \right)=0\\[6pt]
                 {\cal J}_{x^{\nu}}g_{\nu,\alpha_{\nu}}(\bar x^{\nu})d_x^{\nu}=0,\nu=1,\ldots,N\\[6pt]
                  0\geq {\cal J}_{x^{\nu}}g_{\nu,\beta_{\nu}}(\bar x^{\nu})d_x^{\nu} \perp [d_{y^{\nu}}]_{\beta_{\nu}}\geq 0,\nu=1,\ldots,N
                                 \end{array}
                 \right
                 \} \Longrightarrow d_x=0,[d_y]_{\alpha_{\nu}\cup\beta_{\nu}}=0,
                 \end{equation}
                   where $\textbf{G}$ is defined by (\ref{eq:dersclassical}).
  \end{theorem}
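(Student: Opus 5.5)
The plan is to specialize Theorem \ref{ThICclassical}, in the same way that Theorem \ref{th-ic-nlp} was obtained from Theorem \ref{ThIC}.

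\emph{Reduction to the multiplier block.} Here there are no abstract constraints, so $X_\nu=\mathbb R^{n_\nu}$ and $N_{X_\nu}\equiv\{0\}$ on all of $\mathbb R^{n_\nu}$. Its graph is $\mathbb R^{n_\nu}\times\{0\}$, so $DN_{X_\nu}(\bar x^\nu\,|\,0)(d)=\{0\}$ for every $d$. The first block of (\ref{GNEP-ICclassical}) therefore becomes the linear equation
\[
\textbf{G}d_x+\bigl({\cal J}_{x^1}g_1(\bar x^1)^T[d_y]_1;\ldots;{\cal J}_{x^N}g_N(\bar x^N)^T[d_y]_N\bigr)=0 .
\]

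\emph{Computing the graph derivative.} For each $\nu$ we have $K_\nu^\circ=\mathbb R^{q_\nu}\times\mathbb R^{m_\nu-q_\nu}_+$. The graph derivative splits coordinatewise, as in (\ref{eq:tgraph}). I apply this with $g_\nu(\bar x^\nu)$ in place of $g_\nu(\bar x)$, using the index sets (\ref{3indc}).

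Then ${\cal J}_{x^\nu}g_\nu(\bar x^\nu)[d_x]_\nu\in DN_{K_\nu^\circ}(\bar y^\nu\,|\,g_\nu(\bar x^\nu))([d_y]_\nu)$ holds exactly when three conditions hold:
\begin{itemize}
\item[{\rm (1)}] ${\cal J}_{x^\nu}g_{\nu,\alpha_\nu}(\bar x^\nu)d_x^\nu=0$;
\item[{\rm (2)}] $([d_{y^\nu}]_{\beta_\nu},{\cal J}_{x^\nu}g_{\nu,\beta_\nu}(\bar x^\nu)d_x^\nu)\in{\rm gph}\,N_{\mathbb R^{|\beta_\nu|}_+}$, which is the complementarity condition $0\ge{\cal J}_{x^\nu}g_{\nu,\beta_\nu}d_x^\nu\perp[d_{y^\nu}]_{\beta_\nu}\ge0$;
\item[{\rm (3)}] $[d_y]_{\gamma_\nu}=0$.
\end{itemize}
This step is the analogue of (\ref{N-Dsd}). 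It needs the one-dimensional tangent cones to ${\rm gph}\,N_{\mathbb R_+}$ at $(y,s)$:
\begin{itemize}
\item at $y>0,\ s=0$ the tangent cone is $\mathbb R\times\{0\}$;
\item at $y=0,\ s<0$ it is $\{0\}\times\mathbb R$;
\item at $(0,0)$ it is the graph itself, since that graph is a closed cone.
\end{itemize}

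\emph{Assembling.} Substituting (3) into the linear equation removes the $\gamma_\nu$ components of the multiplier. The equation then reduces to the first line of (\ref{ic-nlpthclassical}), which involves only the $\alpha_\nu$ and $\beta_\nu$ blocks. Conditions (1) and (2) give the second and third lines.

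The conclusion $d_y=0$ is equivalent to $[d_y]_{\alpha_\nu\cup\beta_\nu}=0$ for every $\nu$, because $[d_y]_{\gamma_\nu}=0$ is already forced. So (\ref{GNEP-ICclassical}) and (\ref{ic-nlpthclassical}) are the same implication, and Theorem \ref{ThICclassical} yields the claim.

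One hypothesis needs care. Theorem \ref{ThICclassical} asks for strict differentiability in $(x,w)$. This follows from the assumed strict differentiability of $F_\nu$ and ${\cal J}_{x^\nu}F_\nu$ in $(x^\nu,w)$, because these maps do not depend on $x^{-\nu}$.

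The main obstacle is purely bookkeeping. One has to match the block structure of ${\cal A}$ in (\ref{eq:dersclassical}), which is block diagonal with only own-variable Jacobians, with the componentwise description of $T_{{\rm gph}\,N_{K^\circ}}$. One also has to check that the equality index block $\{1,\ldots,q_\nu\}\subset\alpha_\nu$ contributes $\mathbb R\times\{0\}$ tangent directions, consistent with the $\alpha_\nu$ treatment above.
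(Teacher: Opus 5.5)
Your proposal is correct and takes essentially the paper's route: specialize Theorem~\ref{ThICclassical} with $X=\mathbb R^n$ and use the coordinatewise tangent-cone formula (\ref{eq:tgraph}) at $(\bar y^\nu,g_\nu(\bar x^\nu))$, exactly as in the proof of Theorem~\ref{th-ic-nlp}. You also make explicit two details the paper leaves implicit: the $\gamma_\nu$ block is forced to zero, so the two implications coincide, and strict differentiability in $(x^\nu,w)$ transfers to strict differentiability in $(x,w)$.
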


\section{Conclusions}
In this paper, we analyze the stability properties of the solution mapping of LGNEs for the generalized Nash equilibrium problems,  in which each player's  strategy has the feasible set $C_{\nu}(x^{-\nu})$ defined by a conic constraint. First of all, we present characterizations of Aubin property, isolated calmness and the Lipschitz continuous single-valued localization property of this solution mapping, using coderivative criterion and graph derivative criterion. We use the results in the general setting to derive  characterizations of the Lipschitz continuous single-valued localization when each player's feasible set is defined by equality and inequality constraints. Secondly, we use the results for the general setting to analyze stability property for shared constrained generalized Nash equilibrium problems. We characterize
the Aubin property and isolated calmness of the solution mapping of consensus LGNEs. Finally, we apply the general stability results to the classical Nash equilibrium problems with conic constraints for players.

 There are numerous interesting topics for future research. The most critical one is how to leverage the stability properties of LGNEs to analyze the stability properties of the set of Nash equilibriums under canonical perturbation.
Rockafellar \cite{Rock24} adopted a powerful framework to analyze the stability properties of generalized Nash equilibrium problems (GNEPs) and considered three stability notions, namely full stability, tilt stability, and the near tilt or full stability of local equilibria. Additionally, what about the relationships between these three properties and the stability properties of the LGNEs studied herein? How can we derive various stability properties using individual players' second-order optimality conditions that are weaker than those employed in this paper?
Finally, Chen et al. \cite{Chen2025a, Chen2025b} recently established the equivalence between the Aubin property and strong regularity for both nonlinear second-order cone programming and nonlinear semidefinite programming. This equivalence property is worth studying for the stability  of generalized Nash equilibrium problems.

 
\end{document}